\DeclareMathOperator{\Pic}{Pic}
\DeclareMathOperator{\spec}{Spec}
\DeclareMathOperator{\SCH}{Sch}
\DeclareMathOperator{\rk}{rk}
\DeclareMathOperator{\Supp}{{Supp}}
\numberwithin{equation}{section}
\newcommand{\M}{\mathcal M}
\newcommand{\I}{\mathcal I}
\newcommand{\F}{\mathcal F}
\newcommand{\Z}{\mathbb Z}
\newcommand{\E}{\mathcal E}
\newcommand{\ov}{\overline}
\newcommand{\un}{\underline}
\renewcommand{\O}{\mathcal O}
\newtheorem{thm}{Theorem}[section]
\newtheorem{prop}[thm]{Proposition}
\newtheorem{lem}[thm]{Lemma}
\newtheorem{question}[thm]{Question}
\newtheorem{ex}[thm]{Example}
\newtheorem{cor}[thm]{Corollary}
\newtheorem{fact}[thm]{Fact}
\newtheorem{convention}[thm]{}
\newtheorem{theoremalpha}{Theorem}
\theoremstyle{definition}
\newtheorem{defi}[thm]{Definition}
\theoremstyle{remark}
\newtheorem{rem}[thm]{Remark}
\def\Y{\mathcal Y}
\newcommand{\mgnb}{\overline{\mathcal M}_{g,n}}
\newcommand{\Mg}{\overline{\mathcal M}_{g}}
\newcommand{\MgP}{\overline{\mathcal M}_{g,A}}
\newcommand{\MgPsm}{\mathcal M_{g,A}}
\newcommand{\CgP}{\overline{\mathcal C}_{g,A}}
\newcommand{\CgPx}{\overline{\mathcal C}_{g,A\cup\{x\}}}
\newcommand{\MgPx}{\overline{\mathcal M}_{g,A\cup \{ x \}}}
\newcommand{\Di}{\mathcal D_{irr}}
\newcommand{\Da}{\mathcal D_{b,B}}
\newcommand{\MA}{\mathfrak M_A}
\newcommand{\CA}{{\mathfrak C}_{A}}
\newcommand{\JMP}{{\mathfrak J}_{\mathfrak M_A}}
\newcommand{\IM}{{\mathfrak I}}
\newcommand{\Jx}{{\mathfrak J}_{\mathfrak M_{A\cup\{x\}}}}
\newcommand{\JacMP}{{\mathfrak Jac}_{\mathfrak M_A}}
\newcommand{\Jacx}{{\mathfrak Jac}_{\mathfrak M_{A\cup\{x\}}}}
\newcommand{\JgA}{\overline J_{g,A}}
\newcommand{\JgPx}{\ov{\mathcal J}_{g,A\cup \{ x \}}}
\newcommand{\JgP}{\ov{\mathcal J}_{g,A}}
\newcommand{\Jgn}{\ov{\mathcal J}_{g,n}}
\newcommand{\Jac}{\mathcal J_{g,A}}
\newcommand{\Jg}{\mathcal J_{g}}
\newcommand{\UgP}{\ov{\mathcal U}_{g,A}}
\newcommand{\di}{\delta_{irr}}
\newcommand{\da}{\delta_{b,B}}
\newcommand{\daX}{\delta_{b,B}(X)}
\newcommand{\daY}{\delta_{b,B}^Y}
\begin{document}

 \title[Compactifications of the universal Jacobian]{Compactifications of the universal Jacobian over curves with marked points}
\author{Margarida Melo}
\address{Dipartimento di Matematica e Fisica, Universit\`a Roma Tre
Largo San L. Murialdo 1 - 00146 Rome (Italy)}
\address{CMUC and Mathematics Department of the University of Coimbra
Apartado 3008, EC Santa Cruz 3001 - 501 Coimbra (Portugal)}

\begin{abstract}
We construct modular compactifications of the universal Jacobian stack over the moduli stack of reduced curves with marked points depending on stability parameters obtained out of fixing a vector bundle on the universal curve.
When restricted to the locus of stable marked curves, our compactifications are Deligne-Mumford irreducible smooth stacks endowed with projective moduli spaces and, following Esteves approach to the construction of fine compactifications of Jacobians,
they parametrize torsion-free rank-1 simple sheaves satisfying a stability condition with respect to the fixed vector bundle. We also study a number of properties of our compactifications as the existence of forgetful and clutching morphisms and as well of sections from the moduli stack of stable curves with marked points. We conclude by indicating a number of different possible applications for our constructions.
\end{abstract}



\maketitle

\tableofcontents

\section{Introduction}


Let $A$ be a finite set of cardinality $n:=|A|$ and $g$ an  integer such that $2g-2+n>0$. Denote with $\MgPsm$ the moduli stack of smooth curves of genus $g$ endowed with (distinct) markings parametrized by the elements of the set $A$. The universal Jacobian stack of degree $d$ over $\MgPsm$ is the stack $\Jac^d$ parametrizing elements of $\MgPsm$ together with a line bundle of degree $d$ and it comes endowed with a forgetful morphism $\pi:\Jac^d\to \MgPsm$ given by forgetting the line bundle. Both $\MgPsm$ and $\Jac^d$ are smooth and irreducible Deligne-Mumford stacks of dimension  $3g-3+n$ and $4g-3+n$, respectively; however they are not proper.

The principal aim of the present paper is to describe different ways to complete the square
\begin{equation}\label{E:aim}
\xymatrix{
\Jac^d \ar[r]\ar[d]& {\star}\ar[d]^{\pi}\\
\MgPsm \ar[r] & {\MgP}
}
\end{equation}
to be a cartesian diagram, where $\MgP$ is the Deligne-Mumford compactification of $\MgPsm$ by stable pointed curves and $\star$ is a proper Deligne-Mumford stack endowed with a modular description and admitting a proper map $\pi$ onto $\MgP$.

Whereas for $\MgPsm$ the Deligne-Mumford compactification $\MgP$ seems to be, in many ways, a {\em God given} modular compactification, for the universal Jacobian there is no analogous construction. In fact, the problem of compactifying the universal jacobian of a single singular curve is already quite intricate and, for curves with several irreducible components, it has many different solutions, depending typically on a stability parameter one fixes on the curve (see e.g. the introduction of \cite{est1} and the references therein for an account on different compactifications of the Jacobian variety of singular curves). In order to get a compactification of the universal Jacobian over $\MgP$, one has to, moreover, consider a stability parameter that varies {\em continuously} with the curves.

A modular compactification $\overline{\mathcal P}_{g,A}^d$ giving a commutative completion of the square \eqref{E:aim} was described by the author in \cite{melo}. The construction in loc. cit. was obtained via an inductive procedure on the number of points, starting with Caporaso's compactification of $\Jg^d$ over $\Mg$, constructed in \cite{cap} via GIT, and then by setting $\overline{\mathcal P}_{g,A\cup\{x\}}^d$ to be the universal family over $\overline{\mathcal P}_{g,A}^d$.
Even if this was a natural approach to pursue, the resulting compactification has some drawbacks.
On the one hand, $\overline{\mathcal P}_{g,A}^d$ is not, in general, a Deligne-Mumford stack. In fact, it coincides with Caporaso's compactification  for $A=\emptyset$, which is Deligne-Mumford if and only if   $g.c.d.(d-g+1,2g-2)=1$ (the same holds by construction in the general case when $A\neq \emptyset$). On the other hand, as $\overline{\mathcal P}_{g,A}^d$ was constructed as a universal family building up on Caporaso's compactification, it did not allow for different modular descriptions depending upon stability conditions. In particular, $\overline{\mathcal P}_{g,A}^d$ is not endowed with a number of geometrically meaningful properties, as the existence of sections from $\MgP$, clutching morphisms and others, which one could hope to apply e.g. to the computation of {\em tautological classes} in theses spaces as well as their pullbacks to the moduli space of stable curves.

The approach we will follow in the present paper is therefore different and consists roughly on  following Esteves approach in his construction of fine compactifications for families of reduced curves  in \cite{est1} via torsion-free rank-1 \textit{simple} sheaves satisfying a stability condition with respect to a vector bundle in the family. Actually, in the first part of the paper we start by considering the following more general setup. Let $\MA$ be any open substack of the algebraic stack whose sections consist of reduced curves with marked smooth points indexed by the finite set $A$. Consider $\JMP$ to be the ($\mathbb G_m$-rigidification of the) stack whose sections consist of elements of $\MA$ together with a simple rank-1 torsion free sheaf.
 We start by proving the following statement in Proposition \ref{P:algebraicity}.
\begin{theoremalpha}
The stack $\JMP$ is algebraic and its natural forgetful morphism onto $\MA$ is representable and satisfies the existence part of the valuative criterion for properness. Moreover, if $\MA$ is Deligne-Mumford, $\JMP$ is Deligne-Mumford as well.
\end{theoremalpha}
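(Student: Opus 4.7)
My plan is to follow Esteves' approach in \cite{est1}, reducing the four assertions to (i) the representability theorem of Altman--Kleiman for simple sheaves, (ii) the behavior of rigidification with respect to algebraicity and automorphism groups, and (iii) the extension theorem for torsion-free rank-$1$ sheaves over DVRs.

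\textbf{Algebraicity and representability.} I would first analyze the non-rigidified stack $\widetilde{\JMP}$ of pairs $(C,F)$ with $C\in\MA$ and $F$ a simple torsion-free rank-$1$ sheaf on $C$. For any family $\pi\colon X\to S$ of geometrically reduced curves, Altman--Kleiman produce an algebraic space $\operatorname{Simp}_{X/S}$, locally of finite type over $S$, representing the \'etale sheafification of the functor of simple sheaves on the fibers of $\pi$. Applied to a smooth atlas of the universal curve over $\MA$, this simultaneously exhibits $\widetilde{\JMP}$ as algebraic and shows that the forgetful morphism $\widetilde{\JMP}\to \MA$ is relatively an algebraic space. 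Since simple sheaves have automorphism group exactly $\mathbb{G}_m$ acting by scalars, the projection $\widetilde{\JMP}\to \JMP$ is a $\mathbb{G}_m$-gerbe; hence $\JMP$ is itself algebraic, and for any $T\to\MA$ the base change $\JMP\times_{\MA}T$ is identified with $\operatorname{Simp}_{X_T/T}$, giving representability.

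\textbf{Valuative criterion and Deligne--Mumford property.} For the existence part of the valuative criterion, given a DVR $R$ with fraction field $K$, a family $X_R\to\operatorname{Spec}R$ in $\MA$, and a simple torsion-free rank-$1$ sheaf $F_K$ on $X_K$, one must extend $F_K$, after a finite ramified base change if necessary, to a flat family of rank-$1$ torsion-free simple sheaves on $X_R$. This is exactly the content of Esteves' extension theorem for simple sheaves on families of reduced curves \cite{est1}, whose proof uses a Langton-type modification at the central fiber to enforce both torsion-freeness and simpleness. For the last assertion, assume $\MA$ is Deligne--Mumford. An automorphism of a $\JMP$-point $(C,F)$ lifts modulo scalars to an automorphism of $(C,F)$ in $\widetilde{\JMP}$; since $\operatorname{Aut}(F)=\mathbb{G}_m$ by simpleness, after rigidification the automorphism group injects into $\operatorname{Aut}(C)$, which is finite and unramified by the DM hypothesis on $\MA$. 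Therefore $\JMP$ is Deligne--Mumford as well.

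The main obstacle I expect is the valuative criterion: extending a rank-$1$ torsion-free sheaf is easy (one can push to the smooth locus and reflexify, or use the relative Quot scheme), but ensuring that the central fiber of the extension remains \emph{simple}---equivalently, that no non-scalar endomorphism appears across the irreducible components of $X_k$---requires the iterative elementary-transformation argument of Esteves. By contrast, the algebraicity and DM statements are essentially formal consequences of the Altman--Kleiman representability theorem combined with the standard yoga of $\mathbb{G}_m$-rigidifications.
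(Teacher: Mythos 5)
Your proposal follows essentially the same route as the paper: Altman--Kleiman's representability of the (\'etale-sheafified) relative simple-sheaf functor gives representability of $\JMP\to\MA$, hence algebraicity and the Deligne--Mumford statement, while the existence part of the valuative criterion is reduced to the Altman--Kleiman/Esteves extension result for simple torsion-free rank-$1$ sheaves over a DVR. One small correction: the non-rigidified stack $\JacMP$ is \emph{not} relatively an algebraic space over $\MA$ (its objects carry $\mathbb{G}_m$-automorphisms); the Altman--Kleiman space represents the sheafified functor, i.e.\ the rigidification $\JMP$, exactly as your subsequent identification $\JMP\times_{\MA}T\cong\operatorname{Simp}_{X_T/T}$ asserts.
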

In the case when $\MA$ is the stack of curves with planar singularities, we get moreover from Proposition \ref{P:loc-plan} that $\JMP$ is smooth.

Consider now the universal family over $\MA$, which is the stack $\CA$ whose sections consist of sections of $\MA$ each endowed with an extra section, and let $\E$ be any vector bundle over $\CA$ such that $\frac{\deg \E}{\rk \E}\in \mathbb Z$. If possible, consider also a section $\sigma:\MA\to\CA$ (notice that, e.g. in the case when $A=\emptyset$, such section may not exist). Consider the open substack $\JMP^{\E,ss}$ (resp. $\JMP^{\E,s}$, resp. $\JMP^{\E,\sigma}$) of $\JMP$ parametrizing families of pointed curves in $\MA$ together with a rank-1 torsion-free sheaf which is semistable (resp. stable, resp. $\sigma$-quasistable) with respect to $\E$ (see Definition \ref{semiquasistabledef}). Then, from Proposition \ref{P:JacProp}, we have that
\begin{theoremalpha}
\begin{enumerate}[(i)]
\item $\JMP^{\E,ss}$ is universally closed over $\MA$;
\item $\JMP^{\E,s}$ is separated over $\MA$;
\item $\JMP^{\E,\sigma}$ is proper over $\MA$.
\end{enumerate}
\end{theoremalpha}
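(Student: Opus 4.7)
The plan is to deduce all three statements from Theorem~A by verifying the relevant parts of the valuative criterion. Since each of $\JMP^{\E,ss}$, $\JMP^{\E,s}$ and $\JMP^{\E,\sigma}$ is an open substack of $\JMP$, representability over $\MA$ is inherited, and it remains to check the existence and/or uniqueness of limits. Fix a DVR $R$ with fraction field $K$ and residue field $k$, an $R$-point $X\to\spec R$ of $\MA$, and a simple rank-one torsion-free sheaf $\F_K$ on $X_K$ that satisfies the appropriate stability condition with respect to $\E_K$ on the generic fibre.

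For (i), the existence part of the valuative criterion for $\JMP\to\MA$ (Theorem~A) already produces, after a finite base change, a simple rank-one torsion-free extension $\F$ of $\F_K$ on $X$. If $\F|_{X_k}$ fails to be $\E$-semistable, some proper subcurve $Y\subsetneq X_k$ strictly violates the semistability inequality, and following Esteves~\cite{est1} the elementary transformation of $\F$ along $Y$ yields a new simple torsion-free extension whose degrees on subcurves of $X_k$ are strictly closer to the semistable range. Since the possible degrees on subcurves range over a finite set determined by $\E|_{X_k}$, this procedure terminates at a semistable extension, establishing the existence part of the valuative criterion for $\JMP^{\E,ss}$ and hence universal closedness over $\MA$.

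For (ii), suppose $\F,\F'$ are two simple rank-one torsion-free sheaves on $X$, both $\E$-stable on $X_k$ and with $\F_K\simeq\F'_K$. Multiplying the generic isomorphism by a suitable power of a uniformiser yields a nonzero map $\varphi\colon\F\to\F'$ which is an isomorphism on $X_K$, so its image $\G\subseteq\F'$ is a rank-one torsion-free subsheaf coinciding with $\F'$ off $X_k$. Comparing the degrees of $\G|_{X_k}$ and of $\F'|_{X_k}$ on every proper subcurve against the strict stability inequality satisfied by $\F'$, and using simplicity to exclude torsion contributions, forces $\G=\F'$, so $\varphi$ is an isomorphism. This gives uniqueness, i.e. separatedness of $\JMP^{\E,s}\to\MA$.

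For (iii), $\sigma$-quasistability interpolates between stability and semistability, requiring strict inequality for every subcurve containing $\sigma$ and only semistability for the remaining ones, and it singles out a unique representative in each S-equivalence class of semistable sheaves. Starting from the semistable extension produced in (i), a further finite sequence of elementary transformations along maximal subcurves not meeting $\sigma$ that realise equality yields a $\sigma$-quasistable extension; uniqueness is proved exactly as in (ii), using strict stability on subcurves meeting $\sigma$ together with a symmetric use of the semistability inequalities for $\F$ and $\F'$ on the remaining ones. Existence plus uniqueness then yield properness. The main obstacle I anticipate is the combinatorial bookkeeping behind the elementary transformations in (i) and (iii): one has to check that they preserve simplicity of the rank-one torsion-free extension and that the iteration strictly improves a well-chosen finite-valued invariant, so as to guarantee termination. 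Once this is in place, the remaining arguments are fairly direct adaptations of~\cite{est1} to the stacky pointed setting.
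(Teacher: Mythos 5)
Your overall strategy is sound, but it diverges from the paper in an instructive way: the paper does not re-prove the valuative criteria at all. Having established in Theorem A (Proposition \ref{P:algebraicity}) that $\pi\colon\JMP\to\MA$ is representable, it checks universal closedness, separatedness and properness after base change along an arbitrary scheme-valued point $T\to\MA$, where the fibre product is exactly Esteves' relative moduli space $\ov J_f^{E,\star}$ for the induced family $f\colon X\to T$; the three assertions then become precisely \cite[Theorem 32(3)]{est1}, \cite[Proposition 26]{est1} and \cite[Proposition 27 and Theorem 32(4)]{est1}, recalled in the paper as Fact \ref{F:esteves}. What you propose is essentially to redo Esteves' proofs of those results in the stacky setting, which is legitimate but considerably more work, and it buys nothing here since representability already reduces the problem to the scheme case.

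Two genuine gaps remain in your sketch. First, you take for granted that $\JMP^{\E,ss}$, $\JMP^{\E,s}$ and $\JMP^{\E,\sigma}$ are open in $\JMP$ and of finite type over $\MA$; both facts are part of what has to be proved. Openness is obtained in the paper by semicontinuity from Esteves' cohomological characterisation of (semi/quasi)stability, and finite type from the boundedness of the family of semistable sheaves (Remark \ref{R:bounded}); without quasi-compactness the valuative criterion alone does not yield properness in (iii). Second, the step you explicitly defer --- that the elementary transformations in (i) and (iii) preserve simplicity of the rank-one torsion-free extension and strictly improve a finite-valued invariant so that the iteration terminates at a semistable (resp.\ $\sigma$-quasistable) limit --- is the technical core of Esteves' semistable reduction theorem, not mere bookkeeping; a proof that leaves it open has not actually established (i) or the existence half of (iii). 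Your separatedness argument in (ii), comparing degrees of the image of a generically bijective map against the strict stability inequalities, is the standard one and is essentially complete.
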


In the second part of the paper we focus our attention in the case when $\MA:=\MgP$ is the stack of stable curves of given genus $g$ with markings indexed by the set $A$. For any integer $d$, set $\JgP^d$ to be the stack parametrizing families of curves in $\MgP$ together with a torsion-free rank-1 simple sheaf of (relative) degree $d$ on the family.
It follows from Theorem A above that $\JgP^d$ is a smooth and irreducible Deligne-Mumford stack of dimension $4g-3+n$ and that its natural forgetful morphism to $\MgP$ is representable and satisfies the existence part of the valuative criterion for properness (see Proposition \ref{P:UnivJac-DM}). Clutching morphisms and contraction morphisms given by forgetting sections in $\MgP$ induce analogous morphisms in this setup (see Proposition \ref{P:clutch-forget}).

 Consider now a $d$-polarisation $\E$ on $\MgP$, i.e., a vector bundle of rank $r>0$ and degree $r(d-g+1)$ on the universal family $\CgP$ of $\MgP$ (see Definition \ref{D:d-pol}), and a section $\sigma$ of the forgetful morphism $\pi:\CgP\to\MgP$. Let $\JgP^{\E,ss}$, $\JgP^{\E,s}$ and $\JgP^{\E,\sigma}$ be the open substacks of $\JgP$ parametrizing sheaves that are $\E$-semistable, $\E$-stable and $(\E,\sigma)$-quasistable, respectively. Then, from Theorem B above, we deduce that $\JgP^{\E,ss}$, $\JgP^{\E,s}$ and $\JgP^{\E,\sigma}$ are smooth and irreducible Deligne-Mumford stacks of finite type and that $\JgP^{\E,ss}$ is universally closed, $\JgP^{\E,s}$ is separated and that $\JgP^{\E,\sigma}$ is proper (see Proposition \ref{P:univJacProp}). By applying Koll\'ar's semipositivity criterion we show moreover in section \ref{S:coarse} that $\JgP^{\E,\sigma}$ admits a projective coarse moduli space.

Finally, we consider explicit polarisations $\E$ on $\MgP$ and we study a number of properties of the corresponding fine compactifications $\JgP^{\E,ss}$, $\JgP^{\E,s}$ and $\JgP^{\E,\sigma}$. More concretely,
given integer (or, more generally, rational) numbers $s, r, a_i, i\in A$ such that $\frac{s(2g-2)+\sum_{i\in A} a_i}{r}\in \Z$ and $\alpha_{(b,B)}$, where $(b,B)$ ranges through the admissible combinatorial types of boundary divisors of genus $b$ and with set of points $B\subset A$ (see \ref{S:UFPC}), set
\begin{equation}\label{E:vectorpol}
\E=\left (\omega^s(\sum_{i\in A}a_i\sigma_i )\otimes \sum_{(b,B)}\alpha_{(b,B)}\da\right )\oplus \mathcal O_{\CgP}^{r-1}.
\end{equation}
Then, for $d=\frac{s(2g-2)+\sum_{i\in A} a_i}{r}+g-1$, $\E$ is a $d$-polarisation on $\MgP$, i.e., $\E$-semistable sheaves will have, in particular, degree equal to $d$ (see Definition \ref{D:d-pol}). For particular choices of the coefficients we get back the so called ``canonical polarisation'' (see \ref{S:canonical} below), obtained by Caporaso in \cite{cap} in her construction of a compactification of the universal Picard variety over $\ov M_g$ in the case when $A=\emptyset$ and, more generally by Li-Wang when studying GIT-stability of nodal curves in \cite{li}.
We explore quite in detail the stability conditions that vector bundles of this shape induce on the curves and, for suitable choices of rational numbers, we show that they satisfy a number of properties as the compatibility with forgetful and clutching morphisms or the existence of generalized Abel-Jacobi maps from $\MgP$ to $\JgP$.


There is a number of quite interesting applications one might try to get to by using the constructions obtained in the present paper and we finish by indicating some of them.
To start with, the understanding of some geometrical cycle classes in $\JgP$ and their intersection theoretical properties can be applied  to compute cycle classes of interest in $\MgP$. It has been suggested by R. Hain that the existence of a compactification of the universal Jacobian variety admitting a number of specified sections from $\MgP$ can be used to compute an extension of the so-called double ramification cycle to $\MgP$, giving an answer to the so-called Eliashberg problem (see \ref{S:Eliashberg}). R. Hain himself applied this strategy to compute a partial extension to the locus of curves of compact type. Indeed, over these curves, there is no need to compactify the Jacobian as one can work with the generalised Jacobian, which parametrizes line bundles with multidegree  $\underline 0=(0,\dots, 0)$, which is already compact. The computation has been taken further by   Bashar Dudin in \cite{dudin} who, actually by using the existence of $\JgP^{\E,\sigma}$ for a suitable choice of $\E$ and $\sigma$, gave an answer to the above problem in a wider locus (including in particular all tree-like curves). The argument in loc. cit. is currently being worked out in order to further push the computation over the whole $\MgP$.
A formula for a possibly different extension of the double ramification cycle has been conjectured by Pixton and recently proved to be true by work of Janda, Pandharipande, Pixton and Zvonkine in \cite{DRC}. The two extensions coincide, as expected, over the locus of curves of compact type by work of Marcus and Wise in \cite{MW}.
It would be interesting to understand the relation of this extension with the one yield by our construction in a wider locus.

In \cite{GJV}, Goulden, Jackson and Vakil conjectured that there should exist a suitable compactification of the universal Jacobian stack over $\MgP$ together with special cycle classes whose integrals could be used to compute the so-called one-part double Hurwitz numbers, counting coverings of $\mathbb P^1$ with specified ramification over $0$ and total ramification over $\infty$, yielding a generalization of the so-called ELSV formula (see \ref{ELSV}). We intend to investigate if such a formula can be proved over $\JgP^{\E,\sigma}$, for suitable choices of $\E$ and $\sigma$.
This again depends upon the understanding of intersection theoretic properties of different algebro-geometric cycles in our construction.

A yet another possible application was suggested to us by Barbara Fantechi and consists of using the geometry of $\JgP$ in order to set up a GW-invariant theory with target the (non Deligne-Mumford) classifying stack $B\mathbb G_m$. In fact, being a parameter space for curves together with line bundles, one can give a modular interpretation of $\JgP$ as a compactification of the space of maps with target $B\mathbb G_m$ (see \ref{GW}). The theory of Gromov-Witten invariants with target a Deligne-mumford stack was developed in the algebro-geometric setting by Abramovich, Graber and Vistoli in \cite{AGV} and a gauge-theoretical approach to define GW-invariants with target $B\mathbb G_m$ was studied by Frenkel, Telemen and Tolland in \cite{FTT}. However, at least to our knowledge, there is at the moment no construction of algebro-geometric invariants on spaces of maps with target $B\mathbb G_m$. We expect to explore the application of our construction to this problem in a near future.

Finally, notice that the fact that we get universal fine compactified Jacobians can be related to the existence of universal N\'eron models for Jacobians of curves with marked points. Indeed, this has been made clear in the recent work \cite{meloneron}, where the author shows that, for a given polarization $\E$ and section $\sigma:\MA\to \CA$, the universal fine compactified Jacobian stack  $\JMP^{\E,\sigma}\to \MA$ over the moduli stack of pointed reduced curves yields universal N\'eron models in the following sense: given a family of reduced pointed curves $(f:X\to B;\sigma_1,\dots,\sigma_n)$ over a Dedekind scheme $B$ such that $f$ is smooth over a dense open subset $U\subseteq B$ and such that the total space of the family $X$ is regular, then the pullback of $\JMP^{\E,\sigma}\to \MA$ over the moduli map $\mu_f:B\to \MA$ is isomorphic to the N\'eron model of $J(X_U)$ over $B$. This is also related to recent work by D. Holmes and by A. Chiodo in \cite{holmes} and \cite{chiodo}, respectively.
For more details on this application check \cite{meloneron}.

While this paper was being finished, Kass and Pagani announced in \cite{KP} an alternative construction of a compactified universal Jacobian stack over $\MgP$ for degree $d=g-1$ and in the case when $A\neq \emptyset$. Their construction relies on the choice of a stability vector that the authors describe combinatorially over $\MgP^{(0)}$, where $\MgP^{(0)}$ denotes the locus in $\MgP$ parametrizing tree-like curves.
We show in Proposition \ref{P:compare} that, when restricting our compactified universal Jacobians $\JgP^{E,ss}$ to that locus, for suitable choices of coefficients $s, a_i$ and $\alpha_{(b,B)}$, one can  recover all the different compactifications they obtain by varying the stability parameters. The authors in loc. cit. are interested in computing the pullback to $\MgP$ of  ``compactified theta divisors'' and for that they write down wall-crossing formulas for the behaviour of those classes when the stability parameters change. It would be very interesting to investigate if we could describe similar phenomena for our compactifications as the coefficients in \eqref{E:vectorpol} vary, namely by describing a chamber structure along with wall crossing formulas, for other classes of interest.

\subsection*{Outline of the paper}

 In section \ref{S:def-not} we introduce the main characters of this work, namely curves, simple torsion free sheaves on curves, polarisations along with the stability conditions they impose. We also recall some facts concerning Esteves' compactified Jacobians for (families of) reduced curves and we summarise the main properties of the moduli stack of pointed stable curves we will need in the sequel.

 In section \ref{S:simple} we consider the more general situation of the present paper, namely the stack of simple torsion-free rank 1 sheaves over an algebraic stack of marked reduced curves. We start by proving Theorem A, which states the main general properties of this stack. We proceed by considering polarisations for these stacks in order to get separated quotients of the previous stack consisting of semi(stable) sheaves with respect to the polarisation. We then prove a number of properties of these stacks that we summarise in Theorem B.

 In section \ref{S:Mgn}, which is the main part of this paper, we focus in the special case of the moduli stack of stable marked curves. We get strengthenings of Theorems A and B in this case in Propositions \ref{P:UnivJac-DM} and \ref{P:univJacProp}, respectively. We further prove that fine universal compactified Jacobians admit a projective coarse moduli space in \S \ref{S:coarse}.
In \S \ref{S:expol} we consider explicit polarisations of the form \eqref{E:vectorpol} and we show that their associated universal compactified Jacobian stacks satisfy a number of properties. For instance, we show how to modify the polarisations in order to get the existence of forgetful morphisms, we give sufficient conditions for the compatibility with clutching morphisms and we exhibit polarisations for which the associated universal compactified Jacobian stack admits natural sections from $\MgP$.
 We finish by indicating a number of possible future applications of our constructions to a number of open questions  in \S
 \ref{S:applications}.

\section*{Acknowledgements}

I wish to thank Eduardo Esteves for many enlightening explanations on his construction of compactified Jacobians and for sharing with me many valuable ideas and suggestions and Lucia Caporaso for first guiding me into this this question. Thanks are also due to Alberto Lopez and Filippo Viviani for sharing with me many ideas at an earlier stage of this paper and  to Jesse Kass for many valuable comments on an earlier version of this paper. I wish also to thank Bashar Dudin, Barbara Fantechi and Filippo Viviani for many discussions regarding  different applications of these constructions as well as suggestions for future work.

The author was partially supported by the FCT (Portugal) projects \textit{Geometria de espa\c cos de moduli de curvas e variedades abelianas} (EXPL/MAT-GEO/1168/2013) and \textit{Comunidade Portuguesa de Geometria Alg\'ebrica} (PTDC/MAT-GEO/0675/2012) and by a Rita Levi Montalcini Grant from the italian government.

\section{Definitions and notations}\label{S:def-not}

\subsection{Stability conditions for sheaves on curves}

\subsubsection{Curves}

 Let $k$ denote an algebraically closed field (of arbitrary characteristic).

\begin{convention}\label{N:curves}
A \textbf{curve}  is a \emph{reduced} projective scheme over $k$ of pure dimension $1$, which we will assume to be  connected.

We denote by $\gamma(X)$, or simply by $\gamma$ if the curve $X$ is clear from the context, the number of irreducible components of $X$.

Unless otherwise stated, by $genus$ of a curve we will always mean the \emph{arithmetic genus}  $p_a(X)$ of $X$, i.e., a curve $X$ is said to have (arithmetic) genus $g$ if
$$g=p_a(X):=1-\chi(\O_X)=1-h^0(X,\O_X)+h^1(X, \O_X). $$
\end{convention}

\begin{convention} A \textbf{subcurve} $Z$ of a curve $X$ is a closed $k$-scheme $Z \subseteq X$ that is reduced  and of pure dimension $1$.  We say that a subcurve $Z\subseteq X$ is non-trivial if $Z\neq \emptyset, X$.
	
Given two subcurves $Z$ and $W$ of $X$ without common irreducible components, we denote by $Z\cap W$ the $0$-dimensional subscheme of $X$ obtained as the
scheme-theoretic intersection of $Z$ and $W$ and we denote by $|Z\cap W|$ its length.

Given a subcurve $Z\subseteq X$, we denote by $Z^c:=\ov{X\setminus Z}$ the \textbf{complementary subcurve} of $Z$ and we set $k_Z=k_{Z^c}:=|Z\cap Z^c|$.
 \end{convention}

\begin{convention}
A curve $X$ is called \textbf{Gorenstein} if its dualizing sheaf $\omega_X$ is a line bundle.

Given a subcurve $Z\subseteq X$ of a Gorenstein curve, we denote by $w_Z$ the degree of $\omega_X$ in $Z$, i.e., $w_Z:=\deg_Y\omega_X$.

\end{convention}

\begin{convention}
A curve $X$ has \textbf{locally planar singularities at $p\in X$} if  the completion
$\widehat{\mathcal O}_{X,p}$ of the local ring of $X$ at $p$ has embedded dimension two, or equivalently if it can be written
as
$$\widehat{\mathcal O}_{X,p}=k[[x,y]]/(f),$$
for a reduced series $f=f(x,y)\in k[[x,y]]$.
A curve $X$ has locally planar singularities if $X$ has locally planar singularities at every $p\in X$.
It is well known that a curve with locally planar singularities is Gorenstein.
\end{convention}

\begin{convention}\label{N:Jac-gen}
Given a curve $X$, the \textbf{generalized Jacobian} of $X$, denoted by $J(X)$ or by $\Pic^{\un 0}(X)$,
is the algebraic group whose $k$-valued points are line bundles on $X$ of multidegree $\un 0$ (i.e. having
degree $0$ on each irreducible component of $X$) together with the multiplication given by the tensor product.
\end{convention}

\subsubsection{Sheaves}\label{semistabledef}

We start by defining the types of sheaves we will be working with.

\begin{defi}
A coherent sheaf $I$ on a (connected) curve $X$ is said to be:
\begin{enumerate}[(i)]
\item \emph{rank-$1$} if $I$ has generic rank $1$ at every irreducible component of $X$;
\item \emph{torsion-free} if $\Supp(I)=X$ and every non-zero subsheaf $J\subseteq I$ is such that $\dim \Supp(J)=1$;
\item \emph{simple} if ${\rm End}_k(I)=k$.
\end{enumerate}
\end{defi}
\noindent Note that any line bundle on $X$ is a simple rank-$1$ torsion-free sheaf.



\noindent Given a coherent sheaf $I$ on $X$, its degree $\deg(I)$ is defined by $\deg(I):=\chi(I)-\chi(\O_X)$, where $\chi(I)$ (resp. $\chi(\O_X)$) denotes the Euler-Poincar\'e characteristic of $I$
(resp. of the trivial sheaf $\O_X$).

\noindent For a subcurve $Y$ of $X$ and a torsion-free sheaf $I$ on $X$, the restriction $I_{|Y}$ of $I$ to $Y$ is not necessarily a torsion-free sheaf on $Y$; denote by $I_Y$ the maximum torsion-free quotient of $I_{|Y}$. Notice that if $I$ is torsion-free rank-$1$ then $I_Y$ is torsion-free rank-$1$.
We let $\deg_Y (I)$ denote the degree of $I_Y$ on $Y$, that is, $$\deg_Y(I) := \chi(I_Y )-\chi(\O_Y).$$
If $X=C_1\cup\dots\cup C_\gamma$ are the irreducible components of $X$, the multidegree of $I$ is the tuple
$$(\deg_{C_1}I,\dots,\deg_{C_\gamma}I).$$

\subsubsection{Semi-stable sheaves}

Let us now recall the definitions of (semi-)stable sheaves that can be found on \cite{est1}, 1.2 and 1.4.

\begin{defi}\label{D:d-pol}
Let $X$ be a curve of genus $g$. A $d$-\textit{polarisation} on $X$ (or shortly a \textit{polarisation
}) is a vector bundle $E$ on $X$ of rank $r>0$ and degree $r(d-g+1)$.
\end{defi}


\begin{convention}
Let $E$ be a $d$-polarisation on a curve $X$ and let $Y\subseteq X$ be a subcurve of $X$. Set
$$q_Y^E:=\frac{\deg E_{|Y}}r+\frac{w_Y}2.$$
\end{convention}



\begin{defi}\label{semiquasistabledef}
Fix a $d$-polarisation $E$ on a curve $X$ and a smooth point $p\in X$.
A torsion-free rank $1$ sheaf $I$ on $X$ with $\deg(I)=d$ is said to be
\begin{enumerate}[(i)]
\item $E$-semi-stable or semi-stable with respect to $E$ if for every proper subcurve $\emptyset\neq Y\subsetneq X$, we have
\begin{equation}\label{E:stabCond}
\deg_Y(I)\geq q_Y^E-\frac{k_Y}2;
\end{equation}
\item $E$-stable or stable with respect to $E$ if inequality \eqref{E:stabCond}
holds strictly for every proper subcurve $\emptyset\neq Y\subsetneq X$;
\item $(E,p)$-quasistable, or $p$-quasistable with respect to $E$ if it is semi-stable and if strict inequality holds above whenever $p\in Y$.
\end{enumerate}
\end{defi}

It follows immediately from Definition \ref{semiquasistabledef} that the notions of stability and semistability do not depend of the vector bundle $E$, but rather of its multi-slope
$\left(\frac{\deg_{C_1}E}{r}, \dots, \frac{\deg_{C_\gamma}E}{r}\right),$
where $X=C_1\cup\dots\cup C_\gamma$ is the decomposition of $X$ in irreducible components.
Indedd, to give a polarisation on a curve $X$ it suffices to give a tuple of rational numbers $\un q=\{\un q_{C_i}\}$, one for each irreducible component $C_i$ of $X$, such that $|\un q|:=\sum_i \un q_{C_i}\in \Z$.
However, in order to deal with families of curves, which we will do, it is more convenient to deal with the vector bundle rather than with its multislope.



\begin{ex}\label{Ex:can-pol}
For any Gorenstein curve $X$ of genus $g$ and integer $d\in\mathbb Z$, consider the vector bundle
$$E^X_{can}:=\omega_X^{\otimes (d-g+1)}\oplus \mathcal O_X^{\oplus (2g-3)}$$
of degree $(2g-2)(d-g+1)$ and rank $2g-2$.
Then $E^X_{can}$ is a $d$-polarisation on $X$ and given a subcurve $Y\subset X$, we get that
$$q_Y^{E^X_{can}}=\frac{(d-g+1)(w_Y)}{2g-2}+\frac{w_Y}{2}=d\frac{w_Y}{2g-2}.$$
This implies that inequality \eqref{E:stabCond} for $E_{can}$ (at least when $X$ is a nodal curve) reduces to the well-known Gieseker-Caporaso's \textit{basic inequality} (see \cite[3.1]{cap}).
For this reason, we will call $E^X_{can}$  the \textit{canonical polarisation} of degree $d$ on $X$.
\end{ex}

\begin{defi}\label{def-int}
A polarisation $E$ is called \emph{integral} at a subcurve $Y\subseteq X$ if
$q_Z^E-\frac{k_Z}2 \in \Z$ for any connected component $Z$ of $Y$ and of $Y^c$.
A polarisation is called
\emph{general} if it is not integral at any proper subcurve $Y\subset X$.
\end{defi}

The following is Lemma 2.10 in \cite{MRV}.

\begin{fact}\label{L:nondeg}
\noindent
\begin{enumerate}[(i)]
\item If $I$ is stable with respect to a polarisation $E$ on $X$ then $I$ is simple.
\item For a polarisation $E$ on $X$, the following conditions are equivalent
\begin{enumerate}[(a)]
\item $E$ is general.
\item Every rank-1 torsion-free sheaf which is $E$-semistable is also $E$-stable.
\item Every simple rank-1 torsion-free sheaf which is $E$-semistable is also $E$-stable.
\item Every line bundle which is $E$-semistable is also $E$-stable.
\end{enumerate}
\end{enumerate}
\end{fact}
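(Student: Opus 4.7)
My plan is to prove parts (i) and (ii) separately, following the standard arguments of \cite{MRV}.

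For part (i), I would invoke Esteves' structural description of rank-1 torsion-free sheaves (see \cite{est1}): $I$ fails to be simple exactly when there is a decomposition $X=Y\cup Y^c$ into proper subcurves meeting at $k_Y$ nodes, together with rank-1 torsion-free sheaves $J_Y$ on $Y$ and $J_{Y^c}$ on $Y^c$, such that $I\cong \iota_{Y,*}J_Y\oplus \iota_{Y^c,*}J_{Y^c}$. A direct Euler-characteristic computation using the identity $\chi(\mathcal O_Y)+\chi(\mathcal O_{Y^c})-\chi(\mathcal O_X)=k_Y$ then yields $\deg_Y(I)+\deg_{Y^c}(I)=d-k_Y$. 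On the other hand, summing the semistability inequalities \eqref{E:stabCond} applied to $Y$ and $Y^c$ and using the identity $q_Y^E+q_{Y^c}^E=d$ (which in turn follows from $\deg E=r(d-g+1)$ and $w_Y+w_{Y^c}=2g-2$) gives $\deg_Y(I)+\deg_{Y^c}(I)\ge d-k_Y$. Comparing the two forces equality in \eqref{E:stabCond} at $Y$, contradicting the strict inequality required for $E$-stability.

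For part (ii), the implications (b)$\Rightarrow$(c)$\Rightarrow$(d) are immediate since every line bundle is a simple rank-1 torsion-free sheaf. For (a)$\Rightarrow$(b), the key observation is that whenever $q_Z^E-k_Z/2\notin\Z$ for a connected subcurve $Z$, the semistability inequality at $Z$ is automatically strict, since $\deg_Z(I)\in\Z$. Additivity of the semistability data over connected components, namely $\deg_Y(I)=\sum_i \deg_{Z_i}(I)$, $q_Y^E=\sum_i q_{Z_i}^E$ and $k_Y=\sum_i k_{Z_i}$ for the decomposition $Y=\bigsqcup_i Z_i$, then propagates strictness from any single component to the enclosing subcurve. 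A case analysis applying generality to all proper subcurves (and using the identity $q_Y^E+q_{Y^c}^E=d$ to switch between $Y$ and $Y^c$) yields strict inequality at every proper $Y$.

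The main technical implication is (d)$\Rightarrow$(a), which I would prove by contrapositive: given non-generality, pick a proper $Y$ with $q_Z^E-k_Z/2\in\Z$ for every connected component $Z$ of $Y$ and of $Y^c$. The goal is to construct an integer multidegree $\underline d=(d_{C_1},\ldots,d_{C_\gamma})$ of total degree $d$ realizing equality in \eqref{E:stabCond} at some connected component of $Y$, while still satisfying the weak semistability inequality at every other connected subcurve; any line bundle with such a multidegree is then $E$-semistable but not $E$-stable. The main obstacle is the convex-geometric statement that an integral point exists on the appropriate facet of the polytope of $E$-semistable multidegrees of total degree $d$, which requires care in verifying compatibility with the semistability inequalities coming from connected subcurves other than the components of $Y$ and $Y^c$.
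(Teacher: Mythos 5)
First, a point of context: the paper does not prove this statement at all --- it is imported verbatim as Lemma~2.10 of \cite{MRV} --- so your attempt can only be measured against that reference. Your part (i) is correct and is the standard argument: the decomposition criterion for non-simplicity from \cite[Prop.~1]{est1}, the identity $\chi(\O_Y)+\chi(\O_{Y^c})=\chi(\O_X)+k_Y$, and $q_Y^E+q_{Y^c}^E=d$ together force equality in \eqref{E:stabCond} at both $Y$ and $Y^c$, which is incompatible with stability. The implications (b)$\Rightarrow$(c)$\Rightarrow$(d) are indeed immediate.

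The two nontrivial implications of (ii) both have genuine gaps. For (a)$\Rightarrow$(b), the ``switching'' step fails: if every connected component of $Y$ has $q_Z^E-k_Z/2\in\Z$ and the non-integral component $W$ guaranteed by generality sits inside $Y^c$, then strictness at $Y^c$ only yields $\deg_Y(I)\le d-\deg_{Y^c}(I)<q_Y^E+k_Y/2$, an \emph{upper} bound, not the strict lower bound you need; the slack $d-\deg_Y(I)-\deg_{Y^c}(I)$ equals the number of points of $Y\cap Y^c$ at which $I$ is not locally free, which can be anything in $[0,k_Y]$, so no local bookkeeping at $Y$ and $Y^c$ alone closes the argument. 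A concrete instance: for a chain $X=C_1\cup C_2\cup C_3$ with multislope $(1/3,\,1,\,2/3)$ the polarisation is general, yet $q_{C_2}^E-k_{C_2}/2=0\in\Z$; strictness at $Y=C_2$ for semistable sheaves is真 enforced only by the inequalities at $C_1\cup C_2$ and $C_2\cup C_3$, which your scheme never invokes. The actual proof writes $Y^c=W_1\cup\dots\cup W_m$ (disjoint connected components), sets $e_j$ to be the number of non-free points of $I$ on $Y\cap W_j$, and plays the inequalities at each $W_j$ against those at each $W_j^c$; combined with equality at $Y$ these force $\deg_{W_j}(I)=q_{W_j}^E+k_{W_j}/2-e_j$ for every $j$, hence $q_{W_j}^E-k_{W_j}/2\in\Z$ for every $j$, so $E$ is integral at $Y$ after all --- that is the contradiction. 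For (d)$\Rightarrow$(a) you explicitly defer the key point, namely the existence of an integral multidegree of total degree $d$ realizing equality at a component of $Y$ while satisfying \eqref{E:stabCond} at every other connected subcurve; since that existence statement is the entire content of the implication, the proof is incomplete as written.
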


\begin{rem}
For any Deligne-Mumford stable curve $X$ of fixed genus $g$, consider $E^X_{can}$, the canonical polarisation of degree $d$ described in Example \ref{Ex:can-pol} above.
Then if follows from \cite[Prop.6.2, Lem. 6.3]{cap} that $E^X_{can}$ is general for every $X\in \ov M_g$ if and only if $(d-g+1,2g-2)=1$.
\end{rem}

The next result shows that different vector bundles can give rise to the same stability conditions and that stability behaves well with respect to tensoring with line bundles.

\begin{prop}\label{P:tensor}
Let $E$ be a $d$-polarisation on some genus $g$ curve $X$, let $p\in X$ be a smooth point and let $I$ be a torsion-free sheaf on $X$. Then
\begin{enumerate}[(i)]
\item $I$ is $E$-semistable (resp. stable, resp. $(E,p)$-quasistable) if and only if it is $E^{\oplus n}$-semistable (resp. stable, resp. $(E^{\oplus n},p)$-quasistable) for any integer $n\in \Z$.
\item Given a line bundle $L$ on $X$, $I$ is $E$-semistable (resp. stable, resp. $(E,p)$-quasistable) if and only if $I\otimes L$ is $E\otimes L$-semistable (resp. stable, resp. $(E\otimes L)$-quasistable).
\end{enumerate}
\end{prop}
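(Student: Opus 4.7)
The plan is to reduce both parts to a direct comparison of the numerical invariant $q_Y^E$ and of the degree $\deg_Y(I)$ under the operations $E\mapsto E^{\oplus n}$ and $(E,I)\mapsto(E\otimes L, I\otimes L)$, and then observe that the stability inequality in \eqref{E:stabCond} is preserved term by term. Throughout, $r$ denotes the rank of $E$ and $Y\subseteq X$ a proper non-trivial subcurve.

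For part (i), I would first check that $E^{\oplus n}$ is again a $d$-polarisation: it has rank $nr$ and degree $nr(d-g+1)$, as required by Definition \ref{D:d-pol}. Then a short computation gives
\[
q_Y^{E^{\oplus n}}=\frac{\deg (E^{\oplus n})_{|Y}}{nr}+\frac{w_Y}{2}=\frac{n\cdot\deg E_{|Y}}{nr}+\frac{w_Y}{2}=q_Y^E,
\]
so the inequality $\deg_Y(I)\geq q_Y^E-\tfrac{k_Y}{2}$ is literally the same inequality as $\deg_Y(I)\geq q_Y^{E^{\oplus n}}-\tfrac{k_Y}{2}$. Since the left-hand side and the marked point $p$ do not depend on the polarisation, all three flavours (semistable, stable, quasistable) transfer immediately.

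For part (ii), first observe that $E\otimes L$ has rank $r$ and degree $r(d+\deg L-g+1)$, hence is a $(d+\deg L)$-polarisation, which matches $\deg(I\otimes L)=d+\deg L$. The core computations I would carry out are:
\[
q_Y^{E\otimes L}=\frac{\deg E_{|Y}+r\deg L_{|Y}}{r}+\frac{w_Y}{2}=q_Y^E+\deg L_{|Y},
\]
and, because $L$ is a line bundle (hence locally free and flat), $(I\otimes L)_{|Y}=I_{|Y}\otimes L_{|Y}$ and therefore the maximal torsion-free quotient satisfies $(I\otimes L)_Y=I_Y\otimes L_{|Y}$; by Riemann–Roch on $Y$ this yields
\[
\deg_Y(I\otimes L)=\deg_Y(I)+\deg L_{|Y}.
\]
Subtracting, the inequality $\deg_Y(I\otimes L)\geq q_Y^{E\otimes L}-\tfrac{k_Y}{2}$ is equivalent to $\deg_Y(I)\geq q_Y^E-\tfrac{k_Y}{2}$, establishing the equivalence for semistability and stability. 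The smooth point $p$ is unchanged by tensoring, so the extra strict-inequality condition over subcurves containing $p$ is preserved, giving the $(E,p)$-quasistability case.

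The only place requiring a little care is the identity $(I\otimes L)_Y=I_Y\otimes L_{|Y}$, since $I$ is not assumed locally free; I would justify it by noting that tensoring with the locally free sheaf $L_{|Y}$ is exact and preserves torsion-freeness, so it commutes with the universal map $I_{|Y}\twoheadrightarrow I_Y$ onto the maximal torsion-free quotient. Once this is in hand, both statements are essentially bookkeeping, and I do not anticipate a serious obstacle.
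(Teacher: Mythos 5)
Your proposal is correct and follows essentially the same route as the paper: both reduce each part to the identities $q_Y^{E^{\oplus n}}=q_Y^E$ and $q_Y^{E\otimes L}=q_Y^E+\deg_Y L$ together with $\deg_Y(I\otimes L)=\deg_Y(I)+\deg_Y L$, and then compare the inequality \eqref{E:stabCond} term by term. Your extra care with $(I\otimes L)_Y=I_Y\otimes L_{|Y}$ is a welcome addition (the paper uses this silently), and your computation of $q_Y^{E\otimes L}$ is the correct one --- the paper's displayed formula carries a stray $+\frac{w_Y}{2}$ that its own conclusion ignores.
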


\begin{proof}
The statement is a direct consequence of Definition \ref{semiquasistabledef}. For the first statement notice that $E^{\oplus n}$ is a $d$-polarisation on $X$ because $\rk E^{\oplus n}=n\rk E$ and $\deg E^{\oplus n}=n\deg E=nr(d-g+1)$. Moreover, given $Y\subseteq X$,
$$q^{E^{\oplus n}}_Y=\frac{\deg_Y E^{\oplus n}}{ \rk E^{\oplus n}}+\frac{w_Y}{2}=\frac{n\deg_Y E}{n\rk E}+\frac{w_Y}{2}=q^E_Y,$$
so the statement is clear.

For the second statement start by observing that $E\otimes L$ is a $d+\deg L$ polarisation on $X$ since $\deg (E\otimes L)=\deg E+r\deg L$ and $\rk (E\otimes L)=\rk E$, which matches the degree of $I\otimes L$. Moreover, given $Y\subseteq X$,
$$q^{E\otimes L}_Y=\frac{\deg_Y E+r\deg_Y L}{r}+\frac{w_Y}{2}=q^E_Y+\deg_Y L+\frac{w_Y}{2},$$
so inequality (\ref{E:stabCond}) holds (resp. strictly holds) for $I$ with respect to $E$ if and only if it holds (resp. strictly holds) for $I\otimes L$ with respect to $E\otimes L$.
The conclusion follows.
\end{proof}

\subsubsection{Families}

Let $f:X\to T$ be a flat, projective morphism whose geometric fibres are curves.

 \begin{convention}
Let $I$ be a $T$-flat coherent sheaf on $X$.
We say that $I$ is relatively torsion-free (resp. rank 1, resp. simple) over $T$ if the fiber of $I$ over $t$, $I(t)$, is torsion-free
(resp. rank 1, resp. simple) for every geometric point $t$ of $T$.
\end{convention}

Consider the functor
\begin{equation}\label{E:func-Jbar}
\ov{\mathbb J}_f^* : \{{\rm Schemes}/T\}  \to \{{\rm Sets}\}
\end{equation}
which associates to a $T$-scheme $Y$ the set of isomorphism classes of $T$-flat, coherent sheaves on $X\times _T Y$
which are relatively simple rank-$1$ torsion-free sheaves.

\begin{fact}[\cite{AK},\cite{est1}]\label{F:AK-simple}
The functor $\ov{\mathbb J}_f^*$ is represented by an algebraic space $\ov{J}_f$, locally of finite type over $T$ and it satisfies the existence part of the valuative criterion of properness.
\end{fact}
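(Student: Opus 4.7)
The plan is to split the assertion into two parts, both of which are standard: (a) representability of $\ov{\mathbb J}_f^*$ by an algebraic space locally of finite type over $T$, going back to Altman--Kleiman, and (b) the existence part of the valuative criterion, due to Esteves. For (a) the strategy is to present $\ov J_f$ as a quotient of an open locus in a relative Quot scheme; for (b) the strategy is to extend a simple rank-$1$ torsion-free sheaf over the generic fibre to one over the whole family, after possibly passing to a ramified cover of the base DVR.

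To prove representability, I would first stratify by fixing the Euler characteristic $\chi$. Picking a relatively very ample line bundle $L$ on $X/T$ and invoking standard boundedness for sheaves with bounded Hilbert polynomial on the fibres, there exists $n_0$ such that for every $n\geq n_0$ every simple rank-$1$ torsion-free sheaf $I$ with $\chi(I)=\chi$ on any geometric fibre satisfies $H^1(X_t, I\otimes L^n)=0$ and $I\otimes L^n$ is globally generated, with $N:=h^0(I\otimes L^n)$ independent of $I$. This yields a surjection $\O^N\twoheadrightarrow I\otimes L^n$, and hence realises the $\chi$-component of $\ov{\mathbb J}_f^*$ as the quotient of an open locus $U\subset \mathrm{Quot}_{X/T}(\O^N,P)$ by the natural $GL_N$-action, where $U$ parametrises those quotients whose target is simple rank-$1$ torsion-free. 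Because the sheaves are simple, the stabilisers are the scalars $\mathbb G_m\subset GL_N$, and the $PGL_N$-action on $U$ is free with a fppf-local section; the quotient $U/PGL_N$ therefore exists as an algebraic space locally of finite type over $T$.

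For the valuative criterion, let $R$ be a DVR over $T$ with fraction field $K$, residue field $k$ and uniformiser $\pi$, and let $I_K$ be a simple rank-$1$ torsion-free sheaf on $X_K$. I would first extend $I_K$ to some $R$-flat coherent sheaf $\tilde I$ on $X_R$ by pushing forward and truncating, then kill the $\pi$-torsion. If $\tilde I\otimes_R k$ has a non-zero maximal $0$-dimensional subsheaf $\tau$, replace $\tilde I$ by the kernel of $\tilde I\twoheadrightarrow (\tilde I\otimes_R k)/\tau$: this remains $R$-flat with the same generic fibre and strictly decreases the length of the embedded torsion on the closed fibre. Iterating, one arrives at $\tilde I$ whose closed fibre is torsion-free of generic rank one. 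To ensure the closed fibre is moreover \emph{simple}, pass to a sufficiently ramified finite extension $R'\supset R$ and perform a Langton-type descending induction: whenever the closed fibre decomposes, perform an elementary modification along a summand and control the change of an appropriate length invariant, showing that after finitely many such steps one lands in a sheaf whose closed fibre is simple.

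The main obstacle I anticipate is the last step. Torsion-freeness is preserved under limits with a bit of bookkeeping, but simplicity is not: a flat limit of simple sheaves can split as a direct sum, which is precisely why uniqueness fails in general and why compactifying the Jacobian requires a further stability condition. Esteves' device of first passing to a ramified cover and then performing elementary modifications is the essential new input beyond the Altman--Kleiman representability, and it is what makes the existence part of the valuative criterion hold in the full generality of the functor $\ov{\mathbb J}_f^*$.
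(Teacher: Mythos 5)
First, a point of comparison: the paper offers no proof of this statement --- it is imported as a Fact from Altman--Kleiman \cite{AK} (representability) and Esteves \cite{est1} (existence part of the valuative criterion) --- so your sketch must be measured against those sources. Your overall architecture (presenting the space via $PGL_N$-quotients of open loci in Quot schemes; proving the valuative criterion by flat extension, elimination of torsion in the special fibre, ramified base change, and elementary modifications) is indeed the architecture of those proofs.

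There is, however, a genuine error in your representability step. You assert that after fixing the Euler characteristic there is a single $n_0$ such that every simple rank-$1$ torsion-free sheaf $I$ on a geometric fibre with $\chi(I)=\chi$ satisfies $H^1(X_t,I\otimes L^n)=0$ and is globally generated for all $n\ge n_0$, with $h^0(I\otimes L^n)$ independent of $I$. This is false once the fibres are reducible: fixing $\chi$ does not bound the multidegree. For instance, on a two-component nodal fibre the line bundles of multidegree $(m,-m)$ are all simple of degree $0$, and for $m\gg n$ one has $H^1(X_t,I\otimes L^n)\neq 0$; hence the family of simple rank-$1$ torsion-free sheaves with fixed Hilbert polynomial is unbounded and no uniform $n_0$ exists. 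This is precisely why the Fact asserts only that $\ov J_f$ is \emph{locally} of finite type over $T$, and why the paper insists that $\JMP$ is not of finite type, boundedness being recovered only after imposing semistability with respect to a polarisation (Remark \ref{R:bounded}). The correct procedure, which is Altman--Kleiman's, is to cover the functor by the open subfunctors of sheaves that become globally generated with vanishing $H^1$ after twisting by $L^n$, one subfunctor for each $n$: each of these is bounded and is a $PGL$-quotient of finite type, and it is their infinite union that is only locally of finite type. A secondary, softer gap: in the valuative-criterion step you assert that a Langton-type induction terminates at a simple special fibre by ``controlling an appropriate length invariant'', but you never produce the invariant, and this termination is the real content of Esteves' theorem. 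His route is to choose a polarisation $E$ over $R$ with respect to which $I_K$ is quasistable (always possible for a single sheaf), and then to run semistable reduction, whose elementary modifications $I\mapsto\ker(I\twoheadrightarrow I_Y)$ terminate because a total defect of semistability strictly decreases; the resulting quasistable limit is simple automatically.
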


\begin{convention}
A relative polarisation on $X$ over $T$ is a vector bundle on $X$ of rank $r>0$ and relative degree $r(d-g+1)$ over $T$.
\end{convention}
\begin{convention}
A relatively torsion-free, rank 1 sheaf $I$ on $X$ over $T$ is (relatively) stable
(resp. semistable) with respect to a polarisation $E$ over $T$ if $I(t)$ is stable
(resp. semistable) with respect to $E(t)$ for every geometric
point $t$ of $T$.

\noindent Let $\sigma:T\to X$ be a section of $f$ through the $T$-smooth locus of $X$.
A relatively torsion-free, rank 1 sheaf $I$ on $X$ over $T$ is relatively $\sigma$-quasistable
with respect to $E$ over $T$ if $I(t)$ is $\sigma(t)$-quasistable with respect to $E(t)$ for every
geometric point $t$ of $T$.
\end{convention}

\begin{rem}\label{R:bounded}
Let $f:X\to T$ be a family of curves as above and assume that $T$ is Noetherian.Then  it follows directly from the definition that the family of all sheaves on $X$ that are relatively semistable over $T$ with respect to a fixed polarisation is bounded.
\end{rem}

Let $E$ be a $d$-polarisation on the family $f:X\to T$ and $\sigma:T\to X$ a smooth section of $f$. Denote by $\ov{\mathbb J}_f^{E,ss}$ (resp. $\ov{\mathbb J}_f^{E,s}$, resp. $\ov{\mathbb J}_f^{E, \sigma}$) the subfunctors of $\ov{\mathbb J}_f$ parametrizing (relatively) $E$-semistable (resp. $E$-stable, resp. $(E, \sigma)$-quasistable) sheaves. Then we have the following result due to Esteves  (see \cite[Theorem A]{est1}:

\begin{fact}\label{F:esteves}
The moduli functors of relatively $E$-semistable (resp. $E$-stable, resp. $(E, \sigma)$-quasistable) simple torsion-free $T$-flat sheaves over $X$,   $\ov{\mathbb J}_f^{E,ss}$, resp. $\ov{\mathbb J}_f^{E,s}$, resp. $\ov{\mathbb J}_f^{E, \sigma}$ are representable by algebraic spaces $\ov J_f^{E,ss}$, resp. $\ov J_f^{E,s}$, resp. $\ov J_f^{E, \sigma}$) of finite type over $T$. Moreover,
\begin{enumerate}[(i)]
\item $\ov J_f^{E,ss}$ is universally closed over $T$;
\item $\ov J_f^{E,s}$ is separated over $T$;
\item $\ov J_f^{E, \sigma}$ is proper over $T$.
\end{enumerate}
\end{fact}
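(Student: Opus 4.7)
The plan is to bootstrap from the algebraic space $\ov J_f$ supplied by Fact~\ref{F:AK-simple} and to prove all three assertions in parallel: first cut out each of the three stability loci as open subspaces of finite type, then establish the separation properties via the valuative criterion; properness of $\ov J_f^{E,\sigma}$ then follows by combining universal closedness and separatedness.

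First I would verify openness. Fix a geometric fibre $X_t$ and let $Y\subset X_t$ be a subcurve; in a flat family the relative Euler characteristic $\chi(I(s)_{Y_s})$ of any lift of $Y$ to a neighbouring fibre is locally constant, so each of the finitely many inequalities \eqref{E:stabCond} that could fail defines a closed condition on $T$. Standard upper semicontinuity then gives openness of the $E$-semistable, $E$-stable and $(E,\sigma)$-quasistable loci inside $\ov{\mathbb J}_f^*$, so they are representable by open algebraic subspaces $\ov J_f^{E,ss}$, $\ov J_f^{E,s}$, $\ov J_f^{E,\sigma}$ of $\ov J_f$. Finite type over $T$ follows from Remark~\ref{R:bounded}.

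For universal closedness of $\ov J_f^{E,ss}$ I would reduce via the valuative criterion to $T=\spec R$ with $R$ a DVR of fraction field $K$ and residue field $k$. Given an $E_K$-semistable $I_K$ on $X_K$, the existence part of the valuative criterion for $\ov J_f$ already supplies a simple rank-$1$ torsion-free extension $\w I$ over $X_R$, whose central fibre need not be semistable. The heart of the argument is Esteves' elementary modification lemma: any two simple rank-$1$ torsion-free extensions of a fixed $I_K$ are related by iterating the operation of twisting by the ideal sheaf of an irreducible component of the special fibre $X_k$, and each such twist shifts the multidegree on $X_k$ in a prescribed way. The set of multidegrees for which \eqref{E:stabCond} holds is a non-empty convex region of the multidegree lattice, so walking through it produces an extension whose central fibre is semistable. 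For separatedness of $\ov J_f^{E,s}$ and properness of $\ov J_f^{E,\sigma}$ I would argue by uniqueness: if $I_1$ and $I_2$ are two such extensions of a common $I_K$ with both central fibres $E$-stable (respectively $(E,\sigma)$-quasistable), then by the modification description they differ by twisting along an effective divisor $D=\sum n_iC_i$ supported on $X_k$; applying the strict form of \eqref{E:stabCond} to both $I_1(s)$ and $I_2(s)$ on the subcurve $Y=\bigcup_{n_i>0}C_i$ (respectively on the component carrying $\sigma(s)$) forces the $n_i$ to vanish, so $I_1\cong I_2$.

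The main obstacle will be the elementary modification step: one must track how $\deg_Y(I)$ and $k_Y$ transform under an elementary modification along a component of $X_k$, and then extract from the strict inequalities exactly the cancellation needed to rigidify the modification walk. In the non-nodal reduced setting this requires handling torsion-free rank-$1$ sheaves that are not locally free along a component, so the modifications must be framed via Fitting ideals rather than simple tensor products with component ideals; in the quasistable case one must further isolate a canonical modification class among those achieving semistability, namely the one that places the distinguished degree shift on the component containing $\sigma(s)$.
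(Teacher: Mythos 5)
First, a point of reference: the paper offers no proof of this statement at all --- it is quoted as Fact~\ref{F:esteves} directly from Esteves' Theorem A in \cite{est1} --- so the only meaningful comparison is with Esteves' original argument. Your overall architecture does match his: openness and boundedness of the three loci inside $\ov J_f$, semistable reduction over a DVR by elementary modifications along subcurves of the special fibre for universal closedness, and uniqueness of stable (resp.\ $\sigma$-quasistable) limits for separatedness (resp.\ properness). These correspond, respectively, to \cite[Theorem 11, Lemma 12, Propositions 26 and 27, Theorem 32]{est1}.

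The genuine gap is your openness step. A subcurve $Y$ of a geometric fibre $X_t$ in general has no ``lift to a neighbouring fibre'' (think of an irreducible component of a nodal special fibre in a smoothing family), so the claim that ``each of the finitely many inequalities \eqref{E:stabCond} that could fail defines a closed condition on $T$'' does not parse: the inequalities are indexed by subcurves of the varying fibres, and both the set of subcurves and the quantities $\deg_Y(I(t))$, $k_Y$, $q_Y^E$ jump as $t$ moves. To show the non-semistable locus is closed one would have to bound the destabilizing subcurves in a proper relative parameter space and control $\deg_Y(I(t))$ under specialization of the pair $(t,Y)$. Esteves sidesteps all of this by first proving a cohomological characterization of (semi/quasi)stability as vanishing of $H^0$ and $H^1$ of suitable twists, after which openness is ordinary semicontinuity of cohomology; this is precisely what the present paper invokes in the proof of Proposition~\ref{P:JacProp}. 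A second, smaller weakness is in the universal closedness step: ``the set of semistable multidegrees is a non-empty convex region, so walking through it produces a semistable extension'' is an assertion rather than an argument --- you still need a terminating modification algorithm (Esteves modifies along a maximal destabilizing subcurve and shows a numerical invariant strictly decreases), and non-emptiness of the target region is itself part of what must be proved.
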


\begin{rem}
It follows from \cite{est1} that if $T=k$ is an algebraically closed field, then $J_f^{E, \sigma}$ is actually a projective scheme.
\end{rem}

\subsection{The moduli stack of pointed stable curves}\label{S:pointed}

In what follows we will use the notation of \cite[Ch. 12, sec. 10, Ch. 17, sec. 3]{ACG}, for which we also refer for further details.

\subsubsection{Universal family and clutchings}\label{S:UFPC}

Let $A$ be a finite set and denote by $\MgP$ the moduli stack of $A$-pointed stable curves and by $\CgP$ the universal family over $\MgP$. Recall that sections of $\CgP$ consist of families of $A$-pointed stable curves together with an extra section with no stability assumptions (therefore possibly intersecting the other sections or the singular locus of the fibers) and that there is a natural isomorphism of stacks
\begin{equation}\label{F:univFamily}
\lambda:\CgP\rightarrow \MgPx
\end{equation}
which is also known as the stabilization morphism.

\begin{convention}
Denote by
$$\pi_x:\MgPx\cong \CgP\rightarrow \MgP$$
the natural forgetful morphism and by
$$\sigma_i:\MgP\rightarrow \MgPx\cong\CgP$$ for $i\in A$, the natural sections of $\pi_x$ given by identifying the extra section $\sigma_x$ with $\sigma_i$.
\end{convention}

The boundary $\MgP\setminus \mathcal M_{g,A}$ of $\MgP$ has a stratification in terms of irreducible divisors as follows:
\begin{itemize}
\item the divisor $\Di$,  which parametrizes the closure of the locus of irreducible curves with one node;
\item for $0\leq b\leq g$ and $B\subseteq A$, the divisors $\Da$, which parametrize the closure of the locus of pointed stable curves $(X;p_1,\dots, p_n)$ with one separating node $p$ of type $(b,B)$, i.e., such that the normalization of $X$ in $p$ is the union of two curves $X_p$ and $X_p'$ such that $X_p\in\overline{\mathcal M}_{b,B}$ and $X_p'\in\ov{\mathcal M}_{g-b,B^c}$ (where $B^c$ denotes $A\setminus B$).
\end{itemize}

Notice that $\Da=\mathcal D_{g-b, B^c}$ and that the stability condition implies that if $b=0$, resp. $b=g$, then the divisors $\Da$  are only defined, respectively if $|B|\geq 2$ or $|B^c|\geq 2$.

Denote by $N_{b,B}(X)$ the set of nodes $p$ in $X$ of type $(b,B)$.

\begin{convention}
There are clutching morphisms
\begin{equation}\label{E:chi-irr}
\xi_{irr}:\ov{\mathcal M}_{g-1,A\cup\{x,y\}}\rightarrow \ov{\M}_{g,A}
\end{equation}
and
\begin{equation}\label{E:chi-bB}
\xi_{b,B}:\ov{\mathcal M}_{b,B\cup\{x\}}\times \ov{\M}_{g-b,B^c\cup\{y\}}\rightarrow \MgP
\end{equation}
whose images are equal to $\Di$ and $\Da$, respectively.
\end{convention}

If $\xi_{irr}(f:X\to T;\sigma_1,\dots,\sigma_{|B|},\sigma_x,\sigma_y)=(\ov f:\ov X\to T;\ov\sigma_1,\dots,\ov\sigma_{|B|})$,
there is an induced $T$-morphism
\begin{equation}
\xi_{irr}^f:X\to \ov X
\end{equation}
compatible with the first $|A|$ sections which, over each geometric fiber of $X_t$ of $f$, is given by the partial normalization of $\ov X_t$ at the node introduced by the identification of $\sigma_x(t)$ with $\sigma_y(t)$.

Similarly, given $$(f_1:X_1\to T;\sigma_1,\dots,\sigma_{|B|},\sigma_x)\in \ov{\mathcal M}_{b,B\cup\{x\}}$$ and $$(f_2:X_2\to T;\sigma_1',\dots,\sigma_{|B|^c}',\sigma_y)\in \ov{\mathcal M}_{g-b,B^c\cup\{y\}}$$ with image by $\xi_{b,B}$ equal to $(\ov f:\ov X\to T;\ov \sigma_1,\dots,\ov \sigma_{|P|})\in \ov{\mathcal M}_{g,A}$,
$\xi_{b,B}$ induces a $T$-morphism
\begin{equation}
\xi_{b,B}^{f_1,f_2}:X_1\cup X_2\to \ov X
\end{equation}
with the obvious compatibility condition in the sections, which is again, fiber by fiber, given by the partial normalization of $\ov X$ at the node introduced by the identification of $\sigma_x$ with $\sigma_y$.

\subsubsection{Divisor classes on $\MgP$ and the Picard group}

Following \cite{arbcorn}, \cite{arbcorn2} and \cite{ACG}, we will now briefly consider some divisor classes on $\MgP$ in order to describe its Picard group.

We start with the classes of the components of the boundary of $\MgP$, that is, the classes of the irreducible divisors $\Di$ and $\Da$ that we have introduced in the previous section. Denote by $\di$ and $\da$ the classes of $\Di$ and $\Da$, respectively, and write $\delta$ to indicate the total class of the boundary of $\MgP$, that is, the sum of $\di$ and of all the classes $\da$.
Next, consider the forgetful morphism
$$\pi_x : \MgPx\to\MgP$$
and denote by $\omega={\omega}_{\pi_x}$ the relative dualizing sheaf. For every $i\in A$, the image of the section
$\sigma_i:\MgP\rightarrow \MgPx$ is precisely $\mathcal D_{0,\{i,x\}}$. Consider also the so called $\psi$-classes
$$\psi_i:=\sigma_i^*(\omega), \; i\in A$$
and the Hodge class $\lambda$, which is just the first Chern class of the locally free sheaf ${\pi_x}_*(\omega)$,
$$\lambda:=c_1({\pi_x}_*(\omega)).$$
Using results of \cite{harer}, Arbarello-Cornalba in \cite[Theorem 2]{arbcorn} described the Picard group of $\MgP$, as follows.

\begin{thm}[Arbarello-Cornalba]\label{T:arbcorn}
For all $g\geq 3$ and any finite set $A$, Pic$(\MgP)$ is freely generated by $\lambda$, the classes $\psi_i$, $i\in A$, and the boundary classes. For $g=2$, Pic$(\MgP)$ is still generated by these classes, but there are relations in this case.
\end{thm}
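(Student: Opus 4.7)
The natural strategy is to induct on $n = |A|$, using the forgetful morphism $\pi_x$ to relate $\mathrm{Pic}(\MgPx)$ to $\mathrm{Pic}(\MgP)$. The base case $A=\emptyset$ reduces to computing $\mathrm{Pic}(\Mg)$, which the plan is to handle via Harer's theorem $\mathrm{Pic}(\mathcal{M}_g) = \mathbb{Z}\cdot \lambda$ (for $g \geq 3$) combined with the standard right-exact divisor sequence
\[
\bigoplus_{D} \mathbb{Z}[D] \longrightarrow \mathrm{Pic}(\Mg) \longrightarrow \mathrm{Pic}(\mathcal{M}_g) \longrightarrow 0,
\]
where $D$ runs over the boundary divisors $\Di$ and $\mathcal D_{b,\emptyset}$ for $1 \leq b \leq \lfloor g/2 \rfloor$. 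One then verifies that these boundary classes are $\mathbb{Z}$-linearly independent in $\mathrm{Pic}(\Mg)$ by pairing them against suitable test curves concentrated in each boundary stratum.

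For the inductive step, consider $\pi_x : \MgPx \cong \CgP \to \MgP$. The plan is to exhibit a surjection
\[
\pi_x^* \, \mathrm{Pic}(\MgP) \;\oplus\; \mathbb{Z} \cdot \psi_x \;\oplus\; \bigoplus_{(b,B):\, x \in B} \mathbb{Z} \cdot [\mathcal D_{b,B}] \;\twoheadrightarrow\; \mathrm{Pic}(\MgPx),
\]
where the new boundary divisors are precisely those whose combinatorial type genuinely involves the added marking $x$; crucially $\mathcal D_{0,\{i,x\}} = \sigma_i(\MgP)$ is a section divisor that does not arise from a pullback. Surjectivity would follow by a Leray-type argument: any line bundle $L$ on $\MgPx$ restricts generically along the fibers of $\pi_x$ to a line bundle on a smooth pointed curve, whose class is captured by its degree at $\sigma_x$; after twisting by a suitable combination of $\psi_x$ and of the section divisors $\sigma_i(\MgP)$, the corrected bundle descends to $\MgP$ away from the new boundary and then differs from an honest pullback only by a combination of the listed boundary classes.

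To promote surjectivity to freeness for $g \geq 3$, the plan is to build explicit test families in $\MgPx$. For the ``old'' classes $\lambda$, $\psi_i$ ($i \in A$), $\di$ and $\mathcal D_{b,B}$ with $x \notin B$ one re-uses the test families provided by the induction hypothesis and pulls them back through $\pi_x$; for each new generator $\psi_x$ and $\mathcal D_{b,B}$ with $x \in B$ one exhibits a curve in $\MgPx$ on which the prescribed class has non-zero intersection while all the other generators pair trivially. Concretely, sliding the point $x$ along a fixed component of a stable pointed curve isolates $\psi_x$, and moving $x$ across an attaching node isolates the corresponding new boundary class. The resulting intersection matrix is block-triangular with non-vanishing diagonal, forcing $\mathbb{Z}$-linear independence of the asserted generating set.

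The main obstacle is the descent step in the induction: one must control how line bundles on the family of singular fibers restrict, in particular preventing double-counting when $\sigma_x$ specializes to a node or collides with another section $\sigma_i$, and one has to handle non-trivial automorphisms of stable pointed curves since $\MgP$ is only Deligne-Mumford, not a scheme. For $g = 2$ the same construction still yields the same generating set, but Mumford's relation $10\lambda = \delta_0 + 2\delta_1$ already holds on $\ov{\mathcal M}_2$ (a consequence of the hyperelliptic involution) and pulls back non-trivially via $\pi_x^*$ to every $\ov{\mathcal M}_{2,A}$, accounting for the failure of freeness while leaving the generation statement intact.
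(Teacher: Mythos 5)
First, a point of reference: the paper does not prove this statement at all --- it is quoted from Arbarello--Cornalba \cite[Theorem 2]{arbcorn}, so there is no internal proof to compare against, and your sketch can only be measured against the argument in loc.\ cit. Your overall architecture (Harer's computation of the second (co)homology of the mapping class group, the right-exact excision sequence killing the boundary, and test curves for $\Z$-linear independence) is indeed the architecture of the actual proof, and your base case, your independence step, and your explanation of the $g=2$ failure via $10\lambda=\di+2\delta_1$ are all sound in outline.

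The genuine gap is in the inductive descent step. You assert that the restriction of a line bundle $L$ on $\MgPx$ to a general fiber of $\pi_x$ --- which is a smooth $A$-pointed curve $X$, not a point --- ``is captured by its degree at $\sigma_x$'' and can therefore be corrected by $\psi_x$ and the section divisors so as to descend to $\MgP$. A line bundle on $X$ is not determined by its degree: after subtracting a suitable combination of $\psi_x$ (which restricts to $\omega_X$) and the divisors $\mathcal D_{0,\{i,x\}}$ (which restrict to the points $p_i$), you are left with a section of the relative degree-zero Picard scheme of the universal curve over $\mathcal M_{g,A}$, and you must prove that this section is trivial. That is precisely a pointed Franchetta-type statement, equivalent to the assertion that $\Pic(\mathcal M_{g,A\cup\{x\}})$ is generated by $\lambda$ and the $\psi$-classes, i.e.\ to Harer's theorem for the surface with \emph{one more} puncture --- which is exactly what your induction is supposed to produce at that stage. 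So the induction does not close on the strength of the hypothesis on $\Pic(\MgP)$ alone; the input from \cite{harer} is needed for every number of marked points, not just for $A=\emptyset$. This is in fact how Arbarello--Cornalba proceed: they first deduce from Harer that $\Pic(\mathcal M_{g,n})$ is freely generated by $\lambda,\psi_1,\dots,\psi_n$ for \emph{every} $n$ (their Theorem 1), and only then apply the excision sequence and test curves to $\MgP$; no induction through the universal curve is required. If you wish to keep your fibration argument, you must explicitly invoke Harer's result for punctured surfaces (or an independent proof of the Franchetta statement) at the descent step.
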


\section{The stack of simple sheaves over the stack of reduced curves}\label{S:simple}

Let $A$ be a finite set and let $\mathfrak M_A$ be the stack of $A$-marked reduced curves or any open substack of this. The fibers of $\mathfrak M_A$ over a scheme $T$ consist therefore of proper and $T$-flat morphisms of schemes
$f:X\to T$ whose fibers are reduced curves together with $n:=|A|$ smooth (and distinct) sections of $f$, $\sigma_i:T\to X$, $i\in A$ (notice that $A$ could be empty).
Recall that $\mathfrak M_A$ is known to be algebraic (see e.g. \cite[Corollary 1.5]{hall}).

Denote by $\mathfrak J_{\mathfrak M_A}^*$ the category fibered in groupoids (CFG) over $\SCH_S$ whose fibers over an $S$-scheme $T$ consist of pairs
 $(\xymatrix{
X \ar[r]_{f} & T \ar @/_/[l]_{\sigma_i}
}
, F)$
where
\begin{itemize}
\item $(f:X\to T,\sigma_{i,i\in A})\in\mathfrak M_A(T)$;
\item $F$ is a $T$-flat coherent sheaf over $X$ whose fibers over $T$ are torsion-free rank-1 simple sheaves.
\end{itemize}
Morphisms between two such pairs
$(\xymatrix{
X \ar[r]_{f} & T \ar @/_/[l]_{\sigma_i}
}
, F)$
and
$(\xymatrix{
X' \ar[r]_{f'} & T' \ar @/_/[l]_{\sigma'_i}
}
, F')$
are given by cartesian diagrams
\begin{equation}\label{D:morphism}
\xymatrix{
{X} \ar[r]^{\bar g} \ar[d]^{f} & {X'} \ar[d]_{f'}\\
{T} \ar[r]_{g}  \ar @{->} @/^/[u]^{\sigma_i} & {T'} \ar @{->} @/_/[u]_{\sigma'_{i}}
}
\end{equation}
commuting with the sections, i.e., such that $\bar g\circ\sigma_i=\sigma'_i \circ g, \forall i\in A$, together with an isomorphism
$$\beta: F\to\bar g^* F'\otimes f^*(M)$$
for some $M\in\Pic(T)$.

One checks that $\mathfrak J_{\mathfrak M_A}^*$ is a pre-stack and define $\mathfrak J_{\mathfrak M_A}$ to be its stackification. Denote by $\pi$ the natural forgetful morphism from $\mathfrak J_{\mathfrak M_A}$ to $\mathfrak M_A$.

\begin{rem}\label{R:rigidification}
Notice that $\mathfrak J_{\mathfrak M_A}$ is the $\mathbb G_m$-rigidification of the stack  $\mathfrak Jac_{\mathfrak M_A}$ whose sections coincide with those of $\mathfrak J_{\mathfrak M_A}$ and whose morphisms are as in \eqref{D:morphism} together with an isomorphism $\alpha:F\to\bar g^* F'$.
\end{rem}

The following statement shows that the results of Altman and Kleiman and of Esteves in \cite[Theorem 7.4]{AK} and \cite[Theorem 32]{est1}, respectively, hold in this more general setup.

\begin{prop}\label{P:algebraicity}
The stack $\mathfrak J_{\mathfrak M_A}$ (and $\mathfrak Jac_{\mathfrak M_A}$) is algebraic and its natural forgetful morphism onto $\mathfrak M_A$ is representable and satisfies the existence part of the valuative criterion for properness.
Moreover, if ${\mathfrak M_A}$ is Deligne-Mumford, then also $\mathfrak J_{\mathfrak M_A}$ will be Deligne-Mumford.
\end{prop}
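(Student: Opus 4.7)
The plan is to bootstrap the representability result for a fixed family of curves, namely Altman--Kleiman and Esteves' Fact \ref{F:AK-simple}, to the stacky setting in which the family is allowed to vary. The central assertion to verify is that for any scheme $T$ equipped with a morphism $T\to\MA$ classifying a family $(f:X\to T,\sigma_i)$, the $2$-fibre product $T\times_{\MA}\JMP$ is canonically equivalent to Esteves' algebraic space $\ov J_f$. This identification is the step I expect to demand the most care: one must check that the stackification procedure applied to $\JMP^*$, together with the twist by $f^*M$ built into its morphisms, reproduces fppf-locally the equivalence ``up to pullback of a line bundle from the base'' that is implicit in the Altman--Kleiman moduli functor $\ov{\mathbb J}_f^*$. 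Once this bookkeeping is in place the remainder of the argument is largely formal.

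Granted the identification, algebraicity of $\JMP$ and representability of $\pi$ follow together. Simplicity of $F$ forces $\mathrm{End}(F)=\mathcal O_T$, and these scalar automorphisms are precisely what the $\mathbb G_m$-rigidification (Remark \ref{R:rigidification}) quotients out, so each fibre of $\pi$ becomes a sheaf of sets, namely $\ov J_f$, which is an algebraic space by Fact \ref{F:AK-simple}. Combined with the algebraicity of $\MA$ cited from \cite{hall}, this shows that $\pi$ is representable. To produce an atlas for $\JMP$ I would pick a smooth scheme cover $U\to\MA$, pull back to obtain the family $X_U\to U$, and take any \'etale atlas of $\ov J_{X_U/U}$; the composition yields a smooth scheme atlas for $\JMP$.

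For the existence part of the valuative criterion, a commutative square in which a DVR $R$ maps to $\MA$ and its generic point $\spec K$ maps to $\JMP$ amounts to the data of a family $f:X\to\spec R$ of pointed reduced curves together with a simple rank-$1$ torsion-free sheaf on its generic fibre; by Fact \ref{F:AK-simple}, after a finite extension of $R$ this sheaf extends to the whole family, supplying the required lift. The Deligne--Mumford assertion is then formal: if $\MA$ is DM its diagonal is unramified, and representability of $\pi$ forces the relative diagonal of $\pi$ to be a monomorphism and hence unramified as well; composing, the diagonal of $\JMP$ is unramified, so $\JMP$ is DM. Finally, since $\JacMP\to\JMP$ is by construction a $\mathbb G_m$-gerbe (Remark \ref{R:rigidification}), algebraicity of $\JacMP$ follows from that of $\JMP$ together with the algebraicity and smoothness of $B\mathbb G_m$, and the valuative lifting for $\JacMP\to\MA$ descends from the rigidified case since the gerbe map is surjective on geometric points.
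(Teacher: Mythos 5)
Your proposal is correct and follows essentially the same route as the paper: the key step in both is identifying the fibre product $T\times_{\MA}\JMP$ with the Altman--Kleiman/Esteves algebraic space $\ov J_f$ (the twist by $f^*M$ in the morphisms of $\JMP^*$ matching the ``up to a line bundle from the base'' equivalence), from which representability, the atlas construction, the Deligne--Mumford property, and the valuative lifting via Fact~\ref{F:AK-simple} all follow exactly as in the paper's argument, with $\JacMP$ handled through the $\mathbb G_m$-gerbe structure.
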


\begin{proof}

Notice that the algebraicity of $\mathfrak J_{\mathfrak M_A}$ together with the last statement will follow by showing that $\mathfrak J_{\mathfrak M_A}$ is representable over $\mathfrak M_A$. Indeed, the base-change of a smooth atlas of ${\mathfrak M_A}$ will be an atlas for $\mathfrak J_{\mathfrak M_A}$, which will turn out to be \'etale if the original atlas of ${\mathfrak M_A}$ was \'etale. This shows that the last statement follows from our proof of the first. Moreover, the algebraicity of $\mathfrak Jac_{\mathfrak M_A}$ follows  from the fact that it is a $\mathbb G_m$-gerbe over $\mathfrak J_{\mathfrak M}$, which is algebraic.



Consider a morphism $\mu_f:T\to \mathfrak M$ and let $f:X\to T$ be the element of $\mathfrak M(T)$ associated to $\mu$. Let
$$\mathfrak J_f:=T\times_{\mathfrak M}\mathfrak J_{\mathfrak M}$$
be the base-change of $\pi$ via $\mu_f$.

It is easy to see that $\mathfrak J_f$ is a CFG over $\SCH_T$ whose sections over a $T$-scheme $U$ consist of triples
$$(X_U\to U, F, \alpha)$$
where:
\begin{itemize}
\item
\begin{equation*}
\xymatrix{
{X_U:=U\times_T X} \ar[r] \ar[d] & {X} \ar[d]^{f}\\
{U} \ar[r] & {T}
}
\end{equation*}
\item  $F$ is a $U$-flat coherent sheaf over $X_U$ whose fibers over $U$ are torsion-free rank-1 simple sheaves;
\item $\alpha:X_U\to X_U$ is a $U$-isomorphism of $X_U$.
\end{itemize}

Consider now the functor
$$\mathbb J_f^*:\SCH_T\to SET$$
that associates to a $T$-scheme $U$ the set of $T$-flat coherent sheaves over $X$ whose fibers over $T$ are torsion-free rank-1 sheaves.
It follows from the work of Altman and Kleiman in \cite{AK} that, since the fibers of $f:X\to T$ are reduced, $\mathbb J_f^*$ is a pre-sheaf and its \'etale sheafification $\mathbb J_f$ is representable by an algebraic space $\ov J_f$.

Now, it is easy to check that the CFG associated to $\mathbb J_f$ is equivalent to $\mathfrak J_f$ and the result follows from the representability of $\mathbb J_f$.

In order to check that $\pi:\mathfrak J_{\mathfrak M_A}\to {\mathfrak M_A}$ satisfies the existence part of the valuative criterion for properness, consider a morphism $\mu_f:\spec R\to \mathfrak M_A$, where $R$ is a $DVR$ with fraction field $K$ and $\mu_f$ is the moduli morphism associated to a section $f:X\to \spec R$ in $\mathfrak M_A(R)$; and assume there is a commutative diagram
$$\xymatrix{
\spec K \ar[r] \ar[d]_{i} & \mathfrak J_{\mathfrak M_A} \ar[d]^{\pi} \\
\spec R \ar[r]_{\mu_f} & \mathfrak M_A.
}
$$
We shall show that there is a morphism $h:\spec R\to \mathfrak J_{\mathfrak M_A}$ making the digram commute.
From the previous part, we know that $\spec R\times_{\mathfrak M_A} \mathfrak J_{\mathfrak M_A}=\mathfrak J_f\cong \ov J_f$, the algebraic space representing the functor $\mathbb J_f$.
Let $\iota_f:\spec K\to \ov J_f$ be the unique morphism making the following diagram commute:
$$\xymatrix{
\spec K \ar[rd]^{\iota_f} \ar@/^/[rrd] \ar@/_/[rdd]_{i}\\
& \ov J_f \ar[r]^{\ov\mu_f} \ar[d]_{\pi_f} & \mathfrak J_{\mathfrak M_A} \ar[d]^{\pi} \\
&\spec R \ar[r]_{\mu_f} & \mathfrak M_A,
}
$$
where the outwards diagram is the previous one and the inwards one is cartesian.
Now, from Fact \ref{F:AK-simple}, we know that $\ov J_f\to \spec R$ satisfies the existence part of the valuative criterion for properness.
This implies that there is a morphism $\ov h:\spec R\to \ov J_f$ such that $\ov h\circ i=\iota_f$ and $\pi_f\circ \ov h=id$. Now, for $h:=\ov\mu_f\circ \ov h$, an easy diagram chasing shows that the whole diagram commutes, so we get that there exists the desirable lifting of $\ov\mu_f$ to $\spec R$, and we conclude.

\end{proof}

\begin{rem}
The representability of $\mathfrak J_{\mathfrak M_A}$ over ${\mathfrak M_A}$ would also follow from Abramovich-Vistoli's citerion on \cite[4.4.3]{AV} as we are considering simple sheaves. However,  this criterion is stated in the case when both
the domain and the codomain are Deligne-Mumford to start with, so we can not apply it directly here.

\end{rem}


Notice that $\JMP$ is certainly not of finite type: first of all it decomposes in infinitely many components
$$\JMP=\coprod_{d\in \Z} \JMP^d$$
where $\JMP^d$ denotes the open substack of $\JMP$ parametrizing families of $A$-pointed curves endowed with relatively flat torsion-free sheaves of relative degree $d$. In the same way, we will write
$$\JacMP=\coprod_{d\in \Z} \JacMP^d$$
for the analogous decomposition of $\JacMP$ by degree.

\begin{rem}\label{R:isom}
Notice that if $A\neq \emptyset$, then for all $d,d'\in \mathbb Z$, $\JMP^d\cong \JMP^{d'}$ (analogously  $\JacMP^d\cong \JacMP^{d'}$).
In fact, it is enough to pick $j\in A$ and consider the following isomorphism
\begin{align*}
\phi_j^{d'-d}: & \hspace{.9cm}\JMP^d \hspace{.9cm}\longrightarrow \hspace{.9cm}\JMP^{d'}\\
& (\xymatrix{
X \ar[r]_{f} & T \ar @/_/[l]_{\sigma_i}
}
, F) \mapsto   (\xymatrix{
X \ar[r]_{f} & T \ar @/_/[l]_{\sigma_i}
}, F\otimes (\mathcal O_X(\sigma_j))^{\otimes(d'-d)})
\end{align*}
(we will also denote by $\phi_j^{d'-d}$ the analogous isomorphism between $\JacMP^d$ and $\JacMP^{d'}$).
\end{rem}

\subsection{The universal family over $\JMP$}

Denote by $\Jacx$ the universal family over $\JacMP$. Sections of $\Jacx$ over an $S$-scheme $T$ consist of a section of $\JacMP$ over $T$, say $(f:X\to T; \sigma_1,\dots,\sigma_{|A|};F)$ together with an extra section $\sigma:T\to X$ (with no further requirements); denote by $\tilde \pi_x:\Jacx\to \JacMP$ the natural forgetful morphism from $\Jacx$ onto $\JacMP$.
Similarly, denote by $\Jx$ the universal family over $\JMP$, and by $\pi_x:\Jx\to\JMP$ the forgetful morphism.

The universal family $\Jacx$ is endowed with a universal sheaf $\IM$ defined as follows: it assigns to a section $(f:X\to T; \sigma_1,\dots,\sigma_{|A|};F;\sigma)$ of $\Jacx$ over $T$ the sheaf $\sigma^*(F)$ on $T$,
$$\IM\left (\xymatrix{
X \ar[r]_{f} & T \ar @/_/[l]_{\sigma_i}
}
;F;\sigma\right):= \sigma^*(F).$$



Consider the following diagram, where the horizontal morphisms are the rigidification maps
$$\xymatrix{
\Jacx \ar[d]_{\tilde\pi} \ar[r]^{\nu_1} & \Jx \ar[d]^{\pi}\\
\JacMP  \ar[r]_{\nu} &\JMP.}$$

We shall now see that, if $A\neq \emptyset$, the universal sheaf $\IM$ descends to the rigidification $\JMP$.

\begin{prop}\label{P:univ-desc}
Let $A\neq \emptyset$. Then the universal sheaf $\IM$ on $\JacMP$ descends to a universal sheaf $\ov\IM$ on $\JMP$.
\end{prop}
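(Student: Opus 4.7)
The morphism $\nu: \JacMP \to \JMP$ is a $\mathbb G_m$-gerbe --- the $\mathbb G_m$ being generated by the scalar automorphisms $\lambda\cdot\mathrm{id}_F$ of the simple sheaf $F$ --- and by base change $\nu_1: \Jacx \to \Jx$ is one too. The standard criterion for descent along a $\mathbb G_m$-gerbe asserts that a quasi-coherent sheaf descends to the rigidification if and only if the inertial $\mathbb G_m$ acts with weight zero on it. My plan is therefore to exhibit a line bundle on $\Jacx$ with the appropriate $\mathbb G_m$-weight so as to twist $\IM$ into a weight-zero sheaf that descends.

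Step one is the weight computation for $\IM$. At a section $(f:X\to T;\sigma_1,\dots,\sigma_{|A|};F;\sigma)$ of $\Jacx$, the scalar automorphism $\lambda\cdot\mathrm{id}_F$ of $F$ pulls back via $\sigma$ to the scalar automorphism $\lambda\cdot\mathrm{id}_{\sigma^*F}$ of $\IM=\sigma^*F$. Hence $\IM$ has uniform $\mathbb G_m$-weight $+1$, so it does not descend as it stands and must be twisted.

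Step two is where the hypothesis $A\neq\emptyset$ enters. Pick any $i\in A$; since $\sigma_i:\JacMP\to\Jacx$ factors through the relative smooth locus of the universal curve (where every torsion-free rank-$1$ sheaf is locally free), $\mathcal L_i := \sigma_i^*F$ is a genuine line bundle on $\JacMP$, and by the same reasoning as above it carries $\mathbb G_m$-weight $+1$. Pulling back along $\tilde\pi_x$, the line bundle $\tilde\pi_x^*\mathcal L_i$ on $\Jacx$ still has weight $+1$. Consequently the tensor product
\[
\ov\IM \; := \; \IM\otimes\tilde\pi_x^{*}(\mathcal L_i^{-1})
\]
has $\mathbb G_m$-weight $0$, and hence descends to a quasi-coherent sheaf on $\Jx$ enjoying the expected universal property.

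The only point requiring genuine care is the weight bookkeeping: verifying that the inertial $\mathbb G_m$ of the pulled-back gerbe $\nu_1$ acts on both $\IM$ and $\tilde\pi_x^*\mathcal L_i$ exactly as described, and that tensor products interact additively with weights. These are standard consequences of the weight decomposition of quasi-coherent sheaves on a gerbe banded by $\mathbb G_m$, so I anticipate no real obstacle. A subsidiary remark worth recording is that the resulting $\ov\IM$ depends on the choice of $i\in A$: two choices $i,j\in A$ yield descended sheaves differing by the pullback from $\MA$ of the line bundle $\mathcal L_i^{-1}\otimes\mathcal L_j$, so the construction is canonical only up to such a twist.
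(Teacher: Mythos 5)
Your proof is correct, and it takes a genuinely different---and in fact more economical---route than the paper's. The paper first invokes Remark \ref{R:isom} (twisting by powers of $\O(\sigma_j)$, which is where $A\neq\emptyset$ enters there) to reduce to the single degree $d=g$, and then produces the required weight-one line bundle on $\JacMP^{g}$ as the determinant of cohomology $\Lambda(0,1)=d_{\tilde\pi_x}(\IM^{g})$, citing the computation of \cite[Lemma 6.2]{MVPic} to see that $\mathrm{res}(\Lambda(0,1))=1$; the reduction to $d=g$ is forced because the inertial weight of $d_{\tilde\pi_x}(\IM^{d})$ is $\chi=d-g+1$, which equals $1$ only for $d=g$. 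You instead exhibit a weight-one line bundle directly as $\mathcal L_i=\sigma_i^*F$, which is honestly invertible because the markings lie in the relative smooth locus, where a relatively torsion-free rank-one sheaf is locally free. This works uniformly in every degree $d$ (no degree reduction needed), bypasses the determinant-of-cohomology computation entirely, and makes the role of the hypothesis $A\neq\emptyset$ completely transparent: you need a marked section along which to restrict. The two arguments are of course instances of the same principle---surjectivity of $\mathrm{res}\colon\Pic(\JacMP^{d})\to\Pic(B\mathbb G_m)\cong\Z$ is precisely the existence of a weight-one line bundle, and either one trivializes the gerbe---and your closing remark that $\ov\IM$ is canonical only up to a twist by a line bundle pulled back from the base is exactly consistent with the definition of $\JMP$, whose morphisms already identify sheaves differing by such twists, so no uniqueness is lost.
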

\begin{proof}
Start by noticing that it is enough to check that the restriction of $\IM$ to each component $\JacMP^d$, where $d$ is any integer, descends to $\JMP$. We can further assume that all curves have the same arithmetic genus, which we will denote by $g$.
Moreover, as a consequence of Remark \ref{R:isom} (which we can use since $A\neq \emptyset$), it is enough to consider one specific degree, so let us prove that the universal sheaf on $\JacMP^g$ descends to $\JMP^g$.

Consider the $\mathbb G_m$-gerbe structure morphism
$$\nu:\JacMP\to\JMP.$$
As explained in \cite{MVPic}, $\nu$ induces the following exact sequence at the level of Picard groups

$$0\longrightarrow \Pic(\JMP^g)\stackrel{\nu^*}{\longrightarrow} \Pic(\JacMP^g) \stackrel{\rm res}{\longrightarrow} \Pic(B\mathbb G_m)\cong\mathbb Z .$$
We will show that the map $\rm res$ is surjective.
Using the same notation as in \cite{MVPic}, denote by $\Lambda(0,1)$ the line bundle on $\Pic(\JacMP^g)$ defined as
$$\Lambda(0,1):=d_{\tilde\pi_x}(\IM^g)$$
where $\IM^g$ is the restriction to $\Jacx$ of $\IM$ and $d_{\tilde\pi_x}$ denotes the determinant of cohomology of the morphism $\tilde\pi_x$.
Then, the same computation done in \cite[Lemma 6.2]{MVPic} shows that $\rm res(\Lambda(0,1))=1$.

\end{proof}

\begin{rem}
Notice that, as shown in \cite[Lemma 6.2]{MVPic}, the statement of Proposition \ref{P:univ-desc} does not hold in the case when $A=\emptyset$. Indeed, in this case it is shown in loc. cit. that the statement holds if and only if $(d-g+1,2g-2)=1$.
\end{rem}

\subsection{Curves with locally planar singularities}

In the following we show that if we restrict to the case of curves of given genus $g$ and with locally planar singularities $\mathfrak M_g^{lp}$ then the stack of simple sheaves over it is actually smooth and, if the degree of the sheaves is fixed, also irreducible.

\begin{prop}\label{P:loc-plan}
Let $\mathfrak M^{lp}_{g,A}$ denote the stack of $A$-pointed reduced curves of genus $g$ with locally planar singularities. Then, the stack $\mathfrak{J}_{\mathfrak M^{lp}_{g,A}}$ of sections of $\mathfrak M^{lp}_{g,A}$  endowed with rank $1$ torsion-free simple sheaves is smooth and its substack  $\mathfrak J^d_{\mathfrak M^{lp}_{g,A}}$ parametrizing sheaves with fixed degree $d\in\mathbb Z$ is also irreducible.
\end{prop}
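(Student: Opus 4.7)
The plan is to first establish that both $\mathfrak M^{lp}_{g,A}$ and the forgetful morphism $\pi:\mathfrak J_{\mathfrak M^{lp}_{g,A}}\to\mathfrak M^{lp}_{g,A}$ are smooth, and then deduce irreducibility of the degree-$d$ substack from density of the smooth-curve locus together with smoothness.

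Smoothness of $\mathfrak M^{lp}_{g,A}$ is standard: any curve with locally planar singularities is a local complete intersection, so its infinitesimal deformations are unobstructed, and the presence of smooth marked sections does not alter this (sections through the smooth locus deform freely).

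For smoothness of $\pi$ I would appeal to the formal criterion. Consider a small extension $A'\twoheadrightarrow A$ of Artin local rings with square-zero kernel $J$, a family $f':X'\to\mathrm{Spec}\,A'$ in $\mathfrak M^{lp}_{g,A}(A')$ and a simple rank-$1$ torsion-free sheaf $F$ on $X:=X'\times_{A'}A$. The obstruction to extending $F$ to a coherent $X'$-sheaf lies in $\mathrm{Ext}^2_{X_0}(F_0,F_0)\otimes_k J$, where the subscript $0$ denotes restriction to the closed fibre. The key technical input is the vanishing
\[
\mathrm{Ext}^2_{X_0}(F_0,F_0)=0
\]
for every locally planar curve $X_0$ and every simple torsion-free rank-$1$ sheaf $F_0$ on $X_0$, which amounts to the smoothness of the absolute compactified Jacobian at $[F_0]$. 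In the integral case this is the classical theorem of Altman--Iarrobino--Kleiman, and in the general reduced locally planar case it is by now standard and can be extracted from \cite{MRV} (or \cite{est1}). Combined with the representability of $\pi$ from Proposition \ref{P:algebraicity}, this yields smoothness of $\pi$, hence of $\mathfrak J_{\mathfrak M^{lp}_{g,A}}$.

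For irreducibility of $\mathfrak J^d_{\mathfrak M^{lp}_{g,A}}$, let $\mathcal J^d_{g,A}:=\pi^{-1}(\mathcal M_{g,A})\subset\mathfrak J^d_{\mathfrak M^{lp}_{g,A}}$ be its open substack over the smooth-curves locus. Since $\mathcal M_{g,A}$ is irreducible and the fibres of $\mathcal J^d_{g,A}\to\mathcal M_{g,A}$ are the connected Picard varieties $\mathrm{Pic}^d$ of smooth genus-$g$ curves, $\mathcal J^d_{g,A}$ is itself irreducible. Smoothability of planar singularities makes $\mathcal M_{g,A}$ dense in $\mathfrak M^{lp}_{g,A}$, and flatness of $\pi$ (a consequence of smoothness) together with generization-lifting forces $\mathcal J^d_{g,A}$ to be dense in $\mathfrak J^d_{\mathfrak M^{lp}_{g,A}}$. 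A smooth stack containing an irreducible dense open substack is itself irreducible, which completes the argument. The main obstacle is verifying the $\mathrm{Ext}^2$-vanishing above in the reducible planar case; once this is granted the remainder of the argument is essentially formal.
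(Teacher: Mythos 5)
The decisive step in your argument --- the vanishing $\mathrm{Ext}^2_{X_0}(F_0,F_0)=0$ for every simple rank-$1$ torsion-free sheaf on a locally planar curve, equivalently the smoothness of the fibres of $\pi$ --- is false, and the factorisation of the problem into (smooth base) composed with (smooth forgetful morphism) collapses with it. The compactified Jacobian of a singular locally planar curve is in general \emph{not} smooth: already for an integral one-nodal curve $X_0$ and a sheaf $F_0$ that fails to be locally free at the node $p$, the stalk of $F_0$ at $p$ is isomorphic to the maximal ideal of $\widehat{\O}_{X_0,p}\cong k[[x,y]]/(xy)$, whose minimal free resolution is infinite and $2$-periodic, so $\mathcal{E}xt^2_{\O_{X_0}}(F_0,F_0)\neq 0$; since this sheaf is supported at finitely many points and $X_0$ is a curve, the local-to-global spectral sequence gives $\mathrm{Ext}^2_{X_0}(F_0,F_0)\supseteq H^0\bigl(X_0,\mathcal{E}xt^2(F_0,F_0)\bigr)\neq 0$. (Altman--Iarrobino--Kleiman prove \emph{irreducibility} of the compactified Jacobian of an integral planar curve, not smoothness: the compactified Jacobian of a nodal plane cubic is the nodal cubic itself.) So $\pi$ is not smooth, and no choice of references will make it so.

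What is true, and what the paper actually uses, is that the deformation functor of the \emph{pair} $(X_0,F_0)$ is unobstructed when $X_0$ has locally planar singularities: deforming the curve together with the sheaf kills the obstruction that is present when the curve is held fixed. This is the theorem of Fantechi--G\"ottsche--van Straten cited in the paper (see also \cite[Fact 3.2]{MRV}), and it yields smoothness of the total space $\mathfrak{J}_{\mathfrak M^{lp}_{g,A}}$ directly via the infinitesimal lifting criterion, with no claim about the fibres of $\pi$; the pointed case then reduces to $A=\emptyset$ by realising $\mathfrak{J}_{\mathfrak M^{lp}_{g,A\cup\{x\}}}$ as an open substack of the universal family over $\mathfrak{J}_{\mathfrak M^{lp}_{g,A}}$. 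Your irreducibility step also needs repair at one point: density of the locus over smooth curves should come from the unobstructedness of $\mathrm{Def}_{(X_0,F_0)}$ (every pair deforms to a line bundle on a smooth curve), not from flatness of $\pi$, which you derived from the nonexistent smoothness of $\pi$. Once density is known, irreducibility is immediate --- the closure of an irreducible dense open substack is the whole stack --- and smoothness is not even needed for that final step.
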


\begin{proof}
Start by noticing that $J_{\mathfrak M^{lp}_{g,A}}$ is locally of finite presentation. Then the lifting criterion for smoothness holds also to prove the smoothness of $J_{\mathfrak M^{lp}_{g,A}}$ (see e.g. \cite[Section 2]{heinloth}).
Start by considering the case when $A=\emptyset$.
Let $R$ be a local Artinian algebra and $J\subset R$ an ideal of $R$ with $J^2=0$. We must then check that given a commutative diagram
\begin{equation*}
\xymatrix{
\spec(R/J) \ar[r] \ar[d]& J_{\mathfrak M^{lp}_{g,A}}\ar[d]\\
\spec R \ar[r] \ar@{-->}[ur]& k
}
\end{equation*}
the dotted line can be completed, i.e., the morphism $\spec(R/J)\to J_{\mathfrak M^{lp}_{g,A}}$ can be extended to a morphism $\spec R\to J_{\mathfrak M^{lp}_{g,A}}$.
Let $(X,I)$ be the image of $0$ under $\spec (R/I)\to J_{\mathfrak M^{lp}_{g,A}}$ and consider the local deformation functor Def$_{(X,I)}$ of the pair $(X,I)$, where $X$ is assumed to be a reduced curve and $I$ a rank $1$ torsion-free sheaf (not necessarily simple). Then, according to \cite{FGS} (see also \cite[Fact 3.2]{MRV}), the functor Def$_{(X,I)}$ is smooth, which implies that Def$_{(X,I)}(R)$ surjects onto Def$_{(X,I)}(R/J)$. We easily conclude that the above dotted morphism exists and we conclude.

Let us now see that the pointed case follows from the unpointed case by induction on the number $|A|$ of points in $A$. Suppose that $J_{\mathfrak M^{lp}_A}$ is smooth and consider $J_{\mathfrak M^{lp}_{A\cup \{x\}}}$. Then $J_{\mathfrak M^{lp}_{A\cup \{x\}}}$ is an open substack of $J_{\mathfrak M^{lp}_{A\cup\{x\}}}$, the universal family over $J_{\mathfrak M^{lp}_{A}}$. But if $J_{\mathfrak M^{lp}_A}$ is smooth, it follows immediately that the universal family $J_{\mathfrak M^{lp}_{A\cup\{x\}}}$ is also smooth, so $J_{\mathfrak M^{lp}_{A\cup \{x\}}}$ is smooth too.

Finally, to show that $J^d_{\mathfrak M^{lp}_A}$ is irreducible it is enough to observe that it is smooth and that the locus inside it parametrizing smooth curves is dense.

\end{proof}

\subsection{Fine compactified Jacobian stacks}\label{S:finequotients}

Following Esteves constructions of fine compactifications of Jacobians for families of reduced curves in \cite{est1}, we will now consider certain open substacks of the stack of simple sheaves in order to get universally closed, separated or proper substacks of $\JMP$ (see Fact \ref{F:esteves}). We start by discussing how to endow $\mathfrak M_A$ with a (continuous) polarisation.

\subsubsection{Polarisations on $\mathfrak M_A$}

In order to endow the moduli stack of $A$-pointed curves with a natural polarisation we have to prescribe, for each family $(f:X\to T; \sigma_1,\dots, \sigma_{|A|})\in \mathfrak M_A(T)$, a relative polarisation of $X$ over $T$ in a functorial way. We will assume for what follows that the sections of $\mathfrak M_A$ are families of curves having genus equal to $g$. We thus define, following Definition \ref{D:d-pol} for the case of a single curve, what do we mean with a $d$-polarisation on $\MgP$.

\begin{defi}
A $d$-polarisation (or just polarisation) on $\MA$ is a vector bundle $\E$ on $\mathfrak M_{A\cup\{x\}}$ of rank $r>0$ and degree $r(d-g+1)$ for some integer $d$.
\end{defi}

Let $(f:X\to T; \sigma_1,\dots, \sigma_{|A|})\in \MA(T)$ and consider the moduli morphism $\mu_f:T\to\MA$. Then by definition the pullback of $\mathfrak M_{A\cup\{x\}}\to\MA$ over $\mu$ gives back the family $X\to T$. So, using the notation of the cartesian diagram
\begin{equation}\label{N:pol-families}
\xymatrix{
X\ar[r]^{\ov\mu_f} \ar[d]_f &\mathfrak M_{A\cup\{x\}} \ar[d]\\
T\ar[r]_{\mu_f} &\MA
}
\end{equation}
we see that a vector bundle $\E$ on $\mathfrak M_{A\cup\{x\}}$ prescribes on $f:X\to T$ the vector bundle ${\ov\mu_f}^*(\E)$; denote this polarisation by $\E^f$. Equivalently, we consider the fiber of  $\mathfrak M_{A\cup\{x\}}$ over $(f:X\to T; \sigma_1,\dots, \sigma_{|A|})$, which is a family over $X$, and consider the vector bundle over $X$ that $\E$ prescribes for that family.

\begin{defi}
Let $\E$ be a polarisation on $\MA$ and $\mathcal Y$ a substack of $\MA$. Then we say that $\E$ is general on $\Y$ if, for all pointed curves $(f:X\to \spec(k),{p}_{i,i\in A})\in \mathcal Y(k)$, the polarisation $\E^f$ is general on the family $f:X\to\spec(k) $ in the sense of Definition \ref{def-int}.
\end{defi}

\subsubsection{Substacks of  Jacobians of (semi)stable sheaves}

\begin{defi}\label{D:polarisation}
Let $\E$ be a $d$-polarisation on $\MA$ and $\sigma$ a section of $\mathfrak M_{A\cup\{x\}}\to \MA$, in case it exists. We will denote with $\JMP^{\E,ss}$ (resp. $\JMP^{\E,s}$, $\JMP^{\E,\sigma}$) the substack of $\JMP$ parametrizing families in $\MA$ together with $\E$-semistable (resp. $\E$-stable, $(\E,\sigma)$-quasistable) torsion-free rank-1 simple sheaves.
\end{defi}

\begin{prop}\label{P:JacProp}
Let $\E$ be a $d$-polarisation on $\MA$
and $\sigma$ a section of $\CA\to \MA$, in case it exists.
%
Then the stacks $\JMP^{\E,\star}$  are open substacks of $\JMP$ endowed with a representable
morphism of finite type $\pi:\JMP^{\E,\star}\to \MA$, where $\star=ss,s$ or $\sigma$, such that:
\begin{enumerate}
\item $\pi:\JMP^{\E,ss}\to\MA$ is universally closed;
\item $\pi:\JMP^{\E,s}\to\MA$ is separated;
\item $\pi:\JMP^{\E,\sigma}\to \MA$ is proper.
\end{enumerate}
\end{prop}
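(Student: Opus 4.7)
The approach is to leverage the representability of $\pi:\JMP\to\MA$ established in Proposition \ref{P:algebraicity}, combined with Esteves' single-family results recorded in Fact \ref{F:esteves}. Concretely, given any morphism $\mu_f:T\to\MA$ coming from a family $(f:X\to T;\sigma_1,\dots,\sigma_{|A|})\in\MA(T)$, I set $E^f:=\ov\mu_f^*(\E)$, using the cartesian diagram \eqref{N:pol-families}, and, when a global section $\sigma:\MA\to\CA$ is given, denote by $\sigma^f:T\to X$ its base change. By construction $E^f$ is a relative $d$-polarisation on $X/T$, $\sigma^f$ is a smooth section of $f$, and the fiber product $T\times_{\MA}\JMP$ is identified with the algebraic space $\ov J_f$ of Fact \ref{F:AK-simple}.

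The first step is to verify that $\JMP^{\E,\star}\subseteq \JMP$ is an open substack for $\star\in\{ss,s,\sigma\}$. Since openness is smooth-local on the target and $\MA$ is algebraic, it suffices to check this after base change along a smooth atlas $T\to\MA$; but that base change is precisely the open immersion $\ov J_f^{E^f,\star}\hookrightarrow \ov J_f$ provided by Fact \ref{F:esteves}. Representability of $\JMP^{\E,\star}\to\MA$ is then inherited from that of $\pi$ itself, and the finite-type claim descends from the finite-type claim in Fact \ref{F:esteves} after the same smooth base change, together with the boundedness observation in Remark \ref{R:bounded}.

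For properties (i)--(iii), I would once more pass to a smooth cover: each of \emph{universally closed}, \emph{separated}, and \emph{proper} is fppf-local on the target, so it is enough to verify them after base-changing $\pi$ along some $\mu_f:T\to\MA$ with $T$ a scheme in a smooth atlas of $\MA$. Under such a pullback, $\JMP^{\E,ss}$, $\JMP^{\E,s}$, and $\JMP^{\E,\sigma}$ become $\ov J_f^{E^f,ss}$, $\ov J_f^{E^f,s}$, and $\ov J_f^{E^f,\sigma^f}$ respectively, for which the three relative properties over $T$ are exactly the content of items (i), (ii), (iii) of Fact \ref{F:esteves}.

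The main subtlety I anticipate is making rigorous the identification of the base changes of the substacks $\JMP^{\E,\star}$ with the Esteves spaces $\ov J_f^{E^f,\star}$. This boils down to checking that $\E$-semistability, $\E$-stability and $(\E,\sigma)$-quasistability as defined on $\MA$ agree, fiber by fiber, with the single-family notions of Definition \ref{semiquasistabledef} applied to $E^f$ and $\sigma^f$. This is a functoriality statement: the quantities $q_Y^{E^f(t)}$ and the position of the marked point on each geometric fiber $X_t$ coincide with those induced by $\E$ and $\sigma$ by the very definition of the pullbacks, and with that compatibility recorded the three properties drop out formally from Fact \ref{F:esteves}.
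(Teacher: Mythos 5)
Your proposal is correct and follows essentially the same route as the paper: everything is reduced, via base change along families $T\to\MA$, to Esteves' relative results, with finite type coming from the boundedness of Remark \ref{R:bounded} and properties (i)--(iii) descending from the corresponding statements for $\ov J_f^{E,ss}$, $\ov J_f^{E,s}$ and $\ov J_f^{E,\sigma}$ in Fact \ref{F:esteves}. The one point where your citation does not quite carry the weight you place on it is openness: Fact \ref{F:esteves} as recorded only asserts representability by algebraic spaces of finite type over $T$ together with (i)--(iii), and does not state that $\ov J_f^{E,\star}\hookrightarrow\ov J_f$ is an open immersion; the paper instead obtains openness by semicontinuity from Esteves' cohomological characterizations of (semi/quasi)stability in \cite[Theorem 11 and Lemma 12]{est1}. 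With that reference substituted, your descent argument goes through as written.
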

\begin{proof}

The fact that $\JMP^{\E,ss}$, $\JMP^{\E,s}$ and $\JMP^{\E,\tau}$ are open substacks of $\JMP$ follows by semicontinuity from the cohomological characterizations given by Esteves in \cite[Theorem 11 and Lemma 12]{est1} while the fact that $\pi$ is of finite type in each case follows from Remark \ref{R:bounded} above.

Now the fact that $\JMP^{\E,ss}\to \MA$ is universally closed follows from \cite[Theorem 32(3)]{est1}. The fact that $\JMP^{\E,s}$ is separated follows from \cite[Proposition 26]{est1} while the fact that $\JMP^{\E,\tau}$ is proper follows from the valuative criterion for properness, which follows easily from \cite[Proposition 27 and Theorem 32(4)]{est1}.
\end{proof}

\section{Universal compactified Jacobians over $\MgP$}\label{S:Mgn}

From now on we will consider the case when $\mathfrak M_A=\MgP$ is the moduli stack of $A$-pointed stable curves of given genus $g$. See section \ref{S:pointed} for an account of the properties and notation we will be using when referring to $\MgP$.

\subsection{The stack of simple sheaves  over $\MgP$}

Let $\JgP$ denote the stack $\JMP$ in the case when $\mathfrak M=\ov{\M}_g$; we will call $\JgP$ the compactified universal Jacobian over $\MgP$.
We will write $n=|A|$ and sometimes also $\Jgn$ to denote $\JgP$ when no confusion is likely.
Sections over an $S$-scheme $T$ consist of a family of $A$-stable pointed curves $(f:X\to T; \sigma_1,\dots,\sigma_{n})$ together with a $T$-flat coherent sheaf $I$ over $X$ whose fibers over $T$ are torsion-free rank-1 simple sheaves.


Notice that $\JgP$ is certainly not of finite type: first of all it decomposes in infinitely many components
$$\JgP=\coprod_{d\in \Z} \JgP^d$$
where $\JgP^d$ denotes the open substack of $\JgP$ parametrizing families of $A$-pointed stable curves endowed with relatively flat torsion-free sheaves of relative degree $d$. For $d=0$, consider also $\JgP^{\un 0}$ to be the open substack of $\JgP^0$ whose sections consist of families of $A$-pointed stable curves as before endowed with relatively flat torsion-free sheaves of relative multidegree $\un 0$. Notice that it is natural to consider $\JgP^{\un 0}$ since it contains the universal generalized Jacobian stack $\mathcal J_{g,A}^{\un 0}$ as an open substack.

The compactified universal Jacobian stack  satisfies the following properties

\begin{prop}\label{P:UnivJac-DM}
Let $n=|A|$ and $g\geq 0$ be such that $2g-2+n>0$. Then, for any integer $d\in \mathbb Z$, the stack $\JgP^d$ is a smooth and irreducible Deligne-Mumford stack of dimension $4g-3+n$ endowed with a representable morphism $\pi$ onto $\MgP$ satisfying the existence part of the valuative criterion for properness.
\end{prop}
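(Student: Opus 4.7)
The plan is to deduce the proposition as a specialization of Proposition \ref{P:algebraicity} combined with the smoothness and irreducibility result of Proposition \ref{P:loc-plan}, capped by an elementary dimension count.

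First I would apply Proposition \ref{P:algebraicity} with $\mathfrak M_A = \MgP$. Since $\MgP$ is a smooth irreducible Deligne-Mumford stack of dimension $3g-3+n$ (see Section \ref{S:pointed}), that proposition yields at once that $\JgP$ is Deligne-Mumford and that the natural forgetful morphism $\pi:\JgP\to\MgP$ is representable and satisfies the existence part of the valuative criterion for properness. The substack $\JgP^d\subset \JgP$ is open, since the relative Euler characteristic is locally constant in flat families, so it inherits the Deligne-Mumford property, the representability of $\pi$, and the valuative criterion. This settles three of the six clauses of the statement.

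Next, for smoothness and irreducibility, the observation is that every stable curve is nodal, hence has locally planar singularities; consequently $\MgP$ is an open substack of $\mathfrak M^{lp}_{g,A}$, and $\JgP^d$ is an open substack of $\mathfrak J^d_{\mathfrak M^{lp}_{g,A}}$. By Proposition \ref{P:loc-plan} the latter is smooth and irreducible. Openness then transfers smoothness immediately, and transfers irreducibility as soon as $\JgP^d$ is nonempty — which is clear, as any smooth $A$-pointed curve of genus $g$ together with any line bundle of degree $d$ furnishes a point.

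Finally, I would compute the dimension by exploiting the smoothness and irreducibility just established: it suffices to evaluate $\dim \JgP^d$ at one point. Over the open substack $\mathcal M_{g,A}\subset \MgP$ of smooth curves, after $\mathbb G_m$-rigidification the fibre of $\pi$ over a curve $X$ is the Picard variety $\mathrm{Pic}^d(X)$, which has dimension $g$; hence $\pi$ is smooth of relative dimension $g$ on this dense open locus. Therefore
\[
\dim \JgP^d \;=\; \dim \MgP + g \;=\; (3g-3+n)+g \;=\; 4g-3+n,
\]
and by irreducibility this is the dimension everywhere. The main conceptual work — algebraicity plus the valuative criterion on one side, smoothness plus irreducibility on the other — is already done in Propositions \ref{P:algebraicity} and \ref{P:loc-plan}; the only step requiring any care is checking that all of these properties transfer along the open immersions $\MgP \hookrightarrow \mathfrak M^{lp}_{g,A}$ and $\JgP^d\hookrightarrow \JMP$, which is entirely formal.
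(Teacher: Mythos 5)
Your proposal is correct and follows essentially the same route as the paper: Proposition \ref{P:algebraicity} for the Deligne--Mumford property, representability and the valuative criterion, and Proposition \ref{P:loc-plan} for smoothness and irreducibility via the open immersion of $\JgP^d$ into the stack of simple sheaves over locally planar curves. Your write-up is in fact slightly more careful than the paper's, which leaves the nonemptiness check and the dimension count $\dim\MgP+g=4g-3+n$ implicit.
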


\begin{proof}
The fact that $\JgP$ is a Deligne-Mumford algebraic stack and that $\pi$ is representable and satisfies the existence part of the valuative criterion for properness follows from Proposition \ref{P:algebraicity}, and the same clearly holds for $\JgP^d$. Finally, $\JgP$ is smooth and irreducible since it is an open substack of $\JMP^{lp}$, which, as we have seen in Proposition \ref{P:loc-plan}, is smooth and irreducible.
\end{proof}

\subsubsection{Universal family and clutchings}

We will now see that the operations we defined in \ref{S:UFPC} for the moduli stack of stable pointed curves, namely the universal family and the clutchings can also be defined on $\JgP$ or in some of its components.

Denote by $\UgP$ the universal family over $\JgP$. Sections of $\UgP$ over an $S$-scheme $T$ consist of a section of $\JgP$ over $T$, say $(f:X\to T; \sigma_1,\dots,\sigma_{|A|};I)$ together with an extra section $\sigma:T\to X$ with no stability requirements. The universal family $\UgP$ is endowed with a universal sheaf $\I$ defined by assigning to a family  $(f:X\to T; \sigma_1,\dots,\sigma_{|A|};I;\sigma)$ the sheaf $\sigma^*(I)$ on $T$.

The natural forgetful morphism $\pi:\UgP\to \JgP$ is naturally endowed with $|A|$ sections, that we will denote also by $\sigma_i$, for $i\in A$. It is possible to define natural morphisms
$$\lambda: \UgP\to\JgPx$$
by stabilizing the family as in (\ref{F:univFamily}) and by pulling back the sheaf (or some twisted version of it) to the stabilized family. However, notice that $\lambda$ will not be an isomorphism this time: we can not recover all torsion-free simple rank $1$ sheaves on a curve from a curve with fewer components without requiring conditions on the multidegrees of the sheaves.

By abuse of notation, denote again by $\Di$ and $\Da$ the pullback of the boundary divisors $\Di$ and $\Da$ from $\MgP$, which are again divisors on $\JgP$.

Let $(f:X\to T;\sigma_1,\dots,\sigma_{|A|},\sigma_x,\sigma_y; I)\in \ov{\mathcal J}_{g-1,A\cup\{x,y\}}$ and $(\ov f:\ov X\to T;\ov \sigma_1,\dots,\ov \sigma_{|A|})=: \ov\xi_{irr}(f:X\to T;\sigma_1,\dots,\sigma_{|A|},\sigma_x,\sigma_y)\in \JgP$. Then
 $(\xi_{irr}^f)_*(I)$ is a simple torsion free sheaf on $\ov X$. In fact, an endomorphism of  $(\xi_{irr}^f)_*(I)$ would induce an endomorphism of  $I$ as well. Notice that $(\xi_{irr}^f)_*(I)$ will be non locally-free in the new nodes of the fibers of $\ov f$. We conclude  that the morphism $\xi_{irr}$ defined in \ref{E:chi-irr} induces also a morphism $\ov\xi_{irr}$ from $\ov{\mathcal J}_{g-1,A\cup\{x,y\}}$ to $\JgP$.

Let us now consider the morphism $\xi_{b,B}$ defined in $\ref{E:chi-bB}$ and let us see that it extends to a morphism $\ov\xi_{b,B}$ from $\ov{\mathcal J}_{b,B\cup\{x\}}\times \ov{\mathcal J}_{g-b,B^c\cup\{y\}}$. Consider then $(f_1:X_1\to T;\sigma_1,\dots,\sigma_{|B|},\sigma_x; I_1)\in \ov{\mathcal J}_{b,B\cup\{x\}}$ and $(f_2:X_2\to T;\sigma_1',\dots,\sigma_{|B^c|}',\sigma_y; I_2)\in \ov{\mathcal J}_{g-b,B^c\cup\{y\}}$. It follows from Example 37 in \cite{est1} that $\xi_{b,B}^{f_1,f_2}$ induces an isomorphism between pairs $(I_1,I_2)$ of simple torsion-free sheaves on $(X_1,X_2)$ and simple torsion-free sheaves $I$ on $\ov X:=\xi_{b,B}^{f_1,f_2}(X_1,X_2)$ given fiberwise by gluing the sheaves on the new (separating) node (notice that a simple sheaf on $\ov X$ must be locally free on the new node).

Summing up, we have

\begin{prop}\label{P:clutch-forget}
There are clutching morphisms
$$\ov\xi_{irr}:\ov{\mathcal J}_{g-1,A\cup\{x,y\}}\rightarrow \JgP$$
and
$$\ov\xi_{b,B}:\ov{\mathcal J}_{b,B\cup\{x\}}\times \ov{\mathcal J}_{g-b,B^c\cup\{y\}}\rightarrow \JgP$$
defined by applying $\ov\xi_{irr}$ and $\ov\xi_{b,B}$, respectively, to the families of $A$-pointed curves.
The images of $\ov\xi_{irr}$ and $\ov\xi_{b,B}$ naturally lie, respectively, in $\Di$ and $\Da$.

\end{prop}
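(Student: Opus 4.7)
The plan is to define each of the two morphisms on $T$-points by combining the underlying clutching morphism on pointed curves (already available from Section \ref{S:UFPC}) with a natural operation on the accompanying simple rank-$1$ torsion-free sheaf, and then to verify that the result is again an object of $\JgP(T)$; functoriality on morphisms of families will follow from the naturality of both operations. Since the clutchings on $\MgP$ factor through the boundary divisors $\Di$ and $\Da$, the last claim about the images being contained in the corresponding boundaries will be automatic from the construction.

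For $\ov\xi_{irr}$, given $(f:X\to T;\sigma_1,\dots,\sigma_{|A|},\sigma_x,\sigma_y;I)\in \ov{\mathcal J}_{g-1,A\cup\{x,y\}}(T)$, I will send it to $(\ov f:\ov X\to T;\ov\sigma_1,\dots,\ov\sigma_{|A|};\,(\xi_{irr}^f)_*I)$, where $\xi_{irr}^f:X\to\ov X$ is the $T$-morphism from \S\ref{S:UFPC}. The key technical points to check are that $(\xi_{irr}^f)_*I$ is $T$-flat, and that fiberwise it is torsion-free, rank-$1$ and simple. Flatness and the torsion-free rank-$1$ property on each geometric fiber follow because $\xi_{irr}^f$ restricts on fibers to a partial normalization at a single node and pushforward along such a finite map preserves $T$-flatness and produces a rank-$1$ torsion-free sheaf which is non-locally-free precisely at the new node. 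Simplicity on fibers is the observation already indicated in the text: an endomorphism of $(\xi_{irr}^f)_*(I_t)$ pulls back to an endomorphism of $I_t$, and simplicity of $I_t$ forces it to be a scalar.

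For $\ov\xi_{b,B}$, given a pair of families $(f_1:X_1\to T;\dots;I_1)$ and $(f_2:X_2\to T;\dots;I_2)$ in the two factors, I will glue $I_1$ and $I_2$ along the $T$-section obtained by identifying $\sigma_x$ with $\sigma_y$, producing a $T$-flat sheaf on $\ov X=\xi_{b,B}^{f_1,f_2}(X_1,X_2)$ that is fiberwise a simple rank-$1$ torsion-free sheaf locally free at the new separating node; this is exactly the content of \cite[Example 37]{est1}, which I will invoke to guarantee that the gluing is well-defined on families and that simplicity is preserved. Functoriality with respect to morphisms in $\ov{\mathcal J}_{b,B\cup\{x\}}\times\ov{\mathcal J}_{g-b,B^c\cup\{y\}}$ is immediate since both the clutching of curves and the gluing of sheaves commute with base change. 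The only step I expect to require real care is verifying the relative torsion-freeness and flatness of the pushforward/gluing in families (as opposed to on fibers), but this is standard once one checks that $\xi_{irr}^f$ and $\xi_{b,B}^{f_1,f_2}$ are finite $T$-morphisms whose fiberwise restrictions are the expected partial normalizations, so no serious obstacle arises. The statement about images landing in $\Di$ and $\Da$ then follows directly from the corresponding statement for $\xi_{irr}$ and $\xi_{b,B}$ on $\MgP$.
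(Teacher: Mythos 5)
Your proposal is correct and follows essentially the same route as the paper: the irreducible clutching is defined by pushing the sheaf forward along the partial normalization $\xi_{irr}^f$, with simplicity preserved because an endomorphism of the pushforward induces one of $I$, and the separating clutching is defined by gluing the two sheaves at the new node via \cite[Example 37]{est1}. Your additional remarks on flatness and functoriality in families are sensible but do not change the substance of the argument.
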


\subsection{Fine compactified universal Jacobian stacks}

\subsubsection{Polarisations on $\MgP$}

Following section \ref{S:finequotients}, recall that a $d$-polarisation (or just polarisation) on $\MgP$ is a vector bundle $\E$ on $\CgP$ of rank $r>0$ and degree $r(d-g+1)$ for some integer $d$.
This prescribes, for each family $(f:X\to T; \sigma_1,\dots, \sigma_{|A|})\in \MgP(T)$, a relative polarisation of $X$ over $T$ in a functorial way.


In order to define natural and explicit polarisations on $\MgP$ we will use below the sheaf of regular functions of $\CgP$, the dualizing sheaf of $\pi:\CgP\to\MgP$, the sections of $\pi$ and the boundary divisors.



\subsubsection{Substacks of universal Jacobians of (semi)stable sheaves}

\begin{defi}\label{D:polarisation2}
Let $\E$ be a $d$-polarisation on $\MgP$ and $\sigma$ a section of $\CgP\to \MgP$. Accordingly to Definition \ref{D:polarisation}, we will denote with $\JgP^{\E,ss}$ (resp. $\JgP^{\E,s}$, $\JgP^{\E,\sigma}$) the substack of $\JgP$ parametrizing families of $A$-pointed stable curves together with $\E$-semistable (resp. $\E$-stable, $(\E,\sigma)$-quasistable) torsion-free simple sheaves.
\end{defi}

Proposition \ref{P:JacProp} applied to $\MgP$ yields the following.

\begin{prop}\label{P:univJacProp}
Let $\E$ be a $d$-polarisation on $\MgP$
and $\sigma$ a section of $\CgP\to \MgP$.
Then the stacks $\JgP^{\E,ss}$ (resp. $\JgP^{\E,s}$, $\JgP^{\E,\sigma}$) are smooth and irreducible Deligne-Mumford stacks of dimension $4g-3+n$ endowed with a representable
morphism $\pi$ onto $\MgP$. In fact, they are open substacks of $\JgP$ of finite type and
\begin{enumerate}[(i)]
\item $\JgP^{\E,ss}$ is universally closed;
\item $\JgP^{\E,s}$ is separated;
\item $\JgP^{\E,\sigma}$ is proper.
\end{enumerate}
\end{prop}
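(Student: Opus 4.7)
The plan is to deduce this statement by specializing the general results of Section \ref{S:simple} to the case $\MA := \MgP$ and by combining them with the smoothness and irreducibility already established for $\JgP^d$.

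First, I would invoke Proposition \ref{P:JacProp} with $\MA = \MgP$: since $\MgP$ is algebraic (in fact Deligne--Mumford) and admits the required marked sections $\sigma_i$, this immediately yields the openness of the three substacks $\JgP^{\E,ss}$, $\JgP^{\E,s}$, $\JgP^{\E,\sigma}$ inside $\JgP$, the representability and finite type of their forgetful morphisms to $\MgP$, and the three assertions (i)--(iii) on universal closedness, separatedness and properness. The Deligne--Mumford property descends through Proposition \ref{P:algebraicity}, using that $\MgP$ is itself a DM stack and that being DM is preserved under open substacks.

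For smoothness and the dimension count, I would use that every stable $A$-pointed curve of genus $g$ has nodal, hence locally planar, singularities, so that $\MgP$ is an open substack of $\mathfrak M^{lp}_{g,A}$, and thus $\JgP^d$ is an open substack of $\mathfrak J^d_{\mathfrak M^{lp}_{g,A}}$; by Proposition \ref{P:loc-plan} the latter is smooth, so $\JgP^d$ is smooth as well. The three stability substacks $\JgP^{\E,\star}$, being open in $\JgP^d$, are then automatically smooth, and share the dimension $4g-3+n$ recorded in Proposition \ref{P:UnivJac-DM}.

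The remaining point, irreducibility of the three loci, is the step I would expect to require a short separate argument and hence is the main (though mild) obstacle. Since $\JgP^d$ is irreducible by Proposition \ref{P:UnivJac-DM}, it suffices to verify that each $\JgP^{\E,\star}$ is a non-empty open substack of $\JgP^d$: the open locus $\Jac^d \subset \JgP^d$ parametrizing line bundles on \emph{smooth} pointed curves is non-empty (as $2g-2+n>0$) and sits inside every $\JgP^{\E,\star}$, because a smooth (irreducible) curve has no proper non-trivial subcurve, so inequality \eqref{E:stabCond} of Definition \ref{semiquasistabledef} is vacuous and any line bundle of degree $d$ is automatically $\E$-stable and $(\E,\sigma)$-quasistable. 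All other assertions being mere specializations of results already proved, this completes the plan.
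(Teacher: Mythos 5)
Your proposal is correct and follows essentially the same route as the paper, which simply declares the result an immediate consequence of Propositions \ref{P:JacProp} and \ref{P:UnivJac-DM}; your filling-in of smoothness via locally planar singularities and of irreducibility via non-emptiness of the open loci (using that stability is vacuous on smooth curves) matches how Proposition \ref{P:UnivJac-DM} is itself established. The one ingredient the paper cites that you leave implicit is the properness of $\MgP$: Proposition \ref{P:JacProp} only gives that the three morphisms are universally closed, separated, or proper \emph{over} $\MgP$, and one must compose with the proper structure morphism of $\MgP$ to obtain assertions (i)--(iii) in their absolute form.
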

\begin{proof}
The proof is an immediate consequence of Propositions \ref{P:JacProp}, \ref{P:UnivJac-DM} and of the fact that $\MgP$ is a proper stack.


\end{proof}

\begin{rem}\label{quotientstack}
The fact that  $\JgP^{\E,ss}$, $\JgP^{\E,s}$, $\JgP^{\E,\sigma}$ are Deligne-Mumford stacks endowed with a representable morphism to $\MgP$, which is a quotient stack, implies that themselves are quotient stacks (see e.g. \cite[section 5]{kreschGDM}).
\end{rem}


\subsection{Coarse moduli spaces}\label{S:coarse}

Let $\E$ be a $d$-polarisation on $\MgP$ and $\sigma$ a section $\sigma:\MgP\to \ov{\mathcal M}_{g,A\cup\{x\}}$.

The aim of this section is to show that $\JgP^{\E,\sigma}$ has a projective moduli space.

First of all, by directly applying Keel-Mori's criterion, we have the following.

\begin{prop}
Let $\E$ be a $d$-polarisation on $\MgP$ and $\sigma:\MgP\to \ov{\mathcal M}_{g,A\cup\{x\}}$ a section of the universal family morphism of $\MgP$. Then $\JgP^{\E,\sigma}$ admits a coarse moduli space $\JgA^{\E,\sigma}$, which is a proper algebraic space.
\end{prop}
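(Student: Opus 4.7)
The plan is to apply the Keel--Mori theorem on the existence of coarse moduli spaces for algebraic stacks with finite inertia, and then to descend properness from the stack to its coarse space. First, I would assemble the required hypotheses: by Proposition \ref{P:univJacProp}, $\JgP^{\E,\sigma}$ is a Deligne--Mumford stack of finite type, and the structural morphism $\pi:\JgP^{\E,\sigma}\to \MgP$ is proper. Composing with the structure map $\MgP\to \spec k$ (which is proper since $\MgP$ is a proper Deligne--Mumford stack) shows that $\JgP^{\E,\sigma}$ itself is proper over $\spec k$.

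Next I would verify the finite-inertia hypothesis of Keel--Mori. Since $\JgP^{\E,\sigma}$ is Deligne--Mumford its diagonal is unramified, and separatedness (which is part of properness) forces the diagonal to be proper. A proper unramified morphism is finite, so the diagonal is finite; equivalently, the inertia stack $I(\JgP^{\E,\sigma})\to \JgP^{\E,\sigma}$ is finite. The Keel--Mori theorem then produces a coarse moduli space $\JgA^{\E,\sigma}$, which is an algebraic space, together with a canonical morphism $c:\JgP^{\E,\sigma}\to \JgA^{\E,\sigma}$ that is proper, surjective, quasi-finite, and a bijection on geometric points.

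Finally I would deduce properness of $\JgA^{\E,\sigma}$ over $\spec k$ by descending properness along $c$. Separatedness of $\JgA^{\E,\sigma}$ is part of the Keel--Mori output (the coarse moduli space of a separated DM stack with finite inertia is separated); finite type and universal closedness of $\JgA^{\E,\sigma}$ are inherited from $\JgP^{\E,\sigma}$ via the surjective proper map $c$. There is essentially no obstacle in this proof: all the nontrivial content, in particular the properness of the stack $\JgP^{\E,\sigma}$ established in Proposition \ref{P:univJacProp}(iii), has already been set up, so the statement reduces to a direct invocation of Keel--Mori together with a short descent argument for properness of the coarse algebraic space.
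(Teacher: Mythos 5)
Your proof is correct and follows essentially the same route as the paper: both rest on the Keel--Mori theorem (applied to the separated, finite-type Deligne--Mumford stack $\JgP^{\E,\sigma}$, whose finite inertia you rightly extract from the unramified and proper diagonal) to produce the coarse space as a separated algebraic space. The only, harmless, difference is in the last step: the paper re-establishes properness of the coarse space via the valuative criterion, citing Esteves' results, whereas you descend properness directly from Proposition \ref{P:univJacProp}(iii) along the proper surjective coarse moduli map, which is a slightly cleaner bookkeeping of the same facts.
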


\begin{proof}
The fact that $\JgP^{\E,\sigma}$ admits a separated coarse moduli space is a direct consequence of Proposition \ref{P:univJacProp} using Keel-Mori's result (see \cite[Corollary 1.3]{KM}), which asserts that (separated) Deligne-Mumford stacks admit  (separated) algebraic spaces which are coarse moduli spaces for the stack. Moreover, properness follows from the valuative criterion for properness, which follows easily from \cite[Proposition 27 and Theorem 32(4)]{est1}.
\end{proof}

Let then  $\JgA^{\E,\sigma}$ be the coarse moduli space for $\JgP^{\E,\sigma}$, which is proper by what we just said. In order to prove that $\JgA^{\E,\sigma}$ is projective we will use Koll\'ar's semipositivity criterion (see \cite{kollar}).
Let us start by recalling the definition of semipositive vector bundle.

\begin{defi}\cite[Definition 3.3]{kollar}
Let $V$ be a locally free sheaf on a scheme (or algebraic space) X. Then $V$ is {\em semipositive} if for all proper maps $f:C\to X$ from a curve $C$, any quotient bundle of $f^*V$
has non-negative degree.
\end{defi}

The second ingredient is that of a finite classifying map. Let $W$ be a vector bundle on $X$ of rank $w$ with structure group $\rho:G\to GL_w$.
 Given a quotient bundle $Q$ of $W$ of rank $q$, the {\em classifying map}  is the set theoretical map on the closed points of $X$, defined as
$$u:X\to Gr(w,q)/G$$
where $Gr(w,q)$ denotes the Grassmannian of $q$-dimensional quotients of a fixed $w$-dimen\-sio\-nal space.
Then we say that the classifying map $u$ is finite if its fibers are finite and all points of the image have finite stabilizer.

The following is Koll\'ar's Ampleness Lemma.

\begin{fact}{\cite[Lemma 3.9]{kollar}}\label{F:ampleness}
Let $X$ be a proper algebraic space, $W$ a semipositive vector bundle on $X$ of rank $w$ and with structure group $G$ and let $Q$ be a quotient vector bundle of $W$ with rank $q$. Assume that $W$ satisfies one of the several technical conditions listed in \cite[Sec. 3]{kollar} and that the classifying map is finite. Then $\det Q$ is ample and, in particular, $X$ is projective.
\end{fact}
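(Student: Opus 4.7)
The plan is to verify ampleness of $\det Q$ on the proper algebraic space $X$ via a numerical (Seshadri/Nakai--Moishezon) criterion, reducing to showing strict positivity of $\deg(f^{\ast}\det Q)$ on every non-constant proper morphism $f:C\to X$ from a smooth projective curve, and then upgrading pointwise positivity to ampleness via the uniformity built into the hypotheses. For proper algebraic spaces one may either invoke the Nakai--Moishezon criterion directly on integral subspaces (and ultimately on curves, using Chow's lemma for algebraic spaces in the sense of Knutson) or pass to a scheme cover and descend. Either way the problem reduces to controlling degrees of $\det Q$ restricted to curves.

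The first step is purely formal: since $W$ is semipositive, $f^{\ast}W$ is semipositive on $C$ for every $f:C\to X$, so every quotient of $f^{\ast}W$ has non-negative degree; in particular $\deg(f^{\ast}\det Q)\geq 0$ automatically. The real content is strict positivity. Assume for contradiction that $\deg(f^{\ast}\det Q)=0$ for some non-constant $f:C\to X$. The quotient $W\twoheadrightarrow Q$ is classified by the morphism $u:X\to Gr(w,q)/G$ to the stacky quotient, and pulling back along $f$ produces $u\circ f:C\to Gr(w,q)/G$. The goal of the second step is to show that, under the technical hypotheses from \cite[Sec.~3]{kollar} (which ensure that $W$ arises from a $G$-representation $\rho:G\to GL_w$ satisfying a suitable polystability/semistability condition, so that a Pl\"ucker-type line bundle on $Gr(w,q)/G$ descends appropriately), the combination ``$f^{\ast}W$ semipositive with a quotient of determinant-degree zero'' forces a Harder--Narasimhan/Grauert--M\"ulich style rigidification: the quotient $f^{\ast}W\twoheadrightarrow f^{\ast}Q$ must be constant modulo the $G$-action, i.e.\ $u\circ f$ factors through a single point of $Gr(w,q)/G$.

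Step three is the contradiction. If $u\circ f$ is constant and $f$ is non-constant, then the fiber of $u$ over the corresponding point of $Gr(w,q)/G$ contains the positive-dimensional image $f(C)$, contradicting the finiteness of $u$ (which includes finiteness of fibers and finiteness of stabilizers on the image). This yields $\deg(f^{\ast}\det Q)>0$ for every non-constant $f$. To pass from pointwise positivity to ampleness one uses that the technical conditions on $W$, together with finiteness of $u$, give a \emph{uniform} lower bound on $\deg(f^{\ast}\det Q)$ in terms of a Seshadri-type constant depending only on $(W,\rho,G)$; this is what promotes the numerical positivity to ampleness of $\det Q$, and hence projectivity of $X$.

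The main obstacle, as is well-known, is precisely step two: the rigidification that turns ``degree-zero quotient of a semipositive bundle on $C$'' into ``constancy of the classifying map along $C$''. This is the technical crux of Kollár's argument, and the elaborate list of hypotheses on $W$ in \cite[Sec.~3]{kollar} --- semistability of the relevant Hilbert point, appropriate polystability of the representation $\rho$, boundedness conditions, and the precise compatibility of the quotient with the $G$-structure --- is exactly what is engineered to make this rigidification go through. Carrying it out from scratch would essentially amount to redoing Kollár's structural analysis of semipositive $G$-bundles; in this paper the lemma is legitimately invoked as a black box.
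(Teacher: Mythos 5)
The paper does not prove this statement at all: it is quoted verbatim as Koll\'ar's Ampleness Lemma and imported as a \emph{Fact} with the citation \cite[Lemma 3.9]{kollar}, so there is no in-paper argument to compare against. Your closing remark --- that the lemma is legitimately invoked as a black box --- is therefore exactly what the paper does, and if that had been your whole answer it would have matched the paper's treatment.

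As an actual proof attempt, however, the proposal has genuine gaps beyond the one you flag yourself. First, your reduction in step one is not valid as stated: strict positivity of $\deg(f^{*}\det Q)$ on every curve is \emph{not} sufficient for ampleness (strictly nef line bundles need not be ample, by Mumford's classical example), and Nakai--Moishezon requires positivity of top self-intersections on subvarieties of every dimension, not just curves. You gesture at repairing this with a uniform Seshadri-type lower bound, but that bound is asserted without any argument, and producing it is comparable in difficulty to the lemma itself; it is also not how Koll\'ar proceeds --- his argument runs through descending a power of $\det Q$ along the classifying map to a GIT quotient of the Grassmannian, where ampleness is obtained from the finiteness of $u$ and the Pl\"ucker polarization, with semipositivity controlling the discrepancy, rather than through a curve-by-curve numerical criterion. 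Second, the rigidification in your step two --- the heart of the matter --- is explicitly deferred. So the proposal is a plausible but partly inaccurate outline rather than a proof; since the paper itself treats the statement as an external input, the correct move here is simply the citation, not a reconstruction.
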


We will now apply Koll\'ar's Ampleness Lemma to show that there is an ample line bundle on $\JgA^{\E,\sigma}$.

\begin{prop} \label{P:positivity}
Let $\mathcal L$ be a very ample line bundle on $\MgPx$, $\E$ a $d$-polarisation and let $(f:X\to T,\sigma_i:T\to X,F)$ a section of $\JgP^{\E,\sigma}$ over $T$. Denote by $ L_f$ the line bundle that $\mathcal L$ induces on $X$. Then, there is an integer $k$ such that
\begin{enumerate}
\item $R^1(f_*(F\otimes  L_f^j))=0$ for $j\geq k-1$;
\item $f^*f_*( F\otimes{ L}_f^j)\to F\otimes   L_f^j$ is surjective for $j\geq k$.
\end{enumerate}
Moreover, for $K:=Ker[f^*(f_*(F\otimes  L_f^k))\to F\otimes  L_f^k]$, there is an integer $m$ such that $f^*(f_*(K\otimes  L_f^j))\to K\otimes  L_f^j$ is surjective for $j\geq m$.
\end{prop}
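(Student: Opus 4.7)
The plan is to reduce the three statements to relative Serre vanishing applied to the $T$-flat sheaves $F\otimes L_f^j$ and $K\otimes L_f^j$, with as crucial input the boundedness of the family of $(\E,\sigma)$-quasistable sheaves provided by Remark \ref{R:bounded}. Note first that since $\mathcal L$ is very ample on $\MgPx\cong \CgP$, the line bundle $L_f$ induced on $X$ via the moduli morphism $X\to\CgP$ associated to $(f,\sigma_i)$ is relatively very ample over $T$.

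To prove (1) and (2), I would start from the observation that $F\otimes L_f^j$ is $T$-flat for every $j$. By boundedness, the family of geometric fibres $F(t)$, $t\in T$, is bounded, so applying Serre's theorem uniformly to this bounded family with respect to the relatively ample $L_f$ yields an integer $k$ such that, for every $t\in T$, $H^1(X_t, F(t)\otimes L_f(t)^j)=0$ for all $j\geq k-1$ and $F(t)\otimes L_f(t)^j$ is globally generated for all $j\geq k$. By cohomology and base change (Grauert's theorem), the first property gives (1) and moreover implies that $f_*(F\otimes L_f^j)$ is locally free on $T$ and compatible with arbitrary base change for $j\geq k-1$; combined with fibrewise global generation, this lifts to the surjectivity of $f^*f_*(F\otimes L_f^j)\to F\otimes L_f^j$ for $j\geq k$, proving (2).

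For the last assertion, the key preliminary step is to show that $K$ is itself a $T$-flat coherent sheaf on $X$. Indeed, by the argument above $f_*(F\otimes L_f^k)$ is locally free on $T$, so $f^*f_*(F\otimes L_f^k)$ is $T$-flat; since $F\otimes L_f^k$ is also $T$-flat, the defining exact sequence
\[
0\to K\to f^*f_*(F\otimes L_f^k)\to F\otimes L_f^k\to 0
\]
together with the long exact sequence of $\operatorname{Tor}$ against residue fields of $T$ forces $K$ to be $T$-flat. The family of such kernels $K$ is again bounded: it is determined by the bounded family of $F$'s and by the locally free sheaves $f_*(F\otimes L_f^k)$, whose rank is controlled by the Hilbert polynomial of $F\otimes L_f^k$. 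Re-running the Serre vanishing and cohomology and base change argument of the previous paragraph with $K$ in place of $F$ then produces the required uniform integer $m$.

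The step I expect to require the most care is the $T$-flatness of $K$ together with the ensuing boundedness of the family of kernels, since both are what allow the Serre vanishing argument to be iterated uniformly; the remainder of the proof is a routine application of Serre vanishing, with the uniformity ultimately resting on the boundedness of $(\E,\sigma)$-quasistable sheaves provided by Remark \ref{R:bounded}. This uniformity is precisely what will be needed when the present proposition is fed into Koll\'ar's ampleness lemma on the proper coarse moduli space $\JgA^{\E,\sigma}$.
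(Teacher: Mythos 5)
Your proof is correct and follows the same overall skeleton as the paper's --- reduce (1) and (2) to fibrewise $H^1$-vanishing and global generation, and extract a uniform constant from boundedness --- but the two arguments diverge at the key fibrewise step. You treat that step as a black-box application of Serre vanishing to a bounded family, whereas the paper makes it explicit and, in doing so, shows where the polarisation actually enters: by Serre duality $h^1(X,F\otimes L_X^j)=h^0(X,F^*\otimes L_X^{-j})$, and since there are only finitely many topological types of curves in $\MgP$ and the stability inequalities imposed by $\E$ give only finitely many lower bounds of the form $\deg_Z F\geq n_Z$, one can choose $j$ large enough that $F^*\otimes L_X^{-j}$ has degree at most $-2$ on every subcurve of every fibre, killing $h^0$; global generation then follows from Riemann--Roch by comparing $h^0(F\otimes L^j)$ with $h^0(F\otimes L^j(-\mathfrak m_p))$. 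Your route buys brevity and avoids the slightly delicate use of Serre duality for torsion-free sheaves on singular (Gorenstein) curves, at the cost of hiding the role of $\E$ inside the boundedness statement; the paper's route is more self-contained and makes the uniform constant effectively computable from the polarisation data. On the other hand, your treatment of the final assertion is more careful than the paper's: the $T$-flatness of $K$ via the $\operatorname{Tor}$ long exact sequence and the boundedness of the resulting family of kernels are precisely the points the paper dismisses with ``follows easily using again the boundedness of $\MgP$'', and spelling them out is a genuine improvement. One caveat: make sure the integer $k$ you produce is independent of the particular section $(f,\sigma_i,F)$ and of $T$ (as the application to Koll\'ar's lemma requires); this needs boundedness of quasistable sheaves over the whole finite-type stack $\JgP^{\E,\sigma}$, not merely over a fixed Noetherian base as in Remark \ref{R:bounded}, which is exactly what the paper's finiteness-of-topological-types argument supplies.
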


\begin{proof}

From \cite[Cor. 1.5]{knudsen}, $(1)$ and $(2)$ follow if we prove that, for each section $(X, p_i, F)$ of $\JgP^{\E,\sigma}$ over an algebraically closed field $k$, we have that
\begin{itemize}
\item $H^1(X,F\otimes L_X^j)=0$,
\item $F\otimes L_X^j$ is generated by its global sections,
\end{itemize}
where $L_X$ denotes the line bundle induced by $\mathcal L$ on $X\rightarrow k$.
Now, by Serre duality, we have that
$$H^1(X,F\otimes L_X^j)=H^0(X, F^*\otimes L^{-j}),$$
so we must show that there is an integer $m$ such that $H^0(X, F^*\otimes L^{-j})=0$ for all $j\geq m$ and for all sections $(X,p_i,F)$ of $\JgP^{\E,\sigma}$ over an algebraically closed field $k$.

For all $g$ and $n=|A|$, there is a finite number of curves in $\MgP$, so by \ref{E:stabCond}, the stability conditions imposed by the polarisation $\E$ give a finite number of inequalities of the form
$\deg_ZF\geq n_{Z}$, where $n_{Z}$ is a number depending only on $Z\subset X$.
This implies that, for $j$ big enough, we can control the degree of $F^*\otimes L^{-j}$ to be negative on all subcurves $Z$ of $X$ (we must use here again the fact that there are a finite number of topological types of curves in $\MgP$). Take $m$ such that the degree of $F^*\otimes L^{-j}$ on each subcurve $Z$ of a curve $X$ in $\overline M_{g,A}$ is at most equal to $-2$ for all $j\geq k$.
Then we have that $h^0(X, F^*\otimes L^{-j})=0=h^1(X,F\otimes L_X^j)$ for all curve $X$ in $\overline M_{g,A}$ and, moreover, that $F\otimes L_X^j$ is generated by its global sections. In fact, by Riemann Roch, for any such geometric point $p\in X$,
$h^0(X, F\otimes L^{j}(-\mathfrak m_p))<h^0(X, F\otimes L^{j})$, where $\mathfrak m_p$ denotes the ideal sheaf of $p$ in $X$. This is enough to get $(1)$ and $(2)$.

Now, the second part follows easily using again the boundedness of $\MgP$ and we conclude.

\end{proof}

From Proposition \ref{P:positivity} above and Koll\'ar's results in \cite[Sec. 6]{kollar} we get the following result.

\begin{cor} \label{C:positivity}
Notation as above. Then the bundle
 $W:=(f_*(F\otimes L^k)\otimes f_*(L^m))^*$ is semipositive. Moreover, the dual of the kernel of the multiplication map
 $$\mathcal Q_f:= {\rm ker}[f_*(F\otimes L^k)\otimes f_*(L^m)\to f_*(F\otimes L^{k+m})]^*$$ is a quotient of $W$ such that $\det \mathcal Q_f$ is relatively ample over $X\to S$.
 \end{cor}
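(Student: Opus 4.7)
The plan is to feed the cohomological data provided by Proposition \ref{P:positivity} into Kollár's semipositivity machinery from Section 6 of \cite{kollar}.

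First I would observe that, since $R^1 f_*(F\otimes L_f^j)=0$ for $j\geq k-1$ by Proposition \ref{P:positivity}(1), cohomology and base change force $f_*(F\otimes L_f^k)$, $f_*(L_f^m)$, and $f_*(F\otimes L_f^{k+m})$ to be locally free on $T$ with formation commuting with arbitrary base change. In particular $W$ is a genuine vector bundle, and the multiplication map
\begin{equation*}
\mu_f : f_*(F\otimes L_f^k)\otimes f_*(L_f^m)\longrightarrow f_*(F\otimes L_f^{k+m})
\end{equation*}
is well defined and behaves well under pullback. Dualizing the inclusion $\ker\mu_f\hookrightarrow f_*(F\otimes L_f^k)\otimes f_*(L_f^m)$ immediately exhibits $\mathcal{Q}_f$ as a quotient of $W$, which gives the second half of the statement.

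The main content is the semipositivity of $W$. Here I would invoke the theorems of \cite[Sec.~6]{kollar}, whose hypotheses are essentially parts (1) and (2) of Proposition \ref{P:positivity}: these ensure that the pushforwards in question commute with base change along any proper morphism from a curve $C\to T$ and that the relevant sheaves on $X$ become generated by their global sections after sufficient twisting. Both properties, combined with the fact that $L_f$ descends from a very ample bundle on the proper stack $\MgPx$, force every quotient of $f^*W$ on $C$ to have nonnegative degree, which is precisely Kollár's definition of semipositivity.

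For the relative ampleness of $\det\mathcal{Q}_f$, I would use the second part of Proposition \ref{P:positivity} applied both to $F\otimes L_f^k$ and to the kernel $K$: the induced surjections $f^*f_*(F\otimes L_f^k)\twoheadrightarrow F\otimes L_f^k$ and $f^*f_*(K\otimes L_f^m)\twoheadrightarrow K\otimes L_f^m$ can be spliced together to produce a classifying morphism into a relative Grassmannian over $T$, the tautological Plücker line bundle of which pulls back to a positive power of $\det\mathcal{Q}_f$. I expect the main obstacle to lie in verifying that the precise technical hypotheses of \cite[Sec.~6]{kollar}, stated there for families of canonically polarized varieties, translate verbatim to families of torsion-free rank-$1$ simple sheaves on reduced curves; this reduction should follow from the boundedness of Remark \ref{R:bounded} together with the vanishings supplied by Proposition \ref{P:positivity}, so that no genuinely new positivity input is required beyond what has already been established.
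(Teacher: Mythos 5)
Your proposal follows essentially the same route as the paper: both feed the vanishing, global generation, and surjectivity statements of Proposition \ref{P:positivity} into Koll\'ar's semipositivity results (\cite[Prop. 6.8, Cor. 6.9]{kollar}) and conclude via his Ampleness Lemma (Fact \ref{F:ampleness}). The one point you gloss over is that ampleness of $\det\mathcal Q_f$ requires the classifying map to the Grassmannian quotient to be \emph{finite}, not merely that the Pl\"ucker bundle pulls back to a power of $\det\mathcal Q_f$; the paper obtains this by running Proposition \ref{P:positivity} through the proof of \cite[Theorem 6.4]{kollar}.
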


 \begin{proof}
 The fact that $(f_*(F\otimes L^k)\otimes f_*(L^m))^*$ is semipositive and that it satisfies the technical conditions on Fact \ref{F:ampleness} follows from Proposition \ref{P:positivity} above by applying the same reasoning as in \cite[Prop. 6.8]{kollar} and in \cite[Cor. 6.9]{kollar}, respectively.
 The fact that the classifying map associated to the quotient $W\twoheadrightarrow Q_f$ is finite follows by applying Proposition \ref{P:positivity} in the proof of Theorem 6.4 in loc. cit..The result is now a consequence of Fact \ref{F:ampleness}.
 \end{proof}

 It is clear that the line bundle $\det \mathcal Q_f$ defined on Corollary \ref{C:positivity} for each section $(f:X\to T,\sigma_i:T\to X,F)$ of $\JgP^{\E,\sigma}$ over $T$ glues to give a relatively ample line bundle $\det \mathcal Q$ on $\JgP^{\E,\sigma}\to \MgP$. This line bundle won't necessarily descend to $\JgA^{\E,\sigma}$, but a power of it will. We thus conclude.

 \begin{cor}
 Notation as above. Then $\JgA^{\E,\sigma}$ is a projective coarse moduli space for $\JgP^{\E,\sigma}$.
 \end{cor}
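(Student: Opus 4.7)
The plan is to exploit the work already done in Corollary \ref{C:positivity}, where a relatively ample line bundle $\det \mathcal Q$ was constructed on the stack $\JgP^{\E,\sigma}$ over $\MgP$. The remaining content of the corollary is to pass from relative ampleness on the stack to projectivity of the coarse moduli space, so my proof would consist of a descent step followed by a standard relative projectivity argument.

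First I would address the descent. The line bundle $\det \mathcal Q$ is a priori only defined on the stack $\JgP^{\E,\sigma}$ and carries a nontrivial action of the automorphism groups of the objects it parametrizes. Since $\JgP^{\E,\sigma}$ is a Deligne--Mumford stack with finite automorphism groups (by Proposition \ref{P:univJacProp}) whose coarse moduli space $\JgA^{\E,\sigma}$ is already known to exist as a proper algebraic space (by the Keel--Mori argument invoked earlier), a standard application of Kempf's descent lemma (equivalently, the fact that for tame Deligne--Mumford stacks a line bundle on the stack descends to its coarse moduli space if and only if all stabilizer actions on fibers are trivial) guarantees that a sufficiently divisible tensor power $(\det \mathcal Q)^{\otimes N}$ descends to a line bundle $M$ on $\JgA^{\E,\sigma}$.

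Next I would promote $M$ to an ample line bundle. By construction, $\det \mathcal Q$ is relatively ample for the morphism $\pi:\JgP^{\E,\sigma}\to \MgP$, hence its descent $M$ is relatively ample for the induced morphism $\overline \pi: \JgA^{\E,\sigma}\to \ov M_{g,A}$, where $\ov M_{g,A}$ denotes the coarse moduli space of $\MgP$. Since $\ov M_{g,A}$ is a projective variety (a classical result of Knudsen), we can fix a very ample line bundle $H$ on $\ov M_{g,A}$; then for $k\gg 0$ the line bundle $M\otimes \overline\pi^* H^{\otimes k}$ is ample on $\JgA^{\E,\sigma}$ by the standard fact that tensoring a relatively ample line bundle with a sufficiently large pullback of an ample line bundle from a projective base yields an ample line bundle on the total space. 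Since $\JgA^{\E,\sigma}$ is a proper algebraic space carrying an ample line bundle, it is a projective scheme, which is precisely the statement to be proved.

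The only genuinely non-routine step is the descent of $(\det \mathcal Q)^{\otimes N}$ to the coarse moduli space, since one must check that the residual stabilizer actions on the fibers can be killed by a uniform power. This is guaranteed by the uniform bound on the orders of automorphism groups of simple torsion-free rank-$1$ sheaves on stable pointed curves of genus $g$ with $n$ marked points, so a single $N$ works globally. Everything else in the argument is a direct combination of the results developed in Section \ref{S:coarse} with classical relative ampleness criteria.
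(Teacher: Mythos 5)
Your proposal is correct and follows essentially the same route as the paper: the paper's entire argument for this corollary is the remark that the relatively ample bundle $\det\mathcal Q$ on $\JgP^{\E,\sigma}\to\MgP$ has a power descending to $\JgA^{\E,\sigma}$, followed by ``we thus conclude.'' Your write-up simply makes explicit the two steps the paper leaves implicit --- the Kempf-type descent of a uniform power to the coarse space, and the passage from relative ampleness over the projective base $\ov M_{g,A}$ to absolute ampleness by twisting with a pullback --- so it is, if anything, a more complete version of the same proof.
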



\subsection{Explicit polarisations on $\MgP$}\label{S:expol}
According to Definition \ref{D:polarisation} above, in order to define a $d$-polarisation on $\MgP$, we must give a vector bundle $\E$ on $\CgP$ of a certain rank $r>0$ and degree $r(d-g+1)$. Actually, in view of Proposition \ref{P:tensor} and for our convenience, we will give in general a $\mathbb Q$-vector bundle instead, where with $\mathbb Q$-vector bundle we simply mean a direct sum of $\mathbb Q$-line bundles here.

Let then $s; a_i, i\in A $ and $\alpha_{(b,B)}$ as $(b,B)$ ranges through the admissible types of nodes in the boundary of $\MgP$ be rational numbers such that
$$\frac{s(2g-2)+\sum_{i\in A} a_i}{r}\in \Z.$$
Let $\pi:\CgP\to\MgP$ be the natural forgetful functor and consider the $\mathbb Q$-vector bundle  in $\CgP$ defined by using the relative dualizing sheaf $\omega={\omega}_{\pi}$, the sections $\sigma_i$ of $\pi$ and the boundary divisors $\da$ as follows:
\begin{equation}\label{D:expol}
\E=\left (\omega^s(\sum_{i\in A}a_i\sigma_i )\otimes \sum_{(b,B)}\alpha_{(b,B)}\da\right )\oplus \mathcal O_{\CgP}^{r-1}.
\end{equation}
Then $\rk \E=r$ and $\deg \E=s(2g-2)+\sum_{i\in A}a_i$, which implies that
$$d=\frac{s(2g-2)+\sum_{i\in A} a_i}{r}+g-1$$
which, by hypothesis, is an integer number.
Notice also that, according to Theorem \ref{T:arbcorn} above, considering vector bundles of the form introduced in \eqref{D:expol} is not really restrictive.


Fix a polarisation $\E$ on $\MgP$ and a section $\sigma$ of $\pi$ as above. Given a section of $\MgP$ over an algebraically closed field $k$, i.e., an $A$-pointed (stable) $k$-curve $(X;p_1,\dots,p_{|A|})$, let $E$ be the polarisation on $X$ induced by $\E$; i.e. $E:=\E_{|X}$.
Write $E$ as
$$E=\left (\omega_X^s(\sum_{i\in A}a_ip_i)\otimes \sum_{(b,B)}\alpha_{(b,B)}\daX\right )\oplus \mathcal O_X^{r-1}$$
and denote by $p\in X$ the image of the section $\sigma$ on $X$.
Let us describe the stability condition induced by $E$ (and $(E,p)$) on sheaves over $X$.


Given $Y\subseteq X$, set
$$E_Y:=E_{|Y}=\left({\omega_{X}^s}_{|Y}(\sum_{p_i\in Y}a_ip_i)\otimes \sum_{(b,B)}\alpha_{(b,B)}\daX_{|Y}\right)\oplus \mathcal O_Y^{r-1}.$$
Then, we have that
$$q_Y^E=\frac{\deg E_Y}{r}+\frac{w_Y}{2}=\frac{sw_Y+\sum_{p_i\in Y}a_i+\sum_{(b,B)}\alpha_{(b,B)}\daY}{r}+\frac{w_Y}{2}$$
where $w_Y:=\deg_Y\omega_X=2g_Y-2+k_Y$ and $\daY:=\deg_Y(\daX)$.

Then Definition \ref{semiquasistabledef} in this case can be rephrased by saying that a torsion-free sheaf $I$ on $X$ is $E$-(semi)stable if $\deg I=\frac{s(2g-2)+\sum_{i\in A} a_i}{r}+g-1$ and if given $\emptyset \neq Y\subsetneq X$,
\begin{equation}
\label{semistabledefdegree}\deg I_Y(\geq)> q^E_Y-\frac{k_Y}{2}.
\end{equation}
Similarly, $I$ is said to be $(E,p)$-quasistable if it is $E$-semistable and if inequality (\ref{semistabledefdegree}) above is strict whenever $p\in Y$.

\begin{rem}\label{R:stab-integer}
Let $\E=\left (\omega^s(\sum_{p\in P}a_p\sigma_p)\otimes \sum_{(b,B)}\alpha_{(b,B)}\da\right )\oplus \mathcal O_{\CgP}^{r-1}$ be a $d$-polarisation on $\MgP$. Then, for any integer $m\in \Z$, one checks immediately that the $\mathbb Q$-vector bundle
$\E_m=\left (\omega^{ms}(m\sum_{i\in A}a_i\sigma_i)\otimes m\sum_{(b,B)}\alpha_{(b,B)}\da\right )\oplus \mathcal O_{\CgP}^{mr-1}$
is again a $d$-polarisation on $\MgP$ and defines exactly the same stability conditions. We can thus assume that all the constants $s$, $a_i$ and $\alpha_{(b,B)}$ are integers and divisible by a certain integer.
\end{rem}

While the degrees of the relative dualizing sheaves and of the punctures on subcurves $Y$ of $X$ are clearly computed as we set above, for computing $\daY$ we will need a little argument, that we will describe in \ref{S:boundary}.

\subsubsection{Boundary divisors}\label{S:boundary}
Let $1\leq b\leq g$ and $B\subseteq A$ be such that if $b=0$ (resp $b=g$) then $|B|\geq 2$ (resp. $|B^c|\geq 2$).
Let $(X;p_i)$ be an $A$-pointed (stable) curve of genus $g$ and let $p$ be a node of $X$ of type $(b,B)$. The fiber of $\pi_x:\MgPx\to \MgP$ over $[(X;p_i)]$ can be seen as a curve over $X$ (obtained by varying the extra section $x$), which we will denote by $[(X;p_i)]_{x\in X}$. Let $C$ be an irreducible subcurve of $X$ and consider the restriction of  $[(X;p_i)]_{x\in X}$ to $C$: $[(X;p_i)]_{x\in C}$.
In order to compute the degree of $\da$ in $C$ recall the following relation of divisor classes on $\MgP$:
\begin{equation}\label{E:pull-bound}
\pi_x^*(\da)=\da + \delta_{b,B\cup\{x\}}
\end{equation}
Start by observing  that if $C$ has no nodes of type $(b,B)$, then the degree of $\da$ on $[(X;p_i)]_{x\in C}$ is equal to zero.

Suppose now that $C$ has a node of type $(b,B)$; recall that it means that the normalization of $X$ at $p$ is given by two disconnected pointed curves $X_p$ and $X_p'$ of genera $b$ and $g-b$ and markings $B$ and $B^c$, respectively.
Then it is easy to see that
\begin{equation}\label{E:deg-comp}
\deg_{C}(\da)=
\begin{cases}
1 & \text{if $p\in C$ and $C\subseteq X_p$,}
\\
-1 & \text{if $p\in C$ and $C\subseteq X_p'$.}
\end{cases}
\end{equation}
In fact, if $C\subseteq X_p$, then $[(X;p_i)]_{x\in C}$ is a curve contained in $\mathcal D_{b,B\cup\{x\}}$ and meets $\Da$ in the point $x=p$.  Arguing as in \cite[Lemma1]{arbcorn}, we conclude that $\deg_{[(X;p_i)]_{x\in C}}\da=1$. In the case when $C\subseteq X_p'$, then  $[(X;p_i)]_{x\in C}$ is a curve contained in $\Da$, so again arguing as in \cite[Lemma1]{arbcorn} we conclude that $\deg_{[(X;p_i)]_{x\in C}}\da=-1$. In fact, simply by base-change, the degree on $[(X;p_i)]_{x\in C}$ of the pullback of $\da$ via $\pi_x$, $\pi_x^*(\da)$, must be equal to zero since the image of the curve $[(X;p_i)]_{x\in C}$  on $\MgP$ is just a point on $\MgP$. Therefore, we get from \eqref{E:pull-bound} that
$$\deg_{[(X;p_i)]_{x\in C}}\da=- \deg_{[(X;p_i)]_{x\in C}}\delta_{b, B\cup\{x\}}.$$
We conclude with the following

\begin{prop}
Let $(X;p_i)$ be an $A$-pointed (stable) curve of genus $g$ and let $Z\subseteq X$ be a subcurve of $X$. Then
$$\deg_Z\da=\sharp\{p\in Z \cap N_{b,B}(X) \mbox{ and } Z\subseteq X_p\}-\sharp\{p\in Z\cap N_{b,B}(X) \mbox{ and } Z\subseteq X_p'\},$$ where $N_{b,B}(X)$ denotes the set of nodes of type $(b,B)$ of $X$.
\end{prop}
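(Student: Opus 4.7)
The plan is to reduce the claim to the per-component case treated in the paragraph immediately preceding the statement, and then reorganize the resulting sum by nodes of type $(b,B)$.

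First, I would decompose $Z = C_1 \cup \dots \cup C_m$ into its irreducible components. Since the degree of the line bundle associated to $\da$ restricted to $X$ is additive over the irreducible components of the subscheme on which it is evaluated, one has
\begin{equation*}
\deg_Z \da \;=\; \sum_{j=1}^m \deg_{C_j} \da.
\end{equation*}
Each summand on the right is now an instance of the single-irreducible-component computation established just above the proposition: nodes of $X$ of type $(b,B)$ lying on $C_j$ contribute $+1$ or $-1$ to $\deg_{C_j}\da$ according as $C_j\subseteq X_p$ or $C_j\subseteq X_p'$, and all other points contribute $0$. Although the preceding paragraph isolates one such node at a time, the extension to a component carrying several nodes of type $(b,B)$ is immediate, since the contributions appear as local intersection numbers at distinct points of $C_j$ and hence simply add.

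Interchanging the order of summation over $j$ and over $p$ then yields
\begin{equation*}
\deg_Z\da \;=\; \sum_{p\in N_{b,B}(X)} \bigl(\mathbf{1}_{C_p\subseteq Z} - \mathbf{1}_{C_p'\subseteq Z}\bigr),
\end{equation*}
where $C_p$ and $C_p'$ denote the unique irreducible components of $X$ through $p$ contained in $X_p$ and $X_p'$ respectively. These are well defined because a separating node of $X$ must have its two branches on two distinct irreducible components (a self-node of a single irreducible component cannot disconnect $X$ upon removal). It remains to identify this sum with the right-hand side of the displayed formula.

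The last step, which I expect to be the subtle point, is to convert ``$C_p\subseteq Z$ and $C_p'\not\subseteq Z$'' into ``$p\in Z$ and $Z\subseteq X_p$'' (and symmetrically for the primed condition), after which the result is immediate. Since in $X$ the only passage between $X_p$ and $X_p'$ is through the node $p$, connectedness of $Z$ ensures that if $Z$ contains $C_p$ but not $C_p'$ then it cannot contain any component of $X_p'$, so $Z\subseteq X_p$; conversely, $Z\subseteq X_p$ with $p\in Z$ forces $C_p\subseteq Z$ and excludes $C_p'\subseteq Z$. Nodes $p\notin Z$ satisfy $C_p,C_p'\not\subseteq Z$ automatically and contribute nothing to either side. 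The main obstacle is therefore this combinatorial matching together with the implicit appeal to connectedness of $Z$ that underlies the statement as written; the rest is a bookkeeping exercise building directly on the single-node computation already in the text.
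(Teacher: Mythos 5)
Your argument is correct and is essentially the paper's own proof, which consists of the single sentence that the result is ``an easy adaptation of formula \eqref{E:deg-comp} above''; you have simply written out the component-by-component summation and the reindexing by nodes that this adaptation requires. Your remark that the final matching of conditions uses connectedness of $Z$ is a genuine point the paper's one-line proof glosses over: for a disconnected $Z$ containing the branch $C_p\subseteq X_p$ together with a component of $X_p'$ not through $p$, the left-hand side picks up a $+1$ while the right-hand side of the displayed formula counts $0$, so the statement should be read for connected $Z$ (or summed over connected components).
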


\begin{proof}
The result is an easy adaptation of formula \eqref{E:deg-comp} above to the general case.
\end{proof}

\subsubsection{Canonical polarisations on $\MgP$}\label{S:canonical}

Given an integer $d$ and a tuple of numbers $\un a=(a_1,\dots,a_{|A|})$, consider the canonical polarisation associated to $(d,\un a)$ to be
$$\E^{d,\un a}:=\left( \omega\left(\sum_{i\in A}a_i\sigma_i\right)\right)^{d-g+1}\oplus \O^{\oplus( 2g-3+\sum_{i\in A}a_i)}.$$
Then
$$\frac{\deg \E^{d,\un a}}{\rk \E^{d,\un a}}=\frac{(d-g+1)(2g-2+\sum_{i\in A}a_i)}{2g-2+\sum_{i\in A}a_i}=d-g+1,$$
and $\E^{d,\un a}$ is a $d$-polarisation on $\MgP$.
So $\E^{d,\un a}$-(semi)stable torsion free sheaves must have degree $d$ and, for a section $(X;p_i\in A;I)$ of $\JgP$ over an algebraically closed field $k$, inequality (\ref{semistabledefdegree}) yields that for a subcurve $Y\subseteq X$,
$$\deg I_Y\geq \frac{(d-g+1)(\omega_Y+\sum_{p_i\in Y}a_i)}{2g-2+\sum_{i\in A}a_i}+\frac{w_Y}2-\frac{k_Y}2,$$
which is easily seen to be equivalent to say that
\begin{equation}\label{canonical}
\deg I_Y+\sum_{p_i\in Y}\frac{a_i}2\geq \left(d+\sum_{i\in A}\frac{a_i}2\right)\frac{\omega_Y+\sum_{p_i\in Y}a_i}{2g-2+\sum_{i\in A}a_i}-\frac{k_Y}2.
\end{equation}
Notice that inequality (\ref{canonical}) coincides with inequality (1.6) in \cite{li} which describes GIT- semistability of nodal weighted curves with weight $\un a$ embedded in a suitable Chow scheme. For $A=\emptyset$, inequality (\ref{canonical}) coincides with the so-called ``basic inequality" discussed in \cite{cap} to describe GIT-semistability of nodal curves embedded in a suitable Hilbert scheme (see also \cite{CP} for a discussion on canonical polarisations in the non-pointed case).

\subsubsection{Varying the polarisations}

In this section we will play a little bit with the polarisations we introduced in \ref{S:expol} and explore some properties of
the compactified universal Jacobians that they yield.

The first question one may ask is: do different polarisations obtained by varying the coefficients in \eqref{D:expol} give non isomorphic moduli spaces?
We will observe that, even in the case when $A\neq \emptyset$, the answer is yes.

Start by considering the case when $A=\emptyset$.
Notice that all $d$-polarisations of the type $\E=\omega^s\oplus \mathcal O^{r-1}$ in $\Mg$ yield the canonical compactification. In fact, since $\frac s r=d-g+1$, we always get the same inequalities in \eqref{semistabledefdegree}. So, in this case, only by considering polarisations $\E$ with some of the $\alpha_{b}$'s different from $0$, we can get different compactifications.

Take $d=g-1$ and a curve $X=X_1\cup X_2$ of compact type in $D_{g_1}$, where $g_1=g_{X_1}$. Then, given a torsion-free sheaf $I$ on $X$, the inequalities \eqref{semistabledefdegree} induced by the canonical polarisation $\E^d:=\E^{d,\underline 0}$ on $I$ are
$$\deg I_{X_1}\geq q_{X_1}^{E^d}-\frac{k_{X_1}}{2}=g_{X_1}-1,$$
$$\deg I_{X_2}\geq q_{X_2}^{E^d}-\frac{k_{X_2}}{2}=g_{X_2}-1.$$
So, there are two possibilities for the multidegree of $I$ on $X$: $(g_{X_1}-1, g_{X_2})$ and $(g_{X_1}, g_{X_2}-1)$. Since $I$ has to be simple, we conclude that it has to be a line bundle and the fiber of $\overline{\mathcal J}_{g}^{\E^d,ss}$ over $[X]$ has two components, each one parametrizing sheaves of one of the allowed multidegrees.

Consider now $\E=\left (\omega^s\otimes\delta_{g_1}\right )\oplus \mathcal O^{r-1}$. Then an $\E$-semistable torsion-free sheaf on $X$ has to be a line bundle and its multidegree on $X$ must satisfy
$$\deg I_{X_1}\geq q_{X_1}^{E}-\frac{k_{X_1}}{2}=g_{X_1}-1+\frac{1}{2g-2},$$
$$\deg I_{X_2}\geq q_{X_2}^{E}-\frac{k_{X_2}}{2}=g_{X_2}-1-\frac{1}{2g-2}.$$
So, since $g\geq 2$, there is only one possibility for the multidegree of $I$ on $X$:  $(g_{X_1}, g_{X_2}-1)$.  We conclude that the fiber of $\overline{\mathcal J}_{g}^{\E,ss}$ over $[X]$ is irreducible, so $\overline{\mathcal J}_{g}^{\E^d,ss}\ncong \overline{\mathcal J}_{g}^{\E,ss}$.

By proceeding in the same way for all boundary divisors $D_b$, for $b>0$ and for all degrees $d$ we easily conclude the following

\begin{prop}
For any integer $d$, there is a $d$-polarisation $\E$ on $\overline{\mathcal M}_g$ which is is general away from the
locus of curves of two components meeting in more than one node.
\end{prop}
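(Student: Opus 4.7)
The plan is to construct a polarisation of the form $\E=\bigl(\omega^s\otimes\sum_b\alpha_b\delta_b\bigr)\oplus \mathcal O_{\CgP}^{r-1}$ with rational coefficients $(s,\alpha_b)$ chosen generically, where the sum ranges over all boundary divisor classes on $\Mg$ (both the separating $\delta_{b,\emptyset}$ for $b\geq 1$ and the non-separating $\delta_{irr}$). First I would observe that for such $\E$ one computes
\[
q_Z^E-\tfrac{k_Z}{2}\;=\;\frac{sw_Z+\sum_b\alpha_b\delta_b^Z}{r}+g_Z-1,
\]
so integrality of $\E$ at a connected subcurve $Z\subset X$ amounts to an integer-linear condition on $(s,\alpha_b)$. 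Moreover, it suffices, in order for $\E$ to be general at $X$, to ensure that $q_Z^E-k_Z/2\notin\mathbb Z$ for every \emph{proper connected} subcurve $Z\subset X$: every proper subcurve $Y\subset X$ has at least one connected component, itself a proper connected subcurve, whose non-integrality already violates integrality of $\E$ at $Y$ in the sense of Definition~\ref{def-int}.

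The next step is the key non-vanishing lemma: $w_Z\geq 1$ for every proper connected subcurve $Z$ of every stable curve $X$ in the unpointed case. When $g_Z\geq 1$ this follows at once from $k_Z\geq 1$ (as $Z$ is proper). When $g_Z=0$, $Z$ is a tree of $\mathbb P^1$'s without self-nodes, and a leaf/valence count using the stability of $X$---every rational component must carry at least three special points, all of which are nodes in the unpointed setting---forces $k_Z\geq 3$. Hence the linear functional $L_{X,Z}(s,\alpha_b):=sw_Z+\sum_b\alpha_b\delta_b^Z$ has non-zero coefficient in $s$ for every such $(X,Z)$, so each zero-level hyperplane $\{L_{X,Z}=0\}$ is a proper subspace of the coefficient space.

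The final step is the genericity argument, which I expect to be the main obstacle because of the rigid degree constraint $s(2g-2)/r=d-g+1$. This constraint cuts out an affine subspace $V$ of codimension one; inside $V$, the bad locus of coefficients for which integrality fails at some non-bad $(X,Z)$ is a countable union of affine subspaces of $V$. Only finitely many functionals $L_{X,Z}$ arise (as there are only finitely many topological types of stable genus-$g$ curves), so a rational point in the complement exists provided the trace of each hyperplane $\{L_{X,Z}=nr\}$ on $V$ has positive codimension. This non-parallelism to $V$ holds whenever some $\delta_b^Z\neq 0$. The subtle case is when all separating contributions $\delta_b^Z$ vanish, as arises in degenerate settings like $d=g-1$ (forcing $s=0$) combined with $X$ having no separating nodes (for instance, a cycle of three components meeting pairwise in single nodes). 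To handle this I would exploit the $\delta_{irr}$ summand: an analogue of the computation in Section~\ref{S:boundary} shows that $\delta_{irr}^Z$ records non-separating boundary nodes of $Z$, and for every proper connected subcurve $Z$ of a non-bad stable $X$ at least one of the $\delta_b^Z$'s or $\delta_{irr}^Z$ is non-zero. Incorporating both separating and non-separating boundary contributions thus restores the required non-parallelism for every non-bad $(X,Z)$, and a generic rational point in $V$ outside this countable union of proper affine subspaces yields the desired polarisation.
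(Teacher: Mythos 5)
Your overall strategy --- perturbing the canonical polarisation by boundary classes and choosing the coefficients generically, using finiteness of topological types --- is exactly the paper's (the paper only works out the two-component compact-type case explicitly and then says ``proceeding in the same way for all boundary divisors $D_b$, $b>0$''), and several of your intermediate observations (reduction to connected subcurves, the fact that the degree constraint freezes $s/r$ so that all the freedom lies in the boundary coefficients) are more careful than what is written there. However, your final step contains a genuine error: the class $\delta_{irr}$ cannot rescue anything, because $\deg_Z\delta_{irr}=0$ for \emph{every} subcurve $Z$ of every fibre of the universal curve. Indeed $\pi_x^*(\delta_{irr})=\delta_{irr}$ with no correction term (unlike $\pi_x^*(\da)=\da+\delta_{b,B\cup\{x\}}$, which is what produces the nonzero degrees in \S\ref{S:boundary}), and the degree of a pullback on a curve contracted by $\pi_x$ vanishes. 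So adding a multiple of $\delta_{irr}$ to $\E$ changes no stability inequality, and your claim that ``$\delta_{irr}^Z$ records non-separating boundary nodes of $Z$'' is false. Worse, the example you invoke it for --- a cycle of three components, $d=g-1$ --- genuinely cannot be handled by \emph{any} polarisation: by Theorem~\ref{T:arbcorn}, modulo pullbacks from $\Mg$ the multidegree of $\det\E$ on subcurves of a fibre is a combination of $w_Z$ and the $\delta_b^Z$, and on a fibre without separating nodes all $\delta_b^Z$ vanish, leaving $q_Z^E-\tfrac{k_Z}{2}=g_Z-1\in\Z$. That curve therefore has to lie in the excluded locus: ``two components meeting in more than one node'' must be read as two complementary \emph{subcurves} $Y,Y^c$ with $k_Y\geq 2$, i.e.\ the excluded locus is the complement of the tree-like curves, not merely of curves with two irreducible components glued at several points.

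Once the excluded locus is identified correctly, the case you were trying to patch disappears, but you still owe the statement that for a tree-like stable $X$ and every proper connected $Z$ some $\delta_b^Z\neq 0$; you assert this without proof. It does hold: every node of $Z\cap Z^c$ is then separating, so it contributes $\pm 1$ to $\delta_{b_i}^Z$ where $b_i=\min(g_{W_i},g-g_{W_i})$ and $W_i$ is the branch hanging off that node, and a cancelling pair inside a single $\delta_b^Z$ would require branches with $g_{W_i}+g_{W_j}=g$, hence $g_Z=0$ and only two boundary nodes, i.e.\ $w_Z=0$ --- which contradicts stability. This is, incidentally, the one place where your lemma $w_Z\geq 1$ actually does work; as you use it (to make the $s$-coefficient of $L_{X,Z}$ nonzero) it is a red herring, since the constraint $s(2g-2)=r(d-g+1)$ removes that degree of freedom anyway. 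With the excluded locus corrected, the $\delta_{irr}$ summand deleted, and the no-cancellation argument supplied, your genericity argument closes the proof along the same lines as the paper's.
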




Consider now the situation when $A\neq \emptyset$. Then we can still consider polarisations of the type $\E^d=\omega^s\oplus \mathcal O^{r-1}$ in $\MgP$ and get, as before, that the fiber of $\overline{\mathcal J}_{g,A}^{\E^d,ss}$ over curves $[X]$ as above may have two components, each one representing sheaves of one of the allowed multidegrees. Again, by considering a polarisation $\E$ modified by adding a boundary divisor as above we get that the fiber of $\overline{\mathcal J}_{g,A}^{\E,ss}$ over $[X]$ is irreducible as in the previous situation. This shows that the compactifications $\JgP^{\E,ss}$ do depend on $\E$, also in the case when $A\neq \emptyset$.
Notice that, however, the situation is different in this case since we can also play with the marked points and, by fixing a section $\sigma$ of $\pi:\CgP\to\MgP$, consider the fine compactifications $\JgP^{\E,\sigma}$ instead. In this case, the number of irreducible components of the different compactifications over each curve are always the same (see \cite[Theorem 3.1]{MV}). However, this fact is not enough to conclude that the different compactifications are equal, so the following question remains:
\begin{question}
Let $\E$ be a $d$-polarisation of $\MgP$ and $\sigma$ a section of $\pi:\CgP\to\MgP$. Do the compactifications $\JgP^{\E,\sigma}$ depend on $\E$ and $\sigma$?
\end{question}

\begin{rem}
The answer to the above question posed on fine compactifications over single curves rather than the universal compactifications is negative as proved in \cite[Theorem B]{MRV1}.
\end{rem}

As we have already observed in the introduction, Kass and Pagani have recently constructed in \cite{KP} a compactification of the universal Jacobian stack of degree $d=g-1$ over $\MgP$ in the case when $A\neq \emptyset$.
The construction in loc. cit. depends upon the choice of a stability parameter $\phi$ and yields a Deligne-Mumford stack $\ov{\mathcal J}_{g,A}(\phi)$, which is representable over $\MgP$. In fact it is shown in loc. cit. that the stability condition
given by $\phi$ can be non-canonically identified with Simpson's slope stability with respect to a suitable polarisation, so it suffices to use Simpson's representation result in \cite[Theorem 1.21]{simpson} to prove existence.

The stability parameter $\phi$ consists of assigning real numbers $\phi(v_i)$ to the vertices $v_i$ of the dual graphs of each stable marked curve in such a way that this assignment is compatible with the contraction of edges in the graphs (see Definitions 3 and 5 in \cite{KP}).
The moduli stack $\JgP(\phi)$ parametrizes, as in our case, torsion-free rank-1 sheaves which are $\phi$-semistable, where the condition of $\phi$-semistability coincides with ours  by replacing the numbers $q^E$ by the $\phi$'s (see (10) in loc. cit.).
Such a stability parameter is said to be non-degenerate if all $\phi$-semistable sheaves are necessarily stable.
Over the locus $\MgP^{(0)}$ parametrizing tree-like curves, the stability parameter $\phi$ is described explicitly. In particular, it follows from Lemma 2 in loc. cit. that it suffices to prescribe the values of $\phi$ over the locus of curves with two smooth components meeting at one (separating) node.

In what follows, we will show that, given a stability condition $\phi$ over $\MgP^{(0)}$,  it is always possible to choose the coefficients $s, a_i$ and $\alpha_{(b,B)}$ to form a $g-1$-polarisation $\E$  as in \eqref{D:expol} such that $\JgP^{E,ss}$ is isomorphic to $\ov{\mathcal J}_{g,A}(\phi)$ over that locus.

\begin{prop}\label{P:compare}
Let $\phi$ be a non-degenerate stability parameter over $\MgP^{(0)}$ as defined above.
Then there is a $(g-1)$-polarisation $\E$ as in \eqref{D:expol}  such that $\JgP^{E,ss}$ is isomorphic to $\ov{\mathcal J}_{g,A}(\phi)$ over $\MgP^{(0)}$.
\end{prop}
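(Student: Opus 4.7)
The plan is to exhibit a polarisation $\E$ of the form \eqref{D:expol} whose numerical invariants $q_Y^E$ agree, over $\MgP^{(0)}$, with the invariants associated to the stability parameter $\phi$ used in \cite{KP}. Since both $\phi$-semistability and $\E$-semistability are governed by inequalities of the same shape as \eqref{semistabledefdegree} (with $\phi_Y$ in place of $q_Y^E$), such a matching immediately forces $\JgP^{\E,ss}$ and $\ov{\mathcal J}_{g,A}(\phi)$ to represent the same subfunctor of $\JgP$ over the tree-like locus.

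First I would invoke Lemma~2 of \cite{KP}, which asserts that $\phi$ on $\MgP^{(0)}$ is determined by its values on the codimension-one strata parametrizing curves $X=X_1\cup X_2$ consisting of two smooth components meeting at a single separating node of type $(b,B)$. For such an $X$ one has $w_{X_1}=2b-1$ and $k_{X_1}=1$, and the computation of \S\ref{S:boundary} gives $\deg_{X_1}\delta_{b,B}=1$ together with $\deg_{X_1}\delta_{b',B'}=0$ for $(b',B')\neq (b,B)$, so
\[
q_{X_1}^E \;=\; \frac{s(2b-1)+\sum_{i\in B}a_i+\alpha_{(b,B)}}{r}+\frac{2b-1}{2}.
\]
Since the coefficients $\alpha_{(b,B)}$ vary independently over the admissible types, after fixing any admissible $s$ and $a_i$ one can solve for each $\alpha_{(b,B)}$ so that $q_{X_1}^E$ equals the prescribed $\phi$-value on the vertex of $X_1$. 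By Remark~\ref{R:stab-integer} we may then clear denominators to ensure that the resulting $\E$ is a genuine $(g-1)$-polarisation on $\MgP$ of the required form.

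The next step is to propagate the matching from the two-component case to all subcurves $Y$ of every tree-like $X$. I would argue by induction on the number of nodes of $X$: any such $Y$ is separated from its complement by a finite set of separating nodes, and on the $q^E$ side the degree formula of \S\ref{S:boundary} decomposes $q_Y^E$ as an additive sum of contributions indexed by these nodes together with the $\omega$-degree and markings supported on $Y$. On the $\phi$ side, the compatibility of $\phi$ under edge contractions (Definition~5 of \cite{KP}) provides the analogous additive decomposition. A term-by-term comparison then identifies the two semistability conditions on $\MgP^{(0)}$; since $\phi$ is assumed non-degenerate, Fact~\ref{L:nondeg} moreover guarantees that the polarisation $\E$ so constructed is general on the tree-like locus, so the identification is already between moduli of stable objects.

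The main obstacle will be the bookkeeping in the inductive step: one has to carefully match the signs $\pm 1$ appearing in $\deg_Z\delta_{b,B}$, which depend on whether $Z$ lies on the $X_p$ or the $X_p'$ side of each separating node, with the edge-contraction rules for $\phi$ in the dual-graph language of \cite{KP}. Once this translation is fixed, the inductive comparison reduces to a routine term-by-term check and yields the desired isomorphism $\JgP^{\E,ss}\cong \ov{\mathcal J}_{g,A}(\phi)$ over $\MgP^{(0)}$.
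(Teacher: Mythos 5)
Your core strategy is the same as the paper's: use Lemma~2 of \cite{KP} to reduce $\phi$ to its values on two-component curves $X_{b,B}\cup X_{g-b,B^c}$, compute $q_{X_{b,B}}^E$ there (your formula, specialising the paper's choice $\E=\mathcal O_{\CgP}(\sum\alpha_{(b,B)}\da)$, i.e.\ $s=0$, $a_i=0$, $r=1$, gives $q_{X_{b,B}}^E=\alpha_{(b,B)}+b-\tfrac12$), and solve for the $\alpha_{(b,B)}$ so that $q_{X_{b,B}}^E=\phi(v_{b,B})$. The explicit inductive propagation over all subcurves of all tree-like curves that you sketch is work the paper delegates entirely to Lemma~2 of \cite{KP}; making it explicit is harmless and arguably more careful, but it is not where the content lies.

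There is one genuine gap: the stability parameters of \cite{KP} are \emph{real} vectors, while the coefficients $s$, $a_i$, $\alpha_{(b,B)}$ in \eqref{D:expol} are required to be \emph{rational}. If some $\phi(v_{b,B})$ is irrational, then your $\alpha_{(b,B)}=\phi(v_{b,B})-b+\tfrac12$ is irrational and $\E$ is not a polarisation of the required form; Remark~\ref{R:stab-integer} cannot repair this, since it only rescales by an integer and there is no common denominator to clear for irrational coefficients. The paper closes this gap with a separate argument: the space of stability parameters is partitioned into chambers by rational hyperplanes, all parameters in a chamber yield the same compactification, and every chamber contains a rational point, so one may assume from the outset that the $\phi(v_{b,B})$ are rational. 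You need this reduction (or something equivalent) before your construction produces an admissible $\E$.
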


\begin{proof}
According to Lemma 2 in \cite{KP}, the stability parameter $\phi$ is determined by assigning real numbers $(\phi(v_{b,B}), \phi(v_{g-b, B^c}))$ to the vertices $v_{b,B}$ and $v_{g-b,B^c}$ of the dual graph of stable marked  curves with two components $X_{b,B}$ and $X_{g-b,B^c}$ meeting at one node of type $(b,B)$. So, in order to conclude, it suffices to find a $(g-1)$-polarisation on $\MgP$ such that, for all admissible type of node $(b,B)$ and given a curve $X=X_{b,B}\cup X_{g-b,B^c}$ as above, we have that $q_{X_{b,B}^E}=\phi(v_{b,B})$ (this immediately implies that also $q_{X_{g-b,B^c}^E}=\phi(v_{g-b,B^c})$). In fact, since the stability parameter is non-degenerate, all $\phi$-semistable sheaves are necessarily simple, so we conclude by comparing the stability condition yield by $\E$ and by $\phi$ on the aforementioned curves.

We start by observing that it suffices to consider the case when  $(\phi(v_{b,B}), \phi(v_{g-b, B^c}))$ are rational. In fact, the space of stability parameters is cut out by stability hyperplanes defining chambers along which all the stability parameters yield the same compactification. As the stability hyperplanes are defined by rational equations, there are rational stability parameters in all chambers, so we can pick up one of those.

Consider on $\MgP$ a $(g-1)$-polarisation of the type
$$\E=\mathcal O_{\CgP}\left ( \sum_{(b,B)}\alpha_{(b,B)}\da\right ).$$
In fact, $\E$ yields a $(g-1)$-polarisation as $\frac{\deg \E}{\rk \E}=0$.
For each admissible type of node $(b,B)$, set
$$\alpha_{(b,B)}:=
\begin{cases}\phi(v_{b,B})-b+\frac 12 &\text{ if } 1\in B\\
0 &\text{ otherwise.}
\end{cases}
$$
Then, given a two-component curve $X=X_{b,B}\cup X_{g-b, B^c}$ as above, and assuming that $1\in B$, we have that
$$q_{X_{b,B}}=\alpha_{(b,B)}+b-\frac 12=\phi(v_{b,B})-b+\frac 12+b-\frac 12=\phi(v_{b,B}).$$
This suffices to conclude that
$\JgP^{\E,ss}\cong\JgP(\phi)$ over $\MgP^{(0)}$.
\end{proof}




\subsection{Forgetful morphisms}

Consider the forgetful morphism
\begin{equation*}
\begin{array}{rccc}
\pi_x:&\MgPx&\longrightarrow &\MgP\\
&(\xymatrix{
X\ar[r]_{f} & T \ar @/_/[l]_{\sigma_i,\sigma_x}
})&\longmapsto & (\xymatrix{
\bar X\ar[r]_{\bar f} & T \ar @/_/[l]_{\bar\sigma_i}
})
\end{array}
\end{equation*}
and a $d$-polarisation $\E_x$ on $\MgPx$. The aim of this section is to show that, if $\E_x$ satisfies certain properties and if $\sigma$ is a section of $\pi':\CgPx\to\ov{\mathcal M}_{g,A\cup\{x\}}$ inducing a section $\sigma$ of $\pi:\CgP\to\MgP$, then $\pi_x$ induces a morphism
\begin{equation}\label{E:forg}
\begin{array}{rccc}
\pi_x:&\JgPx^{\E_x,\star}&\longrightarrow &\JgP^{\E,\star}\\
&(\xymatrix{
X\ar[r]_{f} & T \ar @/_/[l]_{\sigma_i,\sigma_x}
}, I)&\longmapsto & (\xymatrix{
\bar X\ar[r]_{\bar f} & T \ar @/_/[l]_{\bar\sigma_i}
}, {\pi_x}_*(I))
\end{array}
\end{equation}
where $\star=ss, s$ or $\sigma$ and $\E$ is a certain $d$-polarisation on $\MgP$ induced by $\E_x$.

Let $(X; p_1,\dots, p_{|A|},p_x;I)$ be a section of $\JgPx$ over an algebraically closed field $k$ and let $Y\subseteq X$ be an irreducible component of $X$ contracted by $\pi_x$. We say that $\E_x$ satisfies the condition $(*)$ if for all such $Y$ we have that
$$q_Y^{E_x}=0,\hspace{2cm} (*)$$
where $E_x$ denotes the polarisation induced by $\E_x$ on $X$. In other words, $q_Y^{E_x}$ must be equal to $0$ for all irreducible components $Y$ isomorphic to $\mathbb P^1$ such that:
\begin{enumerate}[(a)]
\item $k_Y=2$, $p_x\in Y$ and, $\forall i\in A, p_i\notin Y$;
\item $k_Y=1$, $\exists i $ such that $p_i,p_x\in Y$ and, $\forall i\neq j\in A, p_j\notin Y$.
\end{enumerate}
In particular, $q_Y^{E_x}=0$ for $Y$ as in $(a)$ implies that $a_x=0$.

\begin{defi}
Let $Z\subsetneq X$ be a proper subcurve of a nodal curve $X$ and let $I$ be a torsion-free rank-1 simple sheaf on $X$. Set $$NF(Z):=\{p\in Z\cap \overline{Z^c}:  \mbox{ is not locally free at }p\}$$ and define
$$d(Z):=\deg I_Z+\sharp NF(Z).$$
\end{defi}

\begin{lem}\label{L:deg-NF}
Let $\E_x$ be a $d$-polarisation on $\MgPx$ satisfying condition $(*)$. Then, given an irreducible component $Y\subsetneq X$ contracted by $\pi_x$, we have that
\begin{enumerate}[(i)]
\item $d(Y)=\deg I_Y=0$ if $Y$ is as in (b) above;
\item $d(Y)$ may be equal to $-1,0$ or $1$ if $Y$ is as in (a) above.
\end{enumerate}
\end{lem}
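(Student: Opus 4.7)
The plan is to combine the $E_x$-semistability of $I$ applied to both $Y$ and its complement $Y^c$ with the standard additivity formula for torsion-free rank-$1$ simple sheaves,
$$\deg I_Y + \deg I_{Y^c} = d - \sharp NF(Y),$$
obtained by comparing Euler characteristics in the exact sequence $0 \to I \to I_Y \oplus I_{Y^c} \to T \to 0$, where $T$ is a skyscraper supported on $Y \cap Y^c$ of length $1$ at each node where $I$ is locally free and $0$ elsewhere. Since $w_Y + w_{Y^c} = 2g-2$ and $\deg E_x = r(d-g+1)$, one has the identity $q_Y^{E_x} + q_{Y^c}^{E_x} = d$; hence condition $(*)$ forces $q_{Y^c}^{E_x} = d$, and semistability applied to $Y$ and $Y^c$ yields the two bounds
$$\deg I_Y \geq -\tfrac{k_Y}{2}, \qquad \deg I_{Y^c} \geq d - \tfrac{k_Y}{2}.$$

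Substituting the second bound into the additivity formula produces the sandwich
$$-\tfrac{k_Y}{2} \;\leq\; \deg I_Y \;\leq\; \tfrac{k_Y}{2} - \sharp NF(Y),$$
which already contains both assertions. In case (b), $k_Y = 1$ and $\sharp NF(Y) \in \{0,1\}$, so integrality of $\deg I_Y$ forces $\deg I_Y = 0$ and $\sharp NF(Y) = 0$, giving $d(Y) = 0$ as claimed. In case (a), $k_Y = 2$ and $\sharp NF(Y) \in \{0,1,2\}$; then $\deg I_Y \in \{-1, \dots, 1 - \sharp NF(Y)\}$ and $d(Y) = \deg I_Y + \sharp NF(Y)$ satisfies $-1 \leq d(Y) \leq 1$, which is exactly the statement.

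I do not foresee any serious obstacle. The only non-routine ingredient is the degree additivity formula, which is standard in Esteves' theory and is used repeatedly throughout \cite{est1}; everything else reduces to a one-line numerical check, carried out separately in the two cases $k_Y = 1$ and $k_Y = 2$.
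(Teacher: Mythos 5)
Your proof is correct and follows essentially the same route as the paper's: semistability applied to both $Y$ and $Y^c$, combined with the additivity $\deg I_Y+\deg I_{Y^c}=d-\sharp NF(Y)$, which is exactly the relation $d=d(Y)+\deg I_{\overline{Y^c}}$ invoked in the paper. The only (harmless) divergence is that in case (b) you deduce $\sharp NF(Y)=0$ from the numerical sandwich plus integrality, whereas the paper gets it from the simplicity of $I$ via \cite[Prop.~1]{est1}; both arguments are valid.
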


\begin{proof}
Start by noticing that by \cite[Prop 1]{est1} $NF(Y)$ must be properly contained in $Y\cap \overline{Y^c}$, otherwise $I$ would be decomposable, and therefore not simple.
This immediately implies that for $Y$ as in (b), $d(Y)=\deg I_Y$. The fact that it must be equal to zero follows immediately from the definition of semistability with respect to $\E_x$, since $q_Y^{E_x}=0$ by hypothesis and $\frac{k_Y}2=\frac 12$.

Analogously, for $Y$ as in (a), we have that $k_Y=2$ hence $\deg I_Y$ may be equal to $-1,0, 1$. Therefore, since $\sharp NF(Y)=0$ or $1$, we get that $d(Y)$ could be $-1,0,1,2$. However, $d(Y)$ can not be equal to $2$ as this would imply that the semistability condition would be violated for $Y^c$ (observe that $d=d(Y)+\deg I_{\overline{Y^c}}$).
\end{proof}

We will now use Lemma \ref{L:deg-NF} in order to show that for polarisations $\E_x$ as above, $\pi_x$  lands  in $\JgP$.

\begin{prop}\label{P:forg-torsfree}
Let $(\xymatrix{ X\ar[r]_{f} & T \ar @/_/[l]_{\sigma_i,\sigma_x}}, I)$ be a section of $\JgPx^{\E_x,\star}$, where $\E_x$ is a $d$-polarisation on $\MgPx$ satisfying condition $(*)$. Then  ${\pi_x}_*(I)$ is a torsion-free rank-1 simple sheaf on $\pi_x(X\to T)=\bar X\to T$.
\end{prop}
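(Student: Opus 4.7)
The verification is fiber-by-fiber over $T$: I would first fix a geometric point of $T$ to obtain an $(A\cup\{x\})$-pointed stable curve $X$, a simple torsion-free rank-$1$ sheaf $I$ on $X$, the stabilisation $\bar X = \pi_x(X)$, and the contraction morphism $c : X \to \bar X$. The contracted locus is precisely the union of the destabilised components, which, by the condition $(*)$ and the discussion preceding Lemma \ref{L:deg-NF}, are of types (a) and (b) only; outside this locus $c$ is an isomorphism, so ${c}_*(I)$ agrees with $I$ there and has generic rank $1$ on every irreducible component of $\bar X$.

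The heart of the proof is the local analysis of ${c}_*(I)$ at the image $\bar y \in \bar X$ of each contracted component $Y$, which is governed by Lemma \ref{L:deg-NF}. For a type (b) component the Lemma yields $\deg I_Y = 0$ and $NF(Y) = \emptyset$; hence $I$ is a line bundle in a neighbourhood of $Y$ with $I|_Y \cong \mathcal O_Y$, and a direct computation of $c_*$ near $Y$ shows that ${c}_*(I)$ is a line bundle at the resulting smooth point $\bar y$. For a type (a) component the Lemma restricts $d(Y)$ to $\{-1,0,1\}$; simpleness of $I$ together with \cite[Prop.~1]{est1} forbids $\sharp NF(Y) = 2$, so I would enumerate the pairs $(\deg I_Y,\sharp NF(Y))$ compatible with these constraints and compute ${c}_*(I)$ locally in each, obtaining in every case either a line bundle or the maximal ideal sheaf of the new node at $\bar y$, both torsion-free rank-$1$ sheaves on $\bar X$. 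Simpleness of ${c}_*(I)$ then follows formally: any endomorphism of ${c}_*(I)$ restricts to an endomorphism of $I$ over the open dense locus where $c$ is an isomorphism, which by simpleness of $I$ is a scalar, and this scalar extends uniquely to all of ${c}_*(I)$.

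To return to the family, the case-by-case computation above also shows $R^1 c_* I = 0$ on every geometric fiber, so that $\chi({c}_*(I)_t) = \chi(I_t)$ is locally constant in $t$ and ${c}_*(I)$ is $T$-flat by cohomology and base change, with geometric fibers the simple torsion-free rank-$1$ sheaves just constructed. The main obstacle is the type-(a) subcase with $\sharp NF(Y) = 1$: one must write down explicitly the mixed local model of $I$ at the non-locally-free attachment node and verify that pushforward produces a torsion-free sheaf of the advertised shape rather than something pathological. This is a concrete computation but the only genuinely delicate point of the argument.
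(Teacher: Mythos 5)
Your overall strategy --- a direct, case-by-case local computation of the pushforward at the contracted component --- is a legitimate alternative to the paper's argument, which instead reduces everything to the line-bundle case: the paper passes to $\mathbb P_X(I)=\mathrm{Proj}(\oplus_{n\geq 0}\mathrm{Sym}^n(I))$, where $I$ becomes the tautological line bundle $\mathcal O(1)$ having degree $1$ on each bubbled node, and then applies \cite[Theorem 3.1(2)]{EP} to the semistable modification $\pi_x\circ\alpha$, using Lemma \ref{L:deg-NF} only to check that the total degree of $\mathcal O(1)$ on each contracted chain lies in $\{-1,0,1\}$. As written, however, your proposal has a genuine gap exactly where the paper's proof does its real work: the subcase of a type (a) component with $\sharp NF(Y)=1$ is the one configuration in which $I$ is \emph{not} a line bundle near the contracted locus, and you explicitly defer the verification that the pushforward is still torsion-free of rank $1$ there. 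This is not a routine check to be postponed --- it is the reason the paper introduces the bubbling construction at all (after bubbling, the contracted locus becomes a chain of two rational curves carrying a genuine line bundle of total degree $d(Y)\in\{0,1\}$, and the quoted theorem applies). Until that case is written out, the central claim of the proposition is unproved.

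A second, independent flaw is the simpleness argument. Simpleness of $I$ is a statement about $\mathrm{End}_X(I)$ for the \emph{proper} curve $X$; it says nothing about endomorphisms of the restriction $I|_{c^{-1}(U)}$ to a dense open subset, which form a large ring containing all regular functions on $c^{-1}(U)$. So ``restricts to an endomorphism of $I$ over the open dense locus, which by simpleness of $I$ is a scalar'' is a non sequitur: one would first have to extend the induced endomorphism across the contracted component to a global endomorphism of $I$, which is not automatic. The paper sidesteps this by arguing with decomposability: by \cite[Proposition 1]{est1} a non-simple rank-$1$ torsion-free sheaf decomposes as a direct sum along a subcurve decomposition, and such a decomposition of ${\pi_x}_*(I)$ would pull back to a decomposition of $I$, contradicting the simpleness of $I$. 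Your flatness and base-change paragraph, by contrast, is fine once the fiberwise statements are in place.
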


\begin{proof}
We will start by showing that we can reduce to the case when the family $f:X\to T$ has no irreducible curves of type (b) in its fibers. Let $Y$ be such a fiber. Then, since $I$ is simple and $\E_x$-semistable, it is free at the unique node of $Y\cap \overline{Y^c}$ and $\deg I_Y=0$. Denote by $f':X'\to T$ the restriction of $f$ to the complementary locus of these components. Then one easily checks that ${\pi_x}_*(I)={\pi_x}_*(I_{|X'})$, which shows that we can ignore these components.

Consider now a family $f:X\to T$ with no irreducible components of type (b) in its fibers. Then $\pi_x(f):X\to  \bar X$ is a semistable modification of $f$ in the sense of \cite[sec. 3]{EP}. We would like to apply Theorem 3.1(2) in loc. cit., which would assert that ${\pi_x}_*(I)$ is rank-1 torsion-free, but since it holds only in the case when $I$ is a line bundle, we need to modify the family first.

Consider the family $$P(f):\mathbb P_X(I):=Proj(\oplus_{n\geq 0} Sym^n(I))\to T.$$
$P(f)$ is obtained from $X\to T$ by bubbling the nodes of the fibers of $f$ where $I$ is not free (see \cite[Prop. 5.2]{EP}).
According to Proposition 5.5 in loc. cit. the family $ P(f):X(I)\to T$ is endowed with a T-morphism  $\alpha:\mathbb P_X(I)\to X$ which is
  an isomorphism away from the bubbled exceptional components, and with a tautological line bundle $L:=\mathcal O_{\mathbb P_X(I)}(1)$ such that $\deg_EL=1$ for all exceptional components $E$ of $\alpha$ and with $\alpha_*(L)=I$.
 Consider now the semistable modification $\pi_x\circ \alpha:\mathbb P_X(I)\to \bar X$. By Lemma \ref{L:deg-NF}, the restriction of $L$ to any chain of rational components in any fiber of $P(f)$ contracted by $\pi_x\circ\alpha$ is equal to $-1$, $0$ or $1$, which, by Theorem 3.1(2) in \cite{EP}, implies that ${\pi_x}_*(I)={\pi_x\circ\alpha}_*(I)$ is torsion-free and has rank-1.

Finally, observe that since $I$ is simple, ${\pi_x}_*(I)$ remains certainly simple: it is enough to observe that a decomposition of ${\pi_x}_*(I)$ would induce a decomposition of $I$ as well and use Proposition 1 of \cite{est1}.

\end{proof}

Let now $\E_x=\left (\omega^s(\sum_{i\in A\cup\{x\}}a_i\sigma_i)\otimes \sum_{(b,B)}\alpha_{(b,B)}\da\right )\oplus \mathcal O_{\CgPx}^{r-1}$ be a $d$-polarisation on $\MgPx$ satisfying condition $(*)$. Then, since  that $a_x$ has to be equal to $0$, we can consider
$$\E:=\left (\omega^s(\sum_{i\in A}a_i\sigma_i)\otimes \sum_{(b,B)}\alpha_{(b,B)}\da\right )\oplus \mathcal O_{\CgP}^{r-1},$$
which will be a $d$-polarisation in $\MgP$.

\begin{prop}\label{P:forg-stability}
Notation as above. Given $(\xymatrix{
X\ar[r]_{f} & T \ar @/_/[l]_{\sigma_i,\sigma_x}
}, I)\in \JgPx^{\E_x,\star}(T)$, we have that $(\xymatrix{
\bar X\ar[r]_{\bar f} & T \ar @/_/[l]_{\bar \sigma_i}
}, {\pi_x}_*(I))\in\JgP^{\E,\star}(T)$.
\end{prop}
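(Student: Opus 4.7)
The plan is as follows. By Proposition \ref{P:forg-torsfree}, ${\pi_x}_*(I)$ is already known to be a simple rank-$1$ torsion-free sheaf on $\bar X \to T$, so it suffices to verify that it satisfies the $\E$-(semi)stability inequality (and, when $\star = \sigma$, the sharper inequality along the distinguished section). Since these conditions are fibrewise, I would reduce to a single pointed curve $(X; p_1, \dots, p_n, p_x; I)$ over an algebraically closed field, with image $(\bar X; \bar p_1, \dots, \bar p_n; {\pi_x}_*(I))$. As in the proof of Proposition \ref{P:forg-torsfree}, I may further assume that $X$ has no contracted component of type (b): each such component $E$ carries $\deg_E I = 0$, meets $\overline{E^c}$ in a single node at which $I$ is locally free, and is contracted to a smooth point of $\bar X$, so contributes trivially to both sides.

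Given a proper subcurve $\bar Y \subsetneq \bar X$, I would lift it to a subcurve $Y \subseteq X$ by setting $Y := \pi_x^{-1}(\bar Y)$, leaving open the binary choice (include in $Y$ or in $Y^c$) of each contracted type-(a) component of $X$ mapped to a boundary node of $\bar Y$. The heart of the argument is the comparison of the three quantities entering the stability inequality on either side:
\[
\deg_{\bar Y}({\pi_x}_*(I)) = \deg_Y(I) + \varepsilon_1, \quad q^E_{\bar Y} = q^{E_x}_Y + \varepsilon_2, \quad k_{\bar Y} = k_Y + \varepsilon_3,
\]
where each $\varepsilon_i$ is a sum of local contributions at the contracted type-(a) components of $X$ contained in $Y$. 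The middle identity is where condition $(*)$ is crucial: the pullback formula $\pi_x^*(\da) = \da + \delta_{b, B \cup \{x\}}$, the comparison of relative dualizing sheaves under contraction, and the vanishing $a_x = 0$ and $q^{E_x}_E = 0$ forced by $(*)$, combine to make $\varepsilon_2$ a simple local expression tightly coupled to $\varepsilon_3/2$.

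Subtracting the three comparisons, the required inequality $\deg_{\bar Y}({\pi_x}_*(I)) \geq q^E_{\bar Y} - \tfrac{k_{\bar Y}}{2}$ reduces to the inequality already known to hold for $Y \subseteq X$, plus a correction expressible purely in terms of the integers $d(E)$ and $\sharp NF(E)$ of Lemma \ref{L:deg-NF}, which take values in $\{-1, 0, +1\}$ and $\{0, 1\}$ respectively. A short case analysis should then show that the binary choice of lift can always be made so that this correction is non-negative, and strictly positive whenever $\E$-stability or $(\E, \sigma)$-quasistability along $\sigma$ demands it; the compatibility of the section $\sigma$ on $\CgPx$ with its image on $\CgP$ ensures that the distinguished point lies in $\bar Y$ if and only if its lift lies in the chosen $Y$, so the strict inequality for quasistability transfers.

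The main obstacle will be the combinatorial bookkeeping in the middle step, namely verifying that the correct choice of binary lift of each contracted type-(a) component always produces a non-negative (respectively positive) combined correction $\varepsilon_1 - \varepsilon_2 + \tfrac{\varepsilon_3}{2}$. Condition $(*)$ is designed precisely to make this possible for every $\bar Y$; I expect the most delicate configuration to be a boundary node of $\bar Y$ carrying a contracted $\mathbb P^1$ with $d(E) = 0$, where the binary choice is genuinely needed to avoid a $-\tfrac{1}{2}$ error in the inequality.
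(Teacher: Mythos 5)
Your overall strategy is the paper's: reduce to a geometric fibre, lift each proper subcurve $\bar Z\subsetneq\bar X$ to a subcurve of $X$, and compare the three quantities $\deg$, $q$ and $k$ entering inequality \eqref{semistabledefdegree} on the two sides, with condition $(*)$ doing the work at the contracted components. The problem is that you stop exactly at the decisive point: the ``short case analysis'' showing that some binary choice of lift makes $\varepsilon_1-\varepsilon_2+\tfrac{\varepsilon_3}{2}$ non-negative (and strictly positive when needed) is asserted, not carried out, and you yourself flag the configuration of a contracted component with $d(E)=0$ over a boundary node as the place where a spurious $-\tfrac 12$ could appear. Since that verification is where the entire content of the proposition sits, the proposal as written has a genuine gap there.

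The paper closes this by making one canonical choice that kills all correction terms at once: take $Z:=\pi_x^{-1}(\bar Z)$, the full preimage, so that every contracted type-(a) component lying over a node of $\bar Z$ (boundary or interior) is included in $Z$. One then checks directly that $g_Z=g_{\bar Z}$, $k_Z=k_{\bar Z}$ (attaching a contracted $\mathbb P^1$ at a boundary node still contributes exactly one intersection point with $Z^c$), $\sum_{p_i\in Z}a_i=\sum_{p_i\in\bar Z}a_i$ (using $a_x=0$, which is forced by $(*)$), and $\sum_{(b,B)}\alpha_{(b,B)}\delta_{b,B}^{Z}=\sum_{(b,B)}\alpha_{(b,B)}\delta_{b,B}^{\bar Z}$; hence $q^{E_x}_Z=q^{E}_{\bar Z}$ exactly, and together with $\deg({\pi_x}_*(I))_{\bar Z}=\deg I_Z$ the inequality for $\bar Z$ with respect to $\E$ becomes \emph{equivalent} to the known inequality for $Z$ with respect to $\E_x$. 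There is then no non-negativity argument, no case split on the values of $d(E)$ from Lemma \ref{L:deg-NF} (which is only needed for the torsion-freeness statement of Proposition \ref{P:forg-torsfree}), and the quasistability clause transfers because $\sigma$ lands in $\bar Z$ if and only if it lands in $Z$. I would recommend you either complete your case analysis in full, or, better, replace the binary choice by the full preimage and verify the four equalities above; the latter is both shorter and all the argument actually requires.
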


\begin{proof}
By Proposition \ref{P:forg-torsfree} above, we already know that $(\xymatrix{
\bar X\ar[r]_{\bar f} & T \ar @/_/[l]_{\bar \sigma_i}
}, {\pi_x}_*(I))\in\JgP^{\E,\star}(T)$, so it remains to show that ${\pi_x}_*(I)$ is semistable (resp. stable, resp. $\sigma$-quasistable).

It suffices to assume that $\bar X$ is a curve over $\spec k$. Let $\bar Z\subseteq \bar X$ be a subcurve of $\bar X$ and let $Z:=\pi_x^{-1}(Z)$. Then it is easy to see that
 \begin{itemize}
 \item $g_{\bar Z}=g_Z$;
 \item $k_Z=k_{\bar Z}$;
 \item $\sum_{p_i\in {\bar Z}}a_i=\sum_{p_i\in {Z}}a_i$;
 \item $\sum_{(b,B)}\alpha_{(b,B)}\delta_{b,B}^{\bar Z}=\sum_{(b,B)}\alpha_{(b,B)}\delta_{b,B}^{Z}$;
\end{itemize}
which implies that $q_Z^E=q_{\bar Z}^{E_x}$. This together with the observation that $\deg({\pi_x}_*(I))_{\bar Z}=\deg I_Z$ implies that inequality \eqref{semistabledefdegree} for ${\pi_x}_*(I)$ in $\bar Z$ is equivalent to inequality \eqref{semistabledefdegree} for $I$ in $Z$, and we conclude that ${\pi_x}_*(I)$ is $\E_x$-semistable (resp. stable, resp. $\sigma$-quasistable) if and only if $I$  is $\E$-semistable (resp. stable, resp. $\sigma$-quasistable).

\end{proof}

\subsection{Clutching compactified universal Jacobians}

The aim of the present section is to discuss when the stability conditions we have been imposing on our sheaves are compatible with the clutching morphisms  discussed in Proposition \ref{P:clutch-forget}.

\begin{prop}\label{P:clutching1}
Let
$\E=\left (\omega^s(\sum_{i\in A\cup\{x,y\}}a_i\sigma_i)\otimes \sum_{(b,B)}\alpha_{(b,B)}\da\right )\oplus \mathcal O_{\ov{\mathcal C}_{g-1,A\cup\{x,y\}}}^{r-1}$
be a $d$-polarisation on $\ov{\mathcal M}_{g-1,A\cup\{x,y\}}$ such that $a_x=a_y=s$.
Then the vector bundle $\ov\E=\left (\omega^s(\sum_{i\in A}a_i\sigma_i)\otimes \sum_{(b,B)}\alpha_{(b,B)}\da\right )\oplus \mathcal O_{\CgP}^{r-1}$ is a $(d+1)$-polarisation on $\MgP$
such that $\ov\xi_{irr}$ sends $\ov{\mathcal J}_{g-1,A\cup\{x,y\}}^{\E, s}$ (resp.  $\ov{\mathcal J}_{g-1,A\cup\{x,y\}}^{\E, ss}$, $\ov{\mathcal J}_{g-1,A\cup\{x,y\}}^{\E, \sigma}$) to $\JgP^{\ov \E,s}$ (resp. $\JgP^{\ov \E,ss}$, $\JgP^{\ov\E,\sigma}$), where $\sigma$ is any section of $\ov{\mathcal C}_{g-1,A\cup\{x,y\}}\to\ov{\mathcal M}_{g-1,A\cup\{x,y\}}$ which does not coincide with $\sigma_x$ nor with $\sigma_y$.


\end{prop}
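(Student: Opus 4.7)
The plan is first to verify that $\ov\E$ has the right numerical invariants to be a $(d+1)$-polarisation on $\MgP$, and then to check that for each section $(f:X\to T;\sigma_i,\sigma_x,\sigma_y;I)$ of $\ov{\mathcal J}_{g-1,A\cup\{x,y\}}^{\E,\star}$, the pushed-forward sheaf $F:=(\xi_{irr}^f)_*(I)$ on $\ov X:=\xi_{irr}(X)$ satisfies the corresponding stability condition with respect to $\ov\E$. The fact that $F$ is simple, rank-$1$ torsion-free is already built into the definition of $\ov\xi_{irr}$ in Proposition \ref{P:clutch-forget}, so only the numerical inequalities need to be checked.

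For the polarisation step, one computes $\rk\ov\E = r$ and relative degree $\deg \ov\E = s(2g-2)+\sum_{i\in A}a_i$. Under the hypothesis $a_x=a_y=s$ this equals $s(2g-4)+\sum_{i\in A}a_i+a_x+a_y$, which in turn equals $\deg\E=r(d-g+2)=r\bigl((d+1)-g+1\bigr)$ since $\E$ is a $d$-polarisation on $\ov{\mathcal M}_{g-1,A\cup\{x,y\}}$. Hence $\ov\E$ is a $(d+1)$-polarisation on $\MgP$.

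For the stability step, reduce to the case $T=\spec k$ and write $\nu:=\xi_{irr}^f:X\to\ov X$ for the partial normalization at the new node $p$, with $\nu^{-1}(p)=\{\sigma_x,\sigma_y\}$. Given a proper subcurve $\ov Z\subsetneq \ov X$, set $Z:=\nu^{-1}(\ov Z)$ and split according to the number of branches of $\ov X$ at $p$ contained in $\ov Z$. When zero or one branch is contained in $\ov Z$, $\nu|_Z:Z\to\ov Z$ is an isomorphism; a direct bookkeeping on $w_{\ov Z}$, $k_{\ov Z}$, the marked-point contributions (where $a_x=a_y=s$ absorbs the appearance or disappearance of one of $\sigma_x,\sigma_y$) and the fact that $\delta_{b,B}^{\ov Z}=\delta_{b,B}^{Z}$ (the new node is of irreducible type and contributes to no separating class) shows that $q^{\ov E}_{\ov Z}-\tfrac{k_{\ov Z}}{2}=q^E_Z-\tfrac{k_Z}{2}$ and $\deg F_{\ov Z}=\deg I_Z$, so \eqref{E:stabCond} for $F$ on $\ov Z$ reduces to \eqref{E:stabCond} for $I$ on $Z$. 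When both branches lie in $\ov Z$, the subcurve $Z$ contains both $\sigma_x$ and $\sigma_y$ and $\nu|_Z$ is a genuine partial normalization; the analogous computation gives $w_{\ov Z}=w_Z+2$, $k_{\ov Z}=k_Z$, $\sum_{p_i\in\ov Z}a_i=\sum_{p_i\in Z}a_i-2s$ and $\delta_{b,B}^{\ov Z}=\delta_{b,B}^{Z}$, yielding $q^{\ov E}_{\ov Z}-\tfrac{k_{\ov Z}}{2}=q^E_Z-\tfrac{k_Z}{2}+1$. Combined with the degree identity $\deg F_{\ov Z}=\deg I_Z+1$ discussed below, this shows that \eqref{E:stabCond} for $F$ on $\ov Z$ is again equivalent to \eqref{E:stabCond} for $I$ on $Z$. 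Quasistability is treated identically: since $\sigma$ avoids $\sigma_x$ and $\sigma_y$ by hypothesis, the image of $\sigma$ lies in $\ov Z$ exactly when the corresponding section lies in $Z$, so strict inequalities transfer.

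The main technical obstacle is the identity $\deg F_{\ov Z}=\deg I_Z+1$ when both branches of $\ov X$ at $p$ lie in $\ov Z$, since $F$ is typically non-locally-free at $p$ and $F|_{\ov Z}$ can acquire torsion there. My plan is to identify $F_{\ov Z}$ with the pushforward $(\nu|_Z)_*(I_Z)$ and to invoke finiteness of $\nu|_Z$ to obtain $\chi(F_{\ov Z})=\chi(I_Z)$. Combined with the arithmetic genus comparison $p_a(\ov Z)=p_a(Z)+1$, which holds regardless of whether $Z$ is connected or disconnected via the short exact sequence $0\to \O_{\ov Z}\to (\nu|_Z)_*\O_Z\to k_p\to 0$, this yields $\deg F_{\ov Z}-\deg I_Z=p_a(\ov Z)-p_a(Z)=1$, completing the argument.
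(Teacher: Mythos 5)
Your proposal is correct and follows essentially the same route as the paper: verify the numerical identity making $\ov\E$ a $(d+1)$-polarisation (using $a_x+a_y=2s$), then compare $q^{\E}$, $k$, and degrees on a subcurve and its image under the clutching map, splitting into cases according to whether zero, one, or both of the branches at the new node (equivalently, the points $\sigma_x,\sigma_y$) lie on the subcurve, with $a_x=a_y=s$ making $q-\tfrac{k}{2}$ and the degree shift consistently so that inequality \eqref{E:stabCond} transfers. The only differences are cosmetic — you index by subcurves of $\ov X$ and pull back where the paper indexes by subcurves of $X$ and pushes forward — and you additionally justify the degree identity $\deg F_{\ov Z}=\deg I_Z+1$ via finiteness of the partial normalization and Euler characteristics, a step the paper simply asserts.
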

\begin{proof}
Start by noticing that $r(\E)=r(\ov\E)=r$ and that
\begin{align*}
\deg(\ov\E)&=s(2g-2)+\sum_{i\in A}a_i\\
&=s(2(g-1)-2)+2s+\sum_{i\in A\cup\{x,y\}}a_i-a_x-a_y
\end{align*}
which, as $a_x+a_y=2s$, is equal to $\deg(\E)$. So, since
$$\deg(\E)=r(d-(g-1)+1)=r((d+1)-g+1)$$
we get that $\ov\E$ is a $(d+1)-$polarisation on $\MgP$.

Let now $(X;p_1,\dots,p_n,p_x,p_y;I)$ be a section of $\ov{\mathcal J}_{g-1,A\cup\{x,y\}}$ over an algebraically closed field $k$ and denote by $(\ov X;p_1',\dots,p_n';\ov I)$ its image under $\ov \xi_{irr}$ in $\JgP(k)$. We must show that if $(X;p_1,\dots,p_n,p_x,p_y;I)\in\ov{\mathcal J}_{g-1,A\cup\{x,y\}}^{\E, s}$ (resp.  $\ov{\mathcal J}_{g-1,A\cup\{x,y\}}^{\E, ss}$, $\ov{\mathcal J}_{g-1,A\cup\{x,y\}}^{\E, \sigma}$),  then $(\ov X;p_1',\dots,p_n';\ov I)\in\JgP^{\ov \E,s}$ (resp. $\JgP^{\ov \E,ss}$, $\JgP^{\ov\E,\sigma}$).
Notice that
$\deg(\ov I)=\deg((\ov\xi_{irr})_*(I))=\deg(I)+1=d+1.$
It remains to check that given $Y\subseteq X$,  $\deg I_Y$ satisfies (resp. strictly satisfies) inequality (\ref{semistabledefdegree}) with respect to $\E$ if and only if $\deg\ov I_{\ov Y}$ satisfies (resp. strictly satisfies) (\ref{semistabledefdegree}) with respect to $\ov\E$, where $\ov Y$ denotes the image of $Y$ under $\xi_{irr}$.

Start by considering the case when neither $p_x$ nor $p_y$ belongs to $Y$. Then we have that
 $q^\E_Y=q^{\ov\E}_{\ov Y}$
and $\deg I_Y=\deg\ov I_{\ov Y}$, which implies that inequality (\ref{semistabledefdegree}) for $Y$ with respect to $\E$ is satisfied (resp. strictly satisfied) if and only if inequality (\ref{semistabledefdegree}) for $\ov Y$ with respect to $\ov\E$ is satisfied (resp. strictly satisfied).

Suppose now that $p_x\in Y$ and $p_y\notin Y$ (the case $p_x\notin Y$ and $p_y\in Y$ is analogous).
Then $g_Y=g_{\ov Y}$, $k_Y=k_{\ov Y}-1$ and $\deg I_Y=\deg\ov I_{\ov Y}$. But then, using the fact that $a_x=s$, we easily see that
$$q^{\ov \E}_{\ov Y}=\frac{s(2g_{Y}-2+k_Y+1)+(\sum_{p_i\in Y}a_i)-a_x}{r}+\frac{\omega_Y+1}{2}=q^\E_Y+\frac 12,$$
and thus $q^{\ov \E}_{\ov Y}-\frac{k_{\ov Y}}{2}=q^{\E}_{ Y}-\frac{k_{Y}}{2}$.
Again, we conclude that inequality (\ref{semistabledefdegree}) for $Y$ with respect to $\E$ is satisfied (resp. strictly satisfied) if and only if inequality (\ref{semistabledefdegree}) for $\ov Y$ with respect to $\ov\E$ is satisfied (resp. strictly satisfied).

Finally, let us consider the case when both $p_x$ and $p_y$ are in $Y$. Then $g_Y=g_{\ov Y}-1$, $k_Y=k_{\ov Y}$ and $\deg I_Y=\deg\ov I_{\ov Y}-1$. We get the same conclusion as in the cases above since
\begin{align*}
q^\E_Y&=\frac{s(2g_Y-2+k_Y)+\sum_{p_i\in Y}a_i}{r}+\frac{\omega_Y}2\\
&=\frac{s(2(g_{\ov Y}-1)-2+k_{\ov Y})+\sum_{p_i\in \ov Y}a_i+a_x+a_y}{r}+\frac{\omega_{\ov Y}}2-1\\
&=\frac{s(2g_{\ov Y}-2+k_{\ov Y})-2s+\sum_{p_i\in \ov Y}a_i+a_x+a_y}{r}+\frac{\omega_{\ov Y}}2-1
\end{align*}
which, as $2s=a_x+a_y$, is equal to $q^{\ov \E}_{\ov Y}-1$.

The whole statement follows.
\end{proof}

The statement of the Proposition we have just proved also holds for the canonical polarisation, as we show in the following

\begin{prop}
Let $\E^{d,\un a}$ be the canonical polarisation associated to $(d,\un a)$ on $\ov{\mathcal M}_{g-1,A\cup\{x,y\}}$ and assume that $a_x=a_y=1$.
Then the clutching morphism  $\ov\xi_{irr}$ sends $\ov{\mathcal J}_{g-1,A\cup\{x,y\}}^{\E^{d,\un a}, s}$ (resp.  $\ov{\mathcal J}_{g-1,A\cup\{x,y\}}^{\E^{d,\un a}, ss}$) to $\JgP^{\E^{d+1,\un{a'}},s}$ (resp. $\JgP^{\E^{d+1,\un{a'}},ss}$), where $\E^{d+1,\un{a'}}$ is the canonical polarisation on $\MgP$ associated to $(d+1,\un{a'})=(d+1, a_i\in A\setminus\{x,y\})$.
\end{prop}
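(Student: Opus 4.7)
The plan is to follow the same pattern as the proof of Proposition~\ref{P:clutching1}, observing that the choice $a_x=a_y=1$ plays here the role that $a_x=a_y=s$ played there: it is exactly the condition needed so that the contributions to $q^E$ coming from the weights $a_x, a_y$ cancel the contributions to $\tfrac{w_{\ov Y}}{2}$ and $\tfrac{k_{\ov Y}}{2}$ introduced by the new node of $\ov X$.

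First I would verify that $\E^{d+1,\un{a'}}$ is indeed a $(d+1)$-polarisation on $\MgP$. A direct count of ranks gives
\begin{equation*}
\rk\E^{d,\un a}=1+\bigl(2(g-1)-3+\sum_{i\in A\cup\{x,y\}}a_i\bigr)=2g-2+\sum_{i\in A}a_i,
\end{equation*}
using $a_x=a_y=1$, which agrees with $\rk\E^{d+1,\un{a'}}=1+(2g-3)+\sum_{i\in A}a_i$; call this common rank $r$. The exponent of the line bundle summand is $d-g+2$ in both cases, and multiplying by the relevant degree of $\omega(\sum a_i\sigma_i)$ gives the expected total degree $r\bigl((d+1)-g+1\bigr)$.

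Next, fix a geometric section $(X;p_1,\dots,p_n,p_x,p_y;I)$ of $\ov{\mathcal J}_{g-1,A\cup\{x,y\}}^{\E^{d,\un a},\star}$ with $\ov\xi_{irr}$-image $(\ov X;p_1',\dots,p_n';\ov I)$; then $\deg\ov I=\deg I+1=d+1$, matching the degree fixed by $\E^{d+1,\un{a'}}$. For every subcurve $Y\subseteq X$ with image $\ov Y\subseteq \ov X$ I will compare the inequalities \eqref{semistabledefdegree} for $(Y,I,\E^{d,\un a})$ and for $(\ov Y,\ov I,\E^{d+1,\un{a'}})$, splitting into the same three cases as in Proposition~\ref{P:clutching1} according to how many of $p_x, p_y$ lie in $Y$. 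If none do, then $g_Y=g_{\ov Y}$, $k_Y=k_{\ov Y}$, the weighted sums agree, $\deg I_Y=\deg\ov I_{\ov Y}$, and hence $q^{E^{d,\un a}}_Y=q^{E^{d+1,\un{a'}}}_{\ov Y}$, so the two inequalities coincide. If exactly one of $p_x, p_y$ lies in $Y$, then $g_Y=g_{\ov Y}$, $k_{\ov Y}=k_Y+1$, and $\sum_{p_i\in\ov Y}a'_i=\sum_{p_i\in Y}a_i-1$ thanks to $a_x=a_y=1$; a direct calculation gives $q^{E^{d+1,\un{a'}}}_{\ov Y}=q^{E^{d,\un a}}_Y+\tfrac12$ while $\deg\ov I_{\ov Y}=\deg I_Y$, so the $\tfrac12$ shift in $q$ is absorbed by the $\tfrac12$ shift in $\tfrac{k}{2}$. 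Finally, if both $p_x, p_y$ lie in $Y$, then $g_{\ov Y}=g_Y+1$, $k_{\ov Y}=k_Y$, $\sum_{p_i\in\ov Y}a'_i=\sum_{p_i\in Y}a_i-2$, $\deg\ov I_{\ov Y}=\deg I_Y+1$, and the direct calculation yields $q^{E^{d+1,\un{a'}}}_{\ov Y}=q^{E^{d,\un a}}_Y+1$, so the shifts cancel once more.

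The argument is essentially a bookkeeping exercise: the main step is to correctly track the change in arithmetic genus, number of meeting nodes and weighted marked points when passing from $Y$ to $\ov Y$, and there is no genuine obstacle beyond verifying that the numerical identities align thanks to the hypothesis $a_x=a_y=1$. Once the three cases are established, the equivalence of the (resp.\ strict) inequalities immediately yields the compatibility of $\ov\xi_{irr}$ with $\E^{d,\un a}$-semistability (resp.\ stability), as claimed.
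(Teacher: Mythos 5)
Your proposal is correct and follows exactly the route the paper intends: the paper's own proof of this proposition simply states that it is ``completely analogous to the proof of Proposition \ref{P:clutching1} using inequality \eqref{canonical}'' and leaves the details to the reader, and your three-case bookkeeping (tracking $g_Y$, $k_Y$, the weighted marked-point sums and $\deg I_Y$ under $\ov\xi_{irr}$, with $a_x=a_y=1$ making $w_{\ov Y}+\sum_{p_i\in\ov Y}a_i'=w_Y+\sum_{p_i\in Y}a_i$ while the common rank $r=2g-2+\sum_{i\in A}a_i$ is unchanged) is precisely that verification. The only cosmetic difference is that you work directly with $q^E_Y$ and \eqref{semistabledefdegree} rather than the rearranged form \eqref{canonical}, which is equivalent.
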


\begin{proof}
The proof is completely analogous to the proof of Proposition \ref{P:clutching1} using inequality (\ref{canonical}) and is therefore left to the reader.
\end{proof}

\begin{prop}
Let $\E_1=\omega^s(\sum_{i\in A_1\cup\{x\}}a_i\sigma_i)
\oplus \mathcal O^{r-1}$ be a $d_1$-polari\-za\-tion of rank $r$
on $\ov{\mathcal M}_{g_1,A_1\cup\{x\}}$
and $\E_2=\omega^s(\sum_{i\in A_2\cup\{y\}}a_i\sigma_i)
\oplus \mathcal O^{r-1}$ be a $d_2$-polari\-za\-tion of rank $r$
on $\ov{\mathcal M}_{g_2,A_2\cup\{y\}}$ such that $a_x=a_y=s$. Then
the vector bundle ${\E}:=\omega^s(\sum_{i\in A}a_i\sigma_i)
\oplus \mathcal O^{r-1}$ is a $d$-polarisation of rank $r$
 on $\ov{\mathcal M}_{g,A}$, where $g:=g_1+g_2$, $d=d_1+d_2+1$ and $A:=A_1\cup A_2$. Moreover,
 there are natural morphisms
 $$\ov{\mathcal J}_{g_1,A_1\cup\{x\}}^{\E_1, ss}\times \ov{\mathcal J}_{g_2,A_2\cup\{y\}}^{\E_2, ss}
 \to \JgP^{\ov \E,ss}$$
 and
 $$\ov{\mathcal J}_{g_1,A_1\cup\{x\}}^{\E_1, \sigma}\times \ov{\mathcal J}_{g_2,A_2\cup\{y\}}^{\E_2, ss}
 \to \JgP^{\ov \E,\sigma}$$
 where $\sigma$ is any section on $\ov{\mathcal M}_{g_{1},A_1\cup\{x\}}$.
\end{prop}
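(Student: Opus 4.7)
I would prove this by following exactly the blueprint of Proposition \ref{P:clutching1}, with the clutching morphism $\ov\xi_{b,B}$ from Proposition \ref{P:clutch-forget} playing the role of $\ov\xi_{irr}$, and the main work being the case analysis of subcurves of $\ov X = X_1\cup_p X_2$.

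First I would check the rank and degree conditions on $\ov\E$ by direct computation. Since $\rk\ov\E = r$ is immediate and $\deg\ov\E = s(2g-2)+\sum_{i\in A}a_i$ on a curve of genus $g=g_1+g_2$, I would use the hypothesis $a_x=a_y=s$ to rewrite this as
$$\deg\ov\E = s(2g_1-2)+\sum_{i\in A_1}a_i+a_x+s(2g_2-2)+\sum_{i\in A_2}a_i+a_y = \deg\E_1+\deg\E_2,$$
which equals $r(d_1-g_1+1)+r(d_2-g_2+1)=r((d_1+d_2+1)-g+1)$, confirming the $d$-polarisation claim.

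Next I would construct the morphisms via the clutching $\ov\xi_{b,B}$ of Proposition \ref{P:clutch-forget}: given $(f_i:X_i\to T,\sigma_j,\sigma_{x/y};I_i)$ with $I_i$ $\E_i$-semistable, the underlying pointed curves are glued at the new separating node to form $\ov f:\ov X\to T$; by Esteves' Example 37 (invoked in Proposition \ref{P:clutch-forget}), the pair $(I_1,I_2)$ corresponds to a simple torsion-free rank-$1$ sheaf $I$ on $\ov X$, locally free at the separating node. This defines the morphisms on $T$-points.

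For the transfer of stability, I would fix a section over an algebraically closed field and analyse an arbitrary proper subcurve $Y\subseteq \ov X$ by setting $Y_i:=Y\cap X_i$. The key inputs are the adjunction $\omega_{\ov X}|_{X_i}=\omega_{X_i}(\sigma_{x/y})$, so that $\deg_{Y_i}\omega_{\ov X}=w_{Y_i}^{X_i}+[p\in Y_i]$, and the hypothesis $a_x=a_y=s$, which ensures the contribution from the omega-twist at $p$ exactly matches the marked-point contribution $a_{x/y}$ in $\E_i$. The case split is: (i) $Y\subseteq X_i$ with $p\notin Y$, giving $q_Y^{\ov\E}-k_Y/2 = q_{Y_i}^{\E_i}-k_{Y_i}/2$; (ii) $Y\subseteq X_i$ with $p\in Y$, where $k_Y=k_{Y_i}+1$ and the $+\tfrac12$ gains on both sides cancel; (iii) $Y$ has one-dimensional pieces in both components, in which case $Y$ is the union (possibly disjoint) of $Y_1,Y_2$. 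In each case I would verify that the quantity $\deg_Y I-(q_Y^{\ov\E}-k_Y/2)$ equals the sum $\sum_i(\deg_{Y_i}I_i-(q_{Y_i}^{\E_i}-k_{Y_i}/2))$ up to a non-negative correction, so that semistability (resp. stability) on each $X_i$ implies the corresponding inequality on $\ov X$. For the $\sigma$-quasistability statement, since $\sigma$ is a section of $\ov{\mathcal M}_{g_1,A_1\cup\{x\}}$, any subcurve $Y\subseteq \ov X$ with $\sigma\in Y$ has $\sigma\in Y_1$; the strict inequality supplied by $(\E_1,\sigma)$-quasistability of $I_1$ at $Y_1$ then upgrades the assembled inequality on $Y$ to a strict one, giving $(\ov\E,\sigma)$-quasistability of $I$.

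The main obstacle is case (iii) when $p\in Y$ and both $Y_1,Y_2$ are one-dimensional: here $w_Y^{\ov X} = w_{Y_1}^{X_1}+w_{Y_2}^{X_2}+2$ and $g_Y = g_{Y_1}+g_{Y_2}$, so $q_Y^{\ov\E}$ differs from $q_{Y_1}^{\E_1}+q_{Y_2}^{\E_2}$ by a correction that must be balanced against the degree formula from the gluing of $I_1$ and $I_2$ (which is locally free at $p$). Keeping careful track of this correction, using $a_x+a_y=2s$ to match the $\omega$-contribution, is the technical heart of the proof and parallels the role of case $(c)$ in Proposition \ref{P:clutching1}.
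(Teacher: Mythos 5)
Your overall strategy is the same as the paper's: check rank and degree of $\ov\E$ directly, build the morphism through $\ov\xi_{b,B}$ and Esteves' gluing of simple sheaves at a separating node, and then run a case analysis on subcurves $Y\subseteq \ov X$ according to whether $Y$ meets one or both sides and whether it contains the new node, using $a_x=a_y=s$ to match the $\omega$-contributions. The computations you sketch for $w_Y$, $k_Y$ and $g_Y$ in each case agree with the paper's.

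There is, however, one genuine gap: you glue $I_1$ and $I_2$ directly, but the sheaf obtained this way has total degree $\deg I_1+\deg I_2=d_1+d_2$, whereas $\ov\E$ is a $(d_1+d_2+1)$-polarisation, and by Definition \ref{semiquasistabledef} an $\ov\E$-semistable sheaf must have degree exactly $d=d_1+d_2+1$. So the morphism as you define it does not land in $\JgP^{\ov\E,ss}$ at all. The paper's construction instead sends the pair $(I_1,I_2)$ to the unique simple torsion-free sheaf $I$ on $\ov X$ with $I_{|X_1}\cong I_1(p_x)$ and $I_{|X_2}\cong I_2$, i.e.\ it twists the first factor by $\O(p_x)$ before gluing; this produces $\deg I=d_1+1+d_2=d$. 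This twist also changes your case analysis: in your case (ii) with $Y\subseteq X_1$ and $p\in Y$ you claim the two inequalities are exactly equivalent, but with the correct (twisted) sheaf one has $\deg_Y I=\deg_{Y_1}I_1+1$ while $q^{\ov\E}_Y-\tfrac{k_Y}{2}=q^{\E_1}_{Y_1}-\tfrac{k_{Y_1}}{2}$, so the inequality for $Y$ is automatically \emph{strict}; similarly in your case (iii) the extra $+1$ in the degree is precisely what absorbs the $-1$ discrepancy between $q^{\ov\E}_Y-\tfrac{k_Y}{2}$ and $q^{\E_1}_{Y_1}-\tfrac{k_{Y_1}}{2}+q^{\E_2}_{Y_2}-\tfrac{k_{Y_2}}{2}$. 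Once the twist is inserted, the rest of your argument (including the quasistability claim, which only needs strictness on subcurves whose $X_1$-part contains $\sigma$) goes through as in the paper.
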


\begin{proof}
Start by noticing that $r(\E)=r=r(\E_1)=r(\E_2)$ and that, as $a_x=a_y=s$,
\begin{align*}
\deg(\E)&=s(2(g_1+g_2)-2)+\sum_{i\in A_1\cup A_2}a_i\\
&=s(2g_1-2+2g_2-2)+2s+\sum_{i\in A_1\cup \{x\}}a_i +\sum_{i\in A_2\cup \{y\}}a_i -a_x-a_y\\
&=\deg(\E_1)+\deg(\E_2)=r(d_1-g_1+1+d_2-g_2+1)\\
&=r(d-g+1).
\end{align*}

Let now $(X_1;p_{i, i\in A_1\cup \{x\}};I_1)$ and $(X_2;p_{i,i\in A_2\cup \{y\}};I_2)$ be sections of $\ov{\mathcal J}_{g_1,A_1\cup\{x\}}$ and of
 $\ov{\mathcal J}_{g_2,A_2\cup\{y\}}$, respectively, over an algebraically closed field $k$. Let $( X;p_{i, i\in A};I)\in \JgP(k)$ denote the image of $(X_1;p_{i, i\in A_1\cup \{x\}};I_1(p_x))$ and $(X_2;p_{i, i\in A_2\cup \{y\}};I_2)$ under $\ov\xi_{(g_1,A_1)}$. Recall that $I$ is the unique simple torsion-free sheaf on $X$ such that
 $I_{|X_1}\cong I_1(p_x)$ and $I_{|X_2}\cong I_2$.
 Then
 $$\deg(I)=\deg(I_1)+1+\deg(I_2)=d.$$

It remains to check that if $I_j$ is semistable with respect to $\E_j$ in $X_j$, for $j=1,2$,
then $I$ is semistable with respect to $\E$ in $X$.
Let $Y\subseteq X$ and denote by $Y_j$ the preimages of $Y$ under $\xi_{(g_1,A_1)}$ (notice that $Y_1$ or $Y_2$ might be empty).

Start by considering the case when $Y_2=\emptyset$ and $Y_1$ does not contain $p_x$.
 Then $g_{Y_1}=g_Y$, $k_{Y_1}=k_Y$ and $\deg_{Y_1}(I_1)=\deg_Y(I)$, which implies that inequality (\ref{semistabledefdegree}) (resp. strict inequality) for $Y_1$ with respect to $\E_1$ is equivalent to inequality (\ref{semistabledefdegree}) (resp. strict inequality) for $Y$ with respect to $\E$. The case when $Y_1=\emptyset$ and $Y_2$ does not contain $p_y$ is analogous.

Suppose now that $Y_2=\emptyset$ and that $Y_1$ contains $p_x$. Then $g_{Y_1}=g_Y$ and $k_{Y_1}=k_Y-1$ and, as $a_x=s$,
\begin{align*}
q^{\E_1}_{Y_1}&=\frac{s(2g_{Y_1}-2+k_{Y_1})+\sum_{i\in Y_1}a_i}{r}+\frac{\omega_{Y_1}}{2}\\
& =\frac{s(2g_{Y}-2+k_{Y}-1)+\sum_{i\in Y}a_i+a_x}{r}+\frac{\omega_{Y}}{2}-\frac 12\\
&=q^\E_Y-\frac 12.
\end{align*}
Since inequality (\ref{semistabledefdegree}) holds for $Y_1$ with respect to $\E_1$, we get that
$$\deg_Y(I)=\deg_{Y_1}(I_1)+1\geq q_{Y_1}^{\E_1}-\frac{k_{Y_1}}{2}+1=q_{Y}^{\E}-\frac{k_{Y}}{2}+1,$$
so
inequality (\ref{semistabledefdegree}) holds strictly for $Y$ with respect to $\E$.
The case when $Y_1=\emptyset$ and $Y_2$ contains $p_y$ is analogous.

Finally, let us consider the case when both $Y_1$ and $Y_2$ are non-empty. Then, since it is enough to consider the case when $Y$ is connected, we can assume that $p_x\in Y_1$ and $p_y\in Y_2$.
Then, we have that $g_Y=g_{Y_1}+g_{Y_2}$ and that $k_Y=k_{Y_1}+k_{Y_2}$. So, for $j=1,2$,
 inequality (\ref{semistabledefdegree}) for $Y_j$ with respect to $\E_j$ gives that
$$\deg_{Y_j}(I_j)\geq \frac{s(2g_{Y_j}-2+k_{Y_j})+\sum_{i\in Y_j}a_i}{r}+g_{Y_j}-1$$
which implies that
\begin{align*}
&\deg_{Y_1}(I_1)+\deg_{Y_2}(I_2)\geq \\
&\frac{s(2g_{Y_1}-2+k_{Y_1}+2g_{Y_2}-2+k_{Y_2})+\sum_{i\in Y_1\cup Y_2}a_i}{r} +g_{Y_1}-1+g_{Y_2}-1\\
&=\frac{s(2g_Y-2+k_Y)-2s+\sum_{i\in Y}a_i+a_x+a_y}{r}+g_Y-1-1\\
&=q_Y^{\E}-\frac{k_Y}{2}-1
\end{align*}
Since
$$\deg_Y(I)=\deg_{Y_1}(I_1)+\deg_{Y_2}(I_2)+1,$$
we get that inequality (\ref{semistabledefdegree}) holds for $Y$ with respect to $E$. Moreover, if (\ref{semistabledefdegree}) holds strictly for $Y_1$ with respect to $\E_1$, we also get strict inequality for $Y$ with respect to $\E$. The whole statement follows.

\end{proof}

\subsection{Sections from $\MgP$}

Let $\un d=(d_1,\dots,d_{|A|})$ be a tuple of numbers summing up to $d$ and consider the Abel-Jacobi map
\begin{align*}
\F_{\un d}&:\;\;\;\mathcal M_{g,A}\;\;\;\;\longrightarrow \;\;\;\;\mathcal J_{g,A}\\
&{(\xymatrix{
X\ar[r]_{f} & T \ar @/_/[l]_{\sigma_i}
})}\mapsto (\xymatrix{
X\ar[r]_{f} & T \ar @/_/[l]_{\sigma_i}
}, \mathcal O_X(\sum_{i\in A}d_i\sigma_i)).
\end{align*}

Motivated by a question posed by Eliashberg (see \ref{S:Eliashberg} below), R. Hain has asked in \cite{hain} if one could extend this map over all $\MgP$ with target a suitable universal compactified Jacobian.  We will give a positive answer to this question in what follows.

Consider the vector bundle
$$\E=\omega^{-1}(\sum_{i=1}^{|A|}2d_i \sigma_i)\oplus \O,$$
which yields a $d$-polarisation on $\MgP$ since $\deg \E=-(2g-2)+\sum_{i}2d_i=2(d-g+1)$.
Then there is a morphism
$$\F_{\un d}:\MgP\to \JgP$$
given by assigning to the section $(f:X\to T, \sigma_1,\dots, \sigma_{|A|})\in \MgP$ the same family of pointed curves endowed with the line bundle $\I=\O_X(\sum_{i\in A}d_i\sigma_i)$.
Then $\I\in\JgP^{\E,s}$ since $\deg(\I)=d$ and for all $t\in T$ and $Y\subseteq X(t)$ with $(\sigma_1(t),\dots,\sigma_{|A|}(t))=(p_1,\dots,p_{|A|})$, we have that
\begin{align*}
q^E_Y&=\frac{\deg_YE}2+\frac{w_Y}{2}\\
&=\frac{-2\omega_Y+\sum_{p_i\in Y}2d_i}{2}+\frac{w_Y}{2}\\
&=\sum_{p_i\in Y}d_i
\end{align*}
which yields that
$$\deg_Y(\I)=\sum_{p_i\in Y}d_i>\sum_{p_i\in Y}d_i -\frac{k_Y}2= q^E_Y-\frac{k_Y}2,$$
i.e., $\I$ is $\E$-stable.

Summing up we have:
\begin{prop}
Let $\un d=(d_1,\dots,d_{|A|})$ be a tuple of numbers summing up to $d$ and $\E$ the $d$-polarisation on $\MgP$ given by $\E=\omega^{-1}(\sum_{i\in A}2d_i \sigma_i)\oplus \O$.
Then there is a section
$$\F_{\un d}:\MgP\to \JgP^{\E,s}$$
given by sending a section $(f:X\to T, \sigma_1,\dots, \sigma_{|A|})\in \MgP(T)$ to $(f:X\to T; \sigma_1,\dots, \sigma_{|A|};\\
 \O_X(\sum_{i\in A}d_i\sigma_i))$.
\end{prop}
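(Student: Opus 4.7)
The proposal is essentially to unpack the computation sketched in the paragraph preceding the statement and to check that it indeed defines a morphism of stacks. First I would verify that $\E = \omega^{-1}(\sum_{i\in A} 2d_i\sigma_i)\oplus \O$ is a $d$-polarisation on $\MgP$ in the sense of Definition \ref{D:d-pol}: the rank is $r=2$, and the relative degree on any fibre is $-(2g-2) + \sum_{i\in A}2d_i = 2(d-g+1)$, matching $r(d-g+1)$. Next I would define the assignment on objects by
\[
\F_{\un d}\bigl(f\colon X\to T;\sigma_1,\dots,\sigma_{|A|}\bigr) = \bigl(f\colon X\to T;\sigma_1,\dots,\sigma_{|A|};\,\O_X(\textstyle\sum_{i\in A} d_i\sigma_i)\bigr),
\]
and on morphisms by the obvious pullback of line bundles; since $\O_X(\sum d_i\sigma_i)$ is manifestly compatible with base change along cartesian diagrams of pointed families, functoriality is immediate, and the twist $f^\ast(M)$ in the definition of morphisms in $\JMP$ accommodates the ambiguity in the choice of local trivialisations. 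In particular, $\I:=\O_X(\sum d_i\sigma_i)$ is a $T$-flat line bundle, hence a simple rank-1 torsion-free sheaf of relative degree $d$, so $\F_{\un d}$ factors through $\JgP^d$.

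The remaining content is the pointwise stability check, which I would carry out over a geometric point $t\in T$ with fibre $X=X(t)$, markings $p_i=\sigma_i(t)$, and $\I_t=\O_X(\sum d_i p_i)$. For any proper non-empty subcurve $Y\subsetneq X$, using $\deg_Y\omega^{-1}=-w_Y$ and $\deg_Y\O=0$ together with the formula $q^E_Y = \frac{\deg E_{|Y}}{r}+\frac{w_Y}{2}$, one computes
\[
q^E_Y = \frac{-w_Y + 2\sum_{p_i\in Y} d_i}{2} + \frac{w_Y}{2} = \sum_{p_i\in Y} d_i,
\]
while $\deg_Y\I_t = \sum_{p_i\in Y} d_i$ since the $p_i$ are smooth points and each $\sigma_i$ meets $Y$ in $p_i$ iff $p_i\in Y$. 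The required strict inequality in Definition \ref{semiquasistabledef}(ii) then reads
\[
\sum_{p_i\in Y} d_i \;>\; \sum_{p_i\in Y} d_i - \frac{k_Y}{2},
\]
which holds because $k_Y\geq 1$ for any proper non-empty subcurve of the connected curve $X$. This shows $\I_t$ is $E$-stable on every geometric fibre, hence $\I$ is relatively $\E$-stable, and $\F_{\un d}$ lands in $\JgP^{\E,s}$.

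No serious obstacle is expected: the computation is essentially tautological once the polarisation is set up so that the dualising contribution cancels $w_Y/2$ and the marked-point contribution matches $\deg_Y\I$ exactly. The only point requiring a brief word is the strict positivity of $k_Y$, which uses the standing assumption that $X$ is connected and $Y$ is proper and non-trivial, so that $Y$ meets $Y^c$ in at least one node; this is what upgrades semistability to stability and indeed shows that $\I$ is automatically $(\E,\sigma_i)$-quasistable for any of the tautological sections $\sigma_i$.
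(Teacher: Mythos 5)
Your proposal is correct and follows essentially the same route as the paper: verify that $\E$ has rank $2$ and relative degree $2(d-g+1)$, note that $\O_X(\sum_{i\in A}d_i\sigma_i)$ is a $T$-flat line bundle compatible with base change, and compute $q^E_Y=\sum_{p_i\in Y}d_i=\deg_Y\I_t$ so that the strict inequality follows from $k_Y\geq 1$. Your write-up is in fact slightly more careful than the paper's on the functoriality of the assignment and on why $k_Y>0$, but the mathematical content is identical.
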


We will call the above morphism $\F_{\un d}$ an \textit{Abel-Jacobi map}.

\subsection{Applications and future work}\label{S:applications}

The compactified universal Jacobian stacks we have constructed satisfy, as we saw, a number of nice properties as the existence of clutching morphisms, forgetful and section morphisms from $\MgP$, the existence of a projective coarse moduli space and many others.
Therefore, there is a number of questions that appear naturally connected to different possible applications of these constructions as we indicate here below.
For instance, given the nice modular description of our compactifications, it is natural to study their intersection theoretical properties, and use their forgetful and section morphisms to $\MgP$ to describe certain tautological classes in $\MgP$. This is the idea that stands behind the following question.

\subsubsection{The Eliashberg problem}\label{S:Eliashberg}
Assume now that $d=0$. Then the universal Jacobian stack $\Jac^0$ is a dense open substack of $\JgP^{\E,s}$ and the restriction of above morphism
$\F_{\un d}:\MgP\to \JgP^{\E,s}$ to $\mathcal M_{g,A}$, which we will also denote with $\F_{\un d}$, lands in $\Jac^0$.
Consider the locus $R_{\un d}$ of curves $(X;p_i\in A)\in \mathcal M_{(g,A)}$ such that $\sum_{p_i\in A}d_i p_i$ is a principal divisor on $X$. This locus can be seen as the locus of curves admitting a map to $\mathbb P^1$ with prescribed preimages and ramification over two points, say $0$ and $\infty$, so it is a natural ``double Hurwitz'' locus which is known as the \textit{double ramification locus}. Consider the following well-known question, which is due to Eliashberg:
\begin{question}
What is the class of the closure $\ov R_{\un d}$ of $R_{\un d}$ in $\MgP$?
\end{question}

Let $Z_{g,A}$ be the zero section of $\Jac$. Then the divisor $\sum_{p_i\in A}d_i p_i$ is principal if and only if its image under $\F_{\un d}$ lands in $Z_{g,A}$.
Let $\ov Z_{g,A}$ be the closure in $\JgP^{\E,s}$ of the zero section $Z_{g,A}$ of $\Jac$. Then the class of an extension of $R_{\un d}$ in $\MgP$ can be computed by pulling back $\ov Z_{g,A}$ under $\F_{\un d}$. This computation has been done by Hain in \cite{hain} over the locus $\mathcal M_{g,A}^{ct}$ of curves of compact type using the fact that the Torelli morphism extends over that locus. More recently, this computation was further extended by Grushevsky and Zakharov in \cite{gru} for the locus $\ov{\mathcal M}_{g,A}^o$ of curves whose normalization has genus at least $g-1$ and using the fact that the Torelli morphism extends over that locus to a partial compactification of the moduli space of principally polarized abelian varieties $\mathcal A_g$. It is natural to ask the following
\begin{question}
Is it possible to compute the class of the closure of the zero section $\ov Z_{g,A}$ on $\JgP^{\E,s}$ and then pull it back to $\MgP$ via $\mathcal F_{\un d}$ to compute the class of  $\ov R_{\un d}$?
\end{question}

In fact, in \cite{dudin}, B. Dudin used the existence and properties of our universal compactified Jacobians $\JgP$ in order to give a partial answer to the above question. More precisely, he considered a polarisation $\E$ such that the zero section of $\mathcal J_{g,A}^0$ extends over the whole $\JgP^{\E,ss}$ and he constructed an extension of $F_{\un d}$ over a locus $\MgP^\tau$ on $\MgP$
containing the locus of tree-like curves. Then he computed the class of the zero section on $\JgP^{\E,s}$ and its pullback to $\MgP^\tau$, extending further the works of Hain and of Grushevsky-Zhakarov.
We hope that by slightly modifying this argument we can actually get to a complete answer to the Eliashberg question.

A different approach to compute a (possibly different) completion of $R_{\un d}$ was described in work of Cavalieri, Marcus and Wise in \cite{cavalieri} and consists of computing the pushforward of the virtual fundamental class of the moduli space of relative stable maps to a rubber $\mathbb P^1$ by the forgetful map to $\MgP$. A. Pixton conjectured a formula for this class and this conjecture was recently proved to be true by work of Janda, Pandharipande, Pixton and Zvonkine in \cite{DRC}.
Both this and the ``Jacobian type'' completion described by Hain coincide over the locus of curves of compact type as shown by Marcus and Wise in \cite{MW}. We actually believe the same proof as in loc. cit. suffices to show that both completions coincide over the locus of tree-like curves. It would certainly be very interesting to understand the relation between the two in a wider locus of $\MgP$; we intend to go back to this problem in a near future.
For an account on the different perspectives  on the subject (including Pixton's conjecture) see \cite{cav}.

\subsubsection{Generalized ELSV formulas}\label{ELSV}

The so-called ELSV-formula establishes a surprising relation between certain intersection numbers on the moduli space of stable marked cur\-ves with ``one part double Hurwitz numbers'', which count the number of coverings of the projective line $\mathbb P^1$ by curves of genus $g$ with assigned ramification over $\infty$ and simple ramification in the remaining ramification points (see \cite{ELSV1}).
It is conjectured by Goulden, Jackson and Vakil (see \cite{GJV}) that there should exist a compactification of the universal Picard stack over the moduli space of stable curves with marked points where the intersection theory is strictly related with the geometry of ``one part double Hurwitz numbers'': the number of coverings of $\mathbb P^1$ having prescribed ramification over $\infty$, total ramification over $0$ and simple ramification in the remaining ramification points.
 More
precisely, denote by $d$ the degree of the cover, by  $\beta=(\beta_1,\dots,\beta_n)$ the ramification type over infinity
and by $g$ the genus of the curve. Then the covers of $\mathbb P^1$ with these numerical invariants should be
proportional to
 $$\int_{\overline{\mathcal J}_{g,n}}\frac{\Lambda_1+\dots\pm \Lambda_{2g}}{(1-\beta_1\psi_1)+\dots+(1-\beta_n\psi_n)}$$
 where the classes $\psi_i$ should be a natural generalization of the classes $\psi_n$ on $\mgnb$ and the $\Lambda_k$'s
should be suitable Chow classes of codimension $2k$ associated to a certain autodual rank $2g$ vector
bundle. If this formula would be true, it would give a natural and beautiful generalization of
the ELSV-formula, so it is natural to try to investigate if one can give an answer to this conjecture. This would pass by studying intersection theory in our compactified universal Jacobian stacks and then by applying a virtual localization argument following the lines of Graber and Vakil's proof of the ELSV formula in \cite{graber}.

Double Hurwitz numbers in general (rather than just one part double Hurwitz numbers) were shown in \cite{GJV} to be piecewise polynomial. Later,  in \cite{CJM1} and \cite{CJM2}, Cavalieri, Markwig and Johnson have studied their chamber structure along with wall crossing formulas by relating them to certain tropical objects.
In the light of Goulden Jackson and Vakil's conjecture, it would be interesting to investigate if by varying the polarisation we can describe the chamber structure and wall crossing formulas for our universal compactified Jacobians and establish an analogy with the phenomena described by Cavalieri, Markwig and Johnson for double Hurwitz numbers.

Recall that in \cite{KP}, the authors have described a compactification of the universal Jacobian of degree $g-1$ over the locus of marked tree-like curves depending upon a stability parameter along with a wall crossing formula for the theta divisor as the stability parameter varies, wich is suggestive of the wall-crossing phenomenon discussed in the previous paragraph. As we have shown in Proposition \ref{P:compare}, all the compactifications in loc. cit. can be realised by restricting our compactified universal Jacobian for some polarisation to the locus of tree-like curves. It is therefore expected that we can obtain similar wall crossing formulas for other Chow classes in our compactifications as the polarisation varies over the whole $\MgP$.

\subsubsection{Gromov-Witten invariants for $\rm B\mathbb G_m$}\label{GW}

Algebro-geometric Gromov-Witten invariants for a smooth projective variety $X$ are defined using the Kontsevich moduli stack $\overline{\mathcal M}_{g,n}(X,\beta)$ parametrizing $n$-pointed stable maps from curves of genus $g$ to $X$ with image class $\beta\in H_2(X,\mathbb Z)$. This space is endowed with evaluation maps $\rm{ev}_i$ with target on $X$ and with a virtual fundamental class.
The invariants are then defined by capping classes with the virtual fundamental class and then by pushing forward via the evaluation maps.

Gromov-Witten invariants with target on orbifolds were defined by Chen and Ruan in \cite{CR} and Abramovich, Graber and Vistoli in \cite{AGV}. There are two main differences in this situation: the first one is that the source curves are replaced with twisted curves to maintain the properness of moduli and the second one is that the evaluation maps take values on the so-called \textit{rigidified cyclotomic inertia stack} of the target orbifold.

In \cite{FTT}, Frenkel, Teleman and Tolland announced a construction of gauge theoretical Gromov-Witten invariants for $[\rm pt/\mathbb C^*]$. For that purpose they construct a modular completion of the moduli stack of stable maps from marked curves to $[\rm pt/\mathbb C^*]$ consisting of bubbled marked stable curves together with a $\mathbb C^*$-bundle. This stack is endowed with evaluation maps to $[\rm pt/\mathbb C^*]$ and with a forgetful morphism $\Pi$ onto the moduli stack of stable marked curves but it is far from being proper. The invariants constructed in loc. cit. are, as in the classical case, obtained by pushing a product of evaluation classes forward along the forgetful morphism $\Pi$, but since the fibers of this map are Artin stacks, the invariants are defined in K-theory rather than in cohomology.

An algebro-geometric construction of Gromov-Witten invariants for Artin stacks is, at least to our knowledge, still missing, and the first case to consider is naturally the case of $\rm B\mathbb G_m$.
The universal compactified Jacobian stacks we construct are smooth and proper Deligne-Mumford stacks which can be seen as a modular completion of the moduli stack parametrizing stable marked curves together with a line bundle, i.e., a map to $\rm B\mathbb G_m$. Fix then a polarisation $\E$ and a section $\sigma$ of $\pi_x:\MgPx\to\MgP$ and set $\overline \M:=\JgP^{\E,\sigma}$. Then $\overline\M$ is endowed with $n$ evaluation maps $\rm ev_j: \overline \M\to \rm B\mathbb G_m$ defined by
assigning to a section $(\xymatrix{
X\ar[r]_{f} & T \ar @/_/[l]_{\sigma_i}
},I)$ of $\ov\M(T)$, the section of $\rm B\mathbb G_m(T)$ given by $\sigma_j^*(I)$ (notice that this is a principal $\mathbb G_m$-bundle over $T$ because the image of the sections $\sigma_i$ is contained in the smooth locus of $X$).
Now, as we already observed, the stacks $\ov\M$ are smooth and proper, so there is no need for establishing the existence of a virtual fundamental class. In order to define the invariants one should now consider the pushforward of classes on $\ov\M$ onto $\rm B\mathbb G_m$.
We hope to come back to this in a near future.




\begin{thebibliography}{FGvS99}

\bibitem[AGV02]{AGV} D. Abramovich, T. Graber, A. Vistoli: {\em Gromov-Witten theory of Deligne-Mumford stacks.} Amer. J. Math. 130 (2008), no. 5, 1337--1398.

\bibitem[AV02]{AV} D. Abramovich, A. Vistoli: {\em Compactifying the space of stable maps.} J. Amer. Math. Soc. {\bf 15} (2002), no. 1, 27--75.

\bibitem[ASvdW]{alperweakly} J. Alper, D. I. Smyth, F. van der Wyck: {\it Weakly proper moduli stacks of curves}. Preprint arXiv:1012.0538.

\bibitem[AK80]{AK} A. Altman, S. Kleiman: {\it Compactifying the Picard scheme}. Adv. Math.  35 (1980), 50--112.

\bibitem[AC87]{arbcorn} E. Arbarello, M. Cornalba: {\it The Picard groups of the moduli spaces of curves}. Topology {\bf 26} (1987), no. 2, 153--171.

\bibitem[AC09]{arbcorn2} E. Arbarello, M. Cornalba: {\it Divisors in the moduli spaces of curves}. Surveys in differential geometry. Vol. XIV. Geometry of Riemann surfaces and their moduli spaces, 1--22, Surv. Differ. Geom., {\bf 14}, Int. Press, Somerville, MA, 2009.

\bibitem[ACG11]{ACG} E. Arbarello, M. Cornalba, P.A. Griffiths: Geometry of algebraic curves. Vol. II. With a contribution by J. Harris.
Grundlehren der Mathematischen Wissenschaften 268, Springer-Verlag (2011).


\bibitem[CJM10]{CJM1} R. Cavalieri, P. Johnson,  H. Markwig: {\em Tropical Hurwitz numbers.} J. Algebraic Combin., 32(2):241--265, 2010.

\bibitem[CJM11]{CJM2} R. Cavalieri, P. Johnson,  H. Markwig: {\em Wall  crossings for double Hurwitz numbers.} Adv. Math., 228(4):1894--1937, 2011.

\bibitem[Cav16]{cav} R. Cavalieri: {\em Hurwitz theory and the double ramification cycle.} Jpn. J. Math. 11 (2016), no. 2, 305--331.

\bibitem[CMW12]{cavalieri} R. Cavalieri, S. Marcus, J. Wise: {\em Polynomial families of tautological classes on $M_{g,n}^{rt}$}. J. Pure Appl. Algebra, 216(4): 950--981, 2012.

\bibitem[C94]{cap} L. Caporaso: {\it A compactification of the universal Picard variety over the moduli space of stable curves}. J. Amer. Math. Soc. {\bf 7} (1994), no. 3, 589--660.

\bibitem[CR02]{CR}  W. Chen, Y. Ruan: {\em Orbifold  Gromov-Witten  theory}, Orbifolds in
mathematics and physics (Madison, WI, 2001), 25-85, Contemp. Math., 310, Amer. Math. Soc., Providence, RI, 2002.

\bibitem[Chi]{chiodo} A. Chiodo: {\it N\'eron models of $Pic^0$ via $Pic^0$.} Preprint arXiv:1509.06483.

\bibitem[CP10]{CP} J. Coelho, M. Pacini: {\it Abel maps for curves of compact type}. J. Pure Appl. Algebra {\bf 214}, No. 8, 1319-1333 (2010).

\bibitem[Dud]{dudin} B. Dudin: {\em Compactified universal jacobian and the double ramification cycle}. Preprint arXiv:1505.02897.

\bibitem[ELSV01]{ELSV1} T. Ekedahl, S. Lando, M. Shapiro, A. Vainshtein: {\em Hurwitz numbers and intersections
on moduli spaces of curves.} Invent. Math. 146 (2001), 297--327.

\bibitem[Est01]{est1} E. Esteves: {\it Compactifying the relative Jacobian over families of
reduced curves.} Trans. Amer. Math. Soc. {\bf 353} (2001), 3045--3095.


\bibitem[EP16]{EP} E. Esteves, M. Pacini: {\em Semistable modifications of families of curves and
compactified Jacobians}. Ark. Mat. 54 (2016), no. 1, 55--83.

\bibitem[FGvS99]{FGS} B. Fantechi, L. G\"ottsche, D. van Straten: {\em Euler number of the compactified Jacobian and multiplicity of rational curves.} J. Algebraic Geom. {\bf 8} (1999), no. 1, 115--133.

\bibitem[FTT16]{FTT} E. Frenkel, C. Teleman, A. J. Tolland: {\em Gromov-Witten Gauge Theory}. Adv. Math. 288 (2016), 201--239.

\bibitem[GJV05]{GJV} I. P. Goulden, D. M. Jackson and R. Vakil: {\em Towards the
geometry of double Hurwitz numbers.} Adv. Math. 198 (2005), no. 1, 43--92.

\bibitem[GV03]{graber} T. Graber, R. Vakil: {\em Hodge integrals and Hurwitz numbers via virtual localization.} Comp.
Math. 135, No. 1, 25-36 (2003).

\bibitem[GZ14]{gru} S. Grushevsky, D. Zakharov: {\it The zero section of the universal semiabelian variety, and the double ramification cycle}. Duke Math. J. 163 (2014), no. 5, 953-982.

\bibitem[Hall]{hall} J. Hall: {\em Moduli of Singular Curves}. Preprint arXiv:1011.6007.

\bibitem[H13]{hain} R. Hain: {\em Normal Functions and the Geometry of Moduli Spaces of Curves,} in {\em Handbook of Moduli}, edited by Gavril Farkas, Ian Morrison, vol. I (March, 2013), pp. 527-578, International Press.

\bibitem[Har83]{harer} J. Harer: {\it The second homology group of the mapping class group of an orientable surface}.
Invent. Math. 72 (1983), no. 2, 221--239.

\bibitem[H10]{heinloth} J. Heinloth: {\em Lectures on the moduli stack of vector bundles on a curve}. Affine flag manifolds and principal bundles. Basel: Birkh\"auser. Trends in Mathematics, 123-153 (2010).

\bibitem[Hol]{holmes}  D. Holmes: {\it A N\'eron model of the universal jacobian.} Preprint arXiv:1412.2243.

\bibitem[JPPZ]{DRC} F. Janda, R. Pandharipande, A. Pixton, D. Zvonkine: {\it Double ramification cycles on the moduli spaces of curves}. Preprint arXiv:1602.04705.

\bibitem[KP]{KP} J. L. Kass, N. Pagani: {\em Extensions of the universal theta divisor}. arXiv:1507.03564.

\bibitem[KM97]{KM} S. Keel and S. Mori: {\em Quotients by groupoids}. Ann. of Math. (2) 145 (1997), no. 1, 193--213.

\bibitem[Knu83]{knudsen} F. Knudsen: {\it The projectivity of the moduli space of stable curves. II. The stacks $M\sb{g,n}$.} Math. Scand. {\bf 52}  (1983),  no. 2, 161--199.

\bibitem[Kol90]{kollar} J. Koll\'ar: {\em Projectivity of complete moduli.} J. Differential Geom. 32 (1990), no. 1, 235--268.

\bibitem[K09]{kreschGDM} A. Kresch: {\em On the geometry of Deligne-Mumford stacks}. Algebraic geometry, Seattle 2005. Part 1, 259--271,
Proc. Sympos. Pure Math., 80, Part 1, Amer. Math. Soc., Providence, RI, 2009.

\bibitem[LW15]{li} J. Li, X. Wang: {\it Hilbert-Mumford criterion for nodal curves}. Compos. Math. 151 (2015), no. 11, 2076--2130.



\bibitem[MW]{MW} S. Marcus, J. Wise {\it Stable maps to rational curves and the relative Jacobian}. Preprint arXiv:1310.5981.

\bibitem[Mel11]{melo} M. Melo: {\it Compactified Picard stacks over the moduli stack of stable curves with marked points}. Adv. Math. {\bf 226}  (2011), 727--763.

\bibitem[Mel]{meloneron} M. Melo: {\it Universal N\'eron models for curves with marked points}. Preprint arXiv:1602.05621.

\bibitem[MRV1]{MRV1} M. Melo, A. Rapagnetta, F. Viviani: {\em Fine compactified Jacobians of reduced curves.} To appear in Trans. Amer. Math. Soc. Preprint arXiv:1406.2299.

\bibitem[MRV2]{MRV} M. Melo, A. Rapagnetta, F. Viviani: {\em Fourier-Mukai and autoduality for compactified Jacobians. I.} Preprint arXiv:1207.7233.

\bibitem[MV12]{MV} M. Melo, F. Viviani: {\em Fine compactified Jacobians}. Math. Nach. 285 (2012), no. 8-9, 997--1031.

\bibitem[MV14]{MVPic} M. Melo, F. Viviani: {\em The Picard group of the compactified universal Jacobian}.  Documenta Mathematica 19 (2014), 457-507.

\bibitem[S94]{simpson} C. T. Simpson: {\em Moduli of representations of the fundamental group of a smooth projective variety. I},  Inst. Hautes \'Etudes Sci. Publ. Math. (1994), no. 79, 47--129.





\end{thebibliography}
\end{document}